\newtheorem{theorem}{Theorem}
\newtheorem{lemma}{Lemma}
\newtheorem{proposition}{Proposition}
\newtheorem{definition}[theorem]{Definition}
\newcommand*\samethanks[1][\value{footnote}]{\footnotemark[#1]}
\title{Principal Component Projection with Low-Degree Polynomials}
\author{Stephen D. Farnham \thanks{Department of Mathematics, Syracuse University, Syracuse, NY 13244, USA. Email: \texttt{sdfarnha@syr.edu} and \texttt{lshen03@syr.edu}} \and Lixin Shen \samethanks[1]  \and Bruce W. Suter \thanks{Air Force Research Laboratory, Rome, NY. Email: \texttt{bruce.suter@us.af.mil.}}}
\begin{document}
\maketitle

\begin{abstract}
In this paper, we consider approximations of principal component projection (PCP) without explicitly computing principal components. This problem has been studied in several recent works. The main feature of existing approaches is viewing  the PCP matrix as a matrix function. This underlying function is the composition of a step function with a rational function. To find an approximate PCP, the step function is approximated by a polynomial while the rational  function is evaluated by a fast ridge regression solver. In this work, we further improve this process by replacing the rational function with carefully constructed polynomials of low degree. We characterize the properties of polynomials that are suitable for approximating PCP, and establish an optimization problem to select the optimal one from those polynomials. We show theoretically and confirm numerically that the resulting approximate PCP approach with optimal polynomials is indeed effective for approximations of principal component projection.
\end{abstract}

\section{Introduction}
Principal component projection (PCP), an essential and useful tool in machine learning and statistics, is a mathematical procedure that projects high dimensional data onto a lower dimensional subspace. This subspace is defined by the principal components with the highest variance in the training data.  Mathematically, PCP can be achieved by  finding the top principal components of a matrix, through any principal component analysis (PCA) solver, and then projecting the underlying vector onto their span. Unfortunately, computing the principal components of a matrix is an expensive task, naturally raising the question of how one can efficiently project a vector onto the span of the top principal components of a matrix without performing PCA.

This question has recently been addressed in \cite{Allen-Zhu-Li:ICML:17,Frostig-Musco-Musco-Sidford:ICML:16} through an iterative algorithm based on black-box calls to a ridge regression routine. We briefly review the main idea behind this algorithm. Given a general matrix $A \in \mathbb{R}^{m \times n}$ and a vector $y \in \mathbb{R}^n$, we want to compute the projection of $y$ onto the span of the eigenvectors of $A^\top A$ corresponding to eigenvalues above a threshold $\lambda$. We denote this projection, i.e., the principal component projection of $y$, $P_{(A,\lambda)} y$. The key observation in \cite{Frostig-Musco-Musco-Sidford:ICML:16} is that the principal component projection operator $P_{(A,\lambda)}$ can be viewed as a matrix function in the following form
\begin{equation}\label{eq:key1}
P_{(A,\lambda)} = s_{\frac{1}{2}}(r_\lambda(A^\top A)),
\end{equation}
where
\begin{equation}\label{def:rlambda-shalf}
r_\lambda(x)=\frac{x}{x+\lambda} \quad \mbox{for $x\ge 0$} \quad \mbox{and} \quad
s_{\frac{1}{2}}(x)=\left\{
                  \begin{array}{ll}
                    0, & \hbox{$x <\frac{1}{2}$;} \\
                    1, & \hbox{$x \ge \frac{1}{2}$,}
                  \end{array}
                \right.
\end{equation}
are called the ridge regression function and the step function, respectively.  The observation~\eqref{eq:key1} provides a way for finding an approximation to $P_{(A,\lambda)} y$  through an approximation to $s_{\frac{1}{2}}(r_\lambda(A^\top A))y$. In \cite{Allen-Zhu-Li:ICML:17}, the step function $s_{\frac{1}{2}}$ is approximated by a degree $n$ polynomial, denoted $g_n$, which is obtained by approximating the signum function via a Chebyshev polynomial. As a result,  computing an approximation to $P_{(A,\lambda)} y$ amounts to evaluating $g_n(r_\lambda(A^\top A))y$.  Numerical experiments in \cite{Allen-Zhu-Li:ICML:17,Frostig-Musco-Musco-Sidford:ICML:16} demonstrated that $g_n(r_\lambda(A^\top A))y$ provides an approximation to $P_{(A,\lambda)} y$ for any vector $y$ in a stable and efficient manner.

Our goal in this paper is to develop a more efficient way of approximating the principal component projection than the approaches in \cite{Allen-Zhu-Li:ICML:17,Frostig-Musco-Musco-Sidford:ICML:16} in terms of computational efficiency. To this end, we take a closer look at the ridge regression function $r_\lambda$, which determines the computational cost of approximating the principal component projection $P_{(A,\lambda)} y$ via $g_n(r_\lambda(A^\top A))y$  under a given error tolerance. Firstly, the function $r_\lambda$ sends the eigenvalues of $A^\top A$, above and below the threshold $\lambda$,  to that of $r_\lambda(A^\top A)$, above and below $1/2$, respectively. Simultaneously,  all eigenvalues of $r_\lambda(A^\top A)$ lie in the interval $[0, 1]$. Secondly, the steepness of $r_\lambda$ around $\lambda$ determines the degree of the polynomial $g_n$ (a detailed description about this statement will be given in Section 2). Roughly speaking, the greater the steepness of $r_\lambda$ around $\lambda$ is, the lesser the degree of $g_n$ is required to be. Lastly, once the polynomial $g_n$ is chosen, the basic operation in evaluating $g_n(r_\lambda(A^\top A))y$ is the computation of $r_\lambda(A^\top A)v$ for any vector $v$. Since $r_\lambda(A^\top A)v = (A^\top A+\lambda I)^{-1} A^\top A v$, it can be approximated efficiently by using any fast ridge regression solver. With the above discussion, and to have a more efficient way of approximating PCP along the line in \cite{Allen-Zhu-Li:ICML:17,Frostig-Musco-Musco-Sidford:ICML:16}, one should consider replacing $r_\lambda$ by an alternative function, say $r$, which not only possesses the above properties, but also is steeper than $r_\lambda$ around $\lambda$, and is simply enough for $r(A^\top A) v$ to be computed explicitly and accurately with fewer computations than $r_\lambda(A^\top A)v$.  In this paper, we show theoretically that a polynomial with low degree can serve our goal very well.

Our main contributions are highlighted as follows:
\begin{itemize}
\item We introduce the concepts of $(\lambda,\gamma)$-admissible polynomials and a $(\lambda,\gamma)$-admissible gap associated with these polynomials, where $\lambda \in (0,1)$, $\gamma \in (0,1)$, and $\lambda(1+\gamma)<1$. A $(\lambda,\gamma)$-admissible polynomial will map $[0, (1-\gamma)\lambda]$ to $[-1, 0)$ and $[(1+\gamma)\lambda, 1]$ to $(0, 1]$. The optimal $(\lambda,\gamma)$-admissible polynomial in terms of its steepness around $\lambda$ is defined.

\item We show that an optimal $(\lambda,\gamma)$-admissible polynomial can be derived from an optimal $(1-\lambda,\gamma')$-admissible polynomial with a proper choice of $\gamma'$. We  present the explicit forms of the optimal $(\lambda,\gamma)$-admissible polynomials with degree $1$ and $2$ through an optimization procedure.

\item We show theoretically and confirm numerically which one, among the ridge regression function and  the optimal $(\lambda,\gamma)$-admissible polynomials with degree $1$ and $2$, should be chosen in approximating PCP of vectors for a given threshold $\lambda$.
\end{itemize}

This paper is organized as follows. In the next section, we make necessary preparations by recalling the concept of $(\gamma, \epsilon)$-approximate PCP and recalling the \texttt{QuickPCP} algorithm for  $(\gamma, \epsilon)$-approximate PCP. This review motivates us to propose a $(\gamma, \epsilon)$-approximate PCP algorithm using low-degree polynomials. In Section~\ref{sec:Admissible-Ploy}, we introduce a class of admissible polynomials for PCP. We pose a general optimization problem for finding the optimal admissible polynomial in the sense of maximizing the admissible gap of admissible polynomials. In particular, the optimal admissible polynomial of degree 1 is explicitly given. Section~\ref{sec:degree-2} gives a complete study about the optimal admissible polynomial of degree 2. The concrete form of the  optimal admissible polynomial depends on the parameters $\gamma$ and $\lambda$. Section~\ref{sec:RR-Poly} is devoted to answering the question of under what scenarios the optimal $(\lambda,\gamma)$-admissible polynomials with degree $1$ and $2$ will outperform  the ridge regression function when they are used to compute $(\gamma, \epsilon)$-approximate PCP. In Section~\ref{sec:experiments}, we confirm numerically the theoretical results given in the previous sections. We draw our conclusions in Section~\ref{sec:conclusion}.


\section{Preliminaries}\label{sec:prel}
This section will provide notation used throughout the paper along with definitions and some key concepts.

We denote by $\mathbbm{1}: E \rightarrow \{0,1\}$ the indicator function for event $E$, that if the event $E$ holds, then $\mathbbm{1}(E)$ equals $1$; otherwise $0$.  We denote by $\|v\|$ the Euclidean norm of a vector $v$, and by $\|A\|$ the spectral norm of a matrix $A$.

For any matrix $A$ of size $m \times d$ with rank $r$, the singular value decomposition (SVD) of $A$ has a form of $A=U\Sigma V^\top$, where $U\in \mathbb{R}^{m \times r}$ and $V \in \mathbb{R}^{n \times r}$ both have orthonormal columns and $\Sigma \in \mathbb{R}^{r \times r}$ is a diagonal matrix. Moreover, $\Sigma = \mathrm{diag}(\sigma_1, \sigma_2, \ldots, \sigma_r)$, where $\sigma_1 \ge \sigma_2 \ge \cdots \ge \sigma_r \ge 0$ are the singular values of $A$ in decreasing order. For this matrix $A$ and a positive number $\lambda \in (0, \sigma_1^2]$, the projection matrix associated with $A$ at threshold $\lambda$, denoted by $P_{(A,\lambda)}$, is given as follows
$$
P_{(A,\lambda)} = \sum_{j\in \{i: \sigma_i^2 \ge \lambda\}} v_j v^\top_j,
$$
which is another way of viewing \eqref{eq:key1}.  Correspondingly, for any vector $\chi \in \mathbb{R}^d$, we call the vector $P_{(A,\lambda)} \chi$ the principal component projection (PCP) of $\chi$ at threshold $\lambda$.

Let us recall the definition of matrix functions. Let $f$ be a real function defined on $\mathbb{R}$. If $D=\mathrm{diag}(d_1, d_2, \ldots, d_r)$ is a diagonal matrix with diagonal entries $d_i$, we denote $f(D)=\mathrm{diag}(f(d_1), f(d_2), \ldots, f(d_r))$. If $B$ is a real symmetric matrix, we define $f(B) = U f(\Sigma) U^\top$, where $B=U\Sigma U^\top$ is the singular value decomposition (SVD) of $B$.

\subsection{The $(\gamma, \epsilon)$-approximate PCP}

Our goal is to efficiently compute the principal component projection of a vector. Intuitively, for any given matrix $A$, threshold $\lambda$, and vector $\chi$, the vector $P_{(A,\lambda)} \chi$ can be obtained by first computing the SVD of $A$, then choosing the right singular vectors (i.e., principal components) whose corresponding singular values are greater than $\sqrt{\lambda}$ followed by projecting the vector $\chi$ onto the subspace spanned by those selected singular vectors. Unfortunately, the previously outlined approach is usually computationally intractable, especially for large scale matrices. Therefore, one resorts to finding an approximation to $P_{(A,\lambda)} \chi$ efficiently within a given tolerance. Mathematically, such a feasible approximation is characterized in the following definition introduced in \cite{Frostig-Musco-Musco-Sidford:ICML:16}.
\begin{definition}\label{def:app-PCP}
For a given matrix $A$ of size $m \times d$ and a threshold $\lambda$, we say $\mathcal{P}_{(A,\lambda)} (\chi)$ is a $(\gamma, \epsilon)$-approximate PCP for $\chi \in \mathbb{R}^d$ at threshold $\lambda$ if
\begin{enumerate}
  \item $\|P_{(A,(1+\gamma)\lambda)} (\mathcal{P}_{(A,\lambda)} (\chi)-\chi) \| \le \epsilon \|\chi\|$;
  \item $\|(I-P_{(A,(1-\gamma)\lambda)}) \mathcal{P}_{(A,\lambda)} (\chi)\| \le \epsilon \|\chi\|$;
  \item $|\langle v_i,  \mathcal{P}_{(A,\lambda)} (\chi) -\chi\rangle| \le |\langle v_i, \chi\rangle|+\epsilon \|\chi\|$ hold for all $i$ such that $\sigma_i^2 \in [(1-\gamma)\lambda, (1+\gamma)\lambda]$. Here, $\sigma_i^2$ is the $i$-th singular value of $A^\top A$ and $v_i$ is the corresponding singular eigenvector.
\end{enumerate}
\end{definition}
In the above definition, the first condition implies that the projections of $\mathcal{P}_{(A,\lambda)} (\chi)$ and $\chi$ onto the subspace spanned the principal components with corresponding singular values above the threshold $(1+\gamma)\lambda$ are almost identical; the second condition says that the projection of $\mathcal{P}_{(A,\lambda)} (\chi)$ onto the subspace spanned by the principal components corresponding to singular values below the threshold $(1-\gamma)\lambda$ is close to zero; and the last condition indicates that for each singular value in $[\sqrt{(1-\gamma)\lambda}, \sqrt{(1+\gamma)\lambda}]$, $\langle v_i,  \mathcal{P}_{(A,\lambda)} (\chi)\rangle$ (the projection of $\mathcal{P}_{(A,\lambda)} (\chi)$ onto $v_i$) is between $0$ and $\langle v_i, \chi\rangle$ up to an error $\epsilon \|\chi\|$.


\subsection{The reformulation of the projection matrix $P_{(A,\lambda)}$}
The central question is how to quickly construct $\mathcal{P}_{(A,\lambda)}$ that can yield a $(\gamma, \epsilon)$-approximate PCP for $\chi \in \mathbb{R}^d$ at threshold $\lambda$. This relies on a further understanding of the structure of $P_{(A,\lambda)}$ the projection matrix associated with $A$ at threshold $\lambda$. Using the notion of matrix functions, a key idea introduced in \cite{Frostig-Musco-Musco-Sidford:ICML:16} is to rewrite the projection matrix $P_{(A,\lambda)}$ as follows
\begin{equation*}\label{eq:s-shalf}
P_{(A,\lambda)} = s_{\frac{1}{2}}(r_\lambda(A^\top A)),
\end{equation*}
where both $r_\lambda$ and $s_{\frac{1}{2}}$ are given in \eqref{def:rlambda-shalf}. Clearly, the above equation reveals that $P_{(A,\lambda)}$ can be achieved via two steps: computing $r_\lambda(A^\top A)$ and then $s_{\frac{1}{2}}(r_\lambda(A^\top A))$. Obviously,
$$
r_\lambda(A^\top A) = (A^\top A + \lambda I)^{-1} A^\top A,
$$
which is a smooth approximation to $P_{(A,\lambda)}$ in the sense that both $P_{(A,\lambda)}$  and $r_\lambda(A^\top A)$ have identical principal components and $r_\lambda(\sigma_i^2)$, the eigenvalue of $r_\lambda(A^\top A)$, approaches to $1$ when $\sigma_i^2$, the eigenvalue of $A^\top A$, is much larger than $\lambda$ and $0$, when it is much smaller than $\lambda$. The matrix $s_{\frac{1}{2}}(r_\lambda(A^\top A))$ can be viewed as a sharpened version of $r_\lambda(A^\top A)$ via the step function $s_{\frac{1}{2}}$.

With this decomposition of $P_{(A,\lambda)}$, the previously posed question becomes how to efficiently evaluate  $r_\lambda(A^\top A) \chi$ and $s_{\frac{1}{2}}(r_\lambda(A^\top A))\chi$ for any vector $\chi$. Since the vector $r_\lambda(A^\top A)\chi$ is the unique solution of the convex ridge regression problem
\begin{equation}\label{eq:ridge}
\min \{\|Ax-b\|^2+\lambda \|x\|^2: x \in \mathbb{R}^d\}
\end{equation}
with $b=A \chi$,  many fast solvers are available for solving ridge regression problem \eqref{eq:ridge},  see, for example, \cite{Johnson-Zhang:NISP:2013,Nesterov:83,Shai-Zhang:JMLR:2013}. In particular, a vector denoted by $\mathcal{R}(A, \lambda, A^\top A \chi)$,  produced by a ridge regression algorithm for solving \eqref{eq:ridge}, is an $\epsilon$-approximation to $r_\lambda(A^\top A)\chi$ if for any vector $u\in \mathbb{R}^d$,
$$
\|\mathcal{R}(A, \lambda, u)-(A^\top A + \lambda I)^{-1} u\| \le \epsilon \|u\|
$$
with running time of $O(\mathrm{nnz}(A) \lambda^{-1/2} \log(1/\epsilon)$. Here, $\mathrm{nnz}(A)$ is the number of nonzero entries of $A$.

Next, we review how the vector $s_{\frac{1}{2}}(r_\lambda(A^\top A))\chi$ is approximated in \cite{Allen-Zhu-Li:ICML:17,Frostig-Musco-Musco-Sidford:ICML:16}. Note that
$$
s_{\frac{1}{2}}(r_\lambda(A^\top A))= \frac{1}{2}(I+\mathrm{sgn}(2r_\lambda(A^\top A)-I)),
$$
where $\mathrm{sgn}$ is the signum function with
$$
\mathrm{sgn}(\xi)=\left\{
                    \begin{array}{ll}
                      1, & \hbox{if $\xi \ge 0$;} \\
                      -1, & \hbox{if $\xi<0$.}
                    \end{array}
                  \right.
$$

A polynomial approximation to the matrix sign function in \cite{Allen-Zhu-Li:ICML:17} was obtained through constructing a polynomial $q(x)$ that approximates $(\frac{1+\kappa-x}{2})^{-\frac{1}{2}}$ for some positive number $\kappa$ and then setting $g(x) = x q(1+\kappa-2x^2)$ as an approximation to the signum function $\mathrm{sgn}(x)$. In fact, the polynomial $q$ in \cite{Allen-Zhu-Li:ICML:17} is chosen to be the Chebyshev interpolation polynomial. The degree-$n$ Chebyshev interpolation polynomial for the function  $(\frac{1+\kappa-x}{2})^{-\frac{1}{2}}$  is
\begin{equation*}\label{eq:chenyshev}
q_n(x):=\sum_{k=0}^n c_k \mathcal{T}_k(x),
\end{equation*}
where
$$
c_k = \frac{2-\mathbbm{1}[k=0]}{n+1}\sum_{j=0}^n \sqrt{2} \cos \left(\frac{k(j+1/2)\pi}{n+1}\right) \left(1 + \kappa - \cos \frac{(j+1/2)\pi}{n+1} \right )^{-1/2}.
$$
The Chebyshev polynomials of the first kind $\{\mathcal{T}_k(x)\}_{k\ge 0}$ are defined by the recurrence relation:
$\mathcal{T}_0(x) = 1$, $\mathcal{T}_1(x)= x$, and $\mathcal{T}_{n+1}(x) = 2x \cdot \mathcal{T}_n(x) - \mathcal{T}_{n-1}(x)$.
With the Chebyshev interpolation polynomial, we define
\begin{equation}\label{eq:chenyshev-sign}
g_n(x) := x  q_n(1+\kappa-2x^2).
\end{equation}
The following result in \cite{Allen-Zhu-Li:ICML:17} shows that the polynomial $g_n$ indeed approximates the signum function.
\begin{lemma}[Theorem 5.1 in \cite{Allen-Zhu-Li:ICML:17}]\label{lemma:Allen-Zhu-Chebyshev}
For every $\alpha \in (0,1]$, $\epsilon \in (0,1/2)$, choosing $\kappa=2\alpha^2$, as long as $n \ge \frac{1}{\sqrt{2}\alpha}\log\frac{3}{\epsilon \alpha^2}$, then
\begin{itemize}
  \item[(i)] $|g_n(x)-\mathrm{sgn}(x)|\le \epsilon$ for all $x \in [-1,-\alpha]\cup [\alpha, 1]$;
  \item[(ii)]  $0\le g_n(x)\le 1$ for all $x\in [0,\alpha]$ and $-1\le g_n(x)\le 0$ for all $x\in [-\alpha,0]$.
\end{itemize}
\end{lemma}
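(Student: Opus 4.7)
The plan is to translate the problem of approximating $\mathrm{sgn}$ on $[-1,-\alpha]\cup[\alpha,1]$ into a classical one-dimensional Chebyshev approximation problem on $[-1,1]$, and then invoke standard error bounds for analytic functions.

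\textbf{Step 1 (exact identity on the ``tails'').} First I would make the change of variables $y=1+\kappa-2x^2$ and observe the algebraic identity
\[
\left(\tfrac{1+\kappa-y}{2}\right)^{-1/2}\Big|_{y=1+\kappa-2x^2}=|x|^{-1}.
\]
Thus if $q_n$ were replaced by the exact function $f(y)=\bigl(\tfrac{1+\kappa-y}{2}\bigr)^{-1/2}$, then $x\,f(1+\kappa-2x^2)=x/|x|=\mathrm{sgn}(x)$ for $x\neq 0$. Consequently, for any $x$ with $|x|\in[\alpha,1]$ one has
\[
|g_n(x)-\mathrm{sgn}(x)|=|x|\,|q_n(y)-f(y)|\le |q_n(y)-f(y)|,
\]
and with the choice $\kappa=2\alpha^2$ the image $y=1+\kappa-2x^2$ stays inside $[2\alpha^2-1,1]\subset[-1,1]$. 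So part (i) reduces to bounding the Chebyshev interpolation error $\|q_n-f\|_{L^\infty[-1,1]}$.

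\textbf{Step 2 (Chebyshev error estimate).} The only singularity of $f$ is the branch point at $y=1+\kappa$, lying at distance $\kappa=2\alpha^2$ from $y=1$. I would apply the standard result that the degree-$n$ Chebyshev interpolant of a function analytic inside the Bernstein ellipse $E_\rho$ (foci $\pm 1$, sum of semi-axes equal to $\rho$) incurs error $O(\rho^{-n}\,\|f\|_{L^\infty(E_\rho)})$. The largest admissible $\rho$ satisfies $\tfrac12(\rho+1/\rho)=1+\kappa$, giving
\[
\rho=1+\kappa+\sqrt{2\kappa+\kappa^2}\ge 1+\sqrt{2}\,\alpha,
\]
and a direct computation shows $\|f\|_{L^\infty(E_\rho)}=O(1/\alpha)$. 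Hence $\|q_n-f\|_\infty\lesssim \alpha^{-1}(1+\sqrt{2}\alpha)^{-n}\le \alpha^{-1}e^{-\sqrt{2}\alpha n}$, and imposing $n\ge \tfrac{1}{\sqrt 2\,\alpha}\log\tfrac{3}{\epsilon\alpha^2}$ is exactly what is needed to drive this below $\epsilon$, yielding (i).

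\textbf{Step 3 (control on the inner interval).} For part (ii), the key structural observation is that $g_n(x)$ is \emph{odd} in $x$, since $q_n$ is evaluated at the even expression $1+\kappa-2x^2$; so it suffices to prove $0\le g_n(x)\le 1$ on $[0,\alpha]$, the other inequality following by symmetry. For $x\in[0,\alpha]$, the argument $y=1+\kappa-2x^2$ ranges over $[1,1+\kappa]$, which lies \emph{outside} $[-1,1]$, so Chebyshev approximation no longer governs $q_n$ pointwise. I would instead argue directly from the Chebyshev expansion: $f$ is nonnegative and monotonically increasing on $[-1,1+\kappa)$, its Chebyshev coefficients $c_k$ have a definite sign pattern, and $\mathcal{T}_k(y)$ grows monotonically in $|y|$ outside $[-1,1]$. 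This allows one to sandwich $g_n(x)=xq_n(y)$ between its boundary values $g_n(0)=0$ and $g_n(\alpha)$, the latter being within $\epsilon$ of $1$ by Step 1--2. Rescaling $\epsilon$ by a constant factor yields the clean bounds $0\le g_n(x)\le 1$.

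\textbf{Main obstacle.} The routine portion is Steps 1--2, which are textbook Chebyshev approximation once the reduction is spotted. The delicate part is Step 3: obtaining the \emph{sharp} two-sided bound $0\le g_n\le 1$ (rather than merely $|g_n|=O(1)$) on $[0,\alpha]$, a region where $q_n$ is evaluated outside its natural domain $[-1,1]$ and where pointwise Chebyshev error estimates give no useful information. Extracting monotonicity and sign information from the explicit coefficients $c_k$ of the Chebyshev interpolant of the square-root singularity is the crux of the argument.
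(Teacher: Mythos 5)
First, a point of comparison: the paper does not prove this statement at all --- it is imported verbatim as Theorem 5.1 of \cite{Allen-Zhu-Li:ICML:17} and used as a black box, so there is no internal proof to measure your sketch against; your proposal has to stand on its own as a reconstruction of the cited result. On that footing, Steps 1--2 are a reasonable and standard route to part (i): the identity $x\,f(1+\kappa-2x^2)=\mathrm{sgn}(x)$ on $|x|\in[\alpha,1]$, the observation that $y=1+\kappa-2x^2$ stays in $[2\alpha^2-1,1]\subset[-1,1]$ there, and a Bernstein-ellipse bound for the Chebyshev interpolant of $\bigl(\tfrac{1+\kappa-y}{2}\bigr)^{-1/2}$ do give a decay of the form $\alpha^{-O(1)}e^{-c\alpha n}$, and matching the stated threshold $n\ge\frac{1}{\sqrt 2\alpha}\log\frac{3}{\epsilon\alpha^2}$ is only a matter of constant bookkeeping (one must take $\rho$ strictly inside the singular ellipse and track how $\|f\|_{L^\infty(E_\rho)}$ blows up as $\rho$ approaches it, which you gloss over but which is routine).

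The genuine gap is Step 3, and you have correctly located it but not closed it. Part (ii) is an \emph{exact}, $\epsilon$-independent statement about the specific polynomial $g_n$, so the closing move ``rescaling $\epsilon$ by a constant factor yields $0\le g_n\le 1$'' cannot work: from (i) alone the boundary value $g_n(\alpha)$ is only known to lie in $[1-\epsilon,1+\epsilon]$, and no renormalization of $\epsilon$ converts that into $g_n\le 1$ for the polynomial as defined in \eqref{eq:chenyshev-sign}. The two ingredients you invoke are also unsubstantiated. The ``definite sign pattern'' of the $c_k$ is not automatic: the $c_k$ are interpolation (not series) coefficients, hence aliased combinations of the true Chebyshev coefficients of the square-root singularity, and aliasing at Chebyshev nodes introduces sign alternations, so positivity of the series coefficients does not transfer without an argument. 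And the proposed sandwich between $g_n(0)=0$ and $g_n(\alpha)$ presumes monotonicity of $x\mapsto x\,q_n(1+\kappa-2x^2)$ on $[0,\alpha]$, which is not established and is not obviously favorable: as $x$ increases, the argument $y$ \emph{decreases} from $1+\kappa$ to $1$, so growth of $q_n$ (or of the $\mathcal{T}_k$) outside $[-1,1]$ acts against the factor $x$, and the sign of the derivative of the product is exactly what needs to be controlled. As it stands, part (ii) --- the part the downstream results of this paper actually rely on when the admissible polynomials map $[0,(1-\gamma)\lambda]$ and $[(1+\gamma)\lambda,1]$ into $[-1,0)$ and $(0,1]$ --- remains unproved in your sketch.
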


\subsection{\texttt{QuickPCP}: An approach for $(\gamma, \epsilon)$-approximate PCP}

With the discussion given in the previous subsection, computing a $(\gamma, \epsilon)$-approximate PCP for $P_{(A,\lambda)}\chi$ becomes evaluating $g_n(2r_\lambda(A^\top A)-I) \chi$, where $g_n$ is given in \eqref{eq:chenyshev-sign}. As it is well known in numerical analysis,  the Clenshaw algorithm \cite{Clenshaw:MC:1955}, as a generalization of Horner's method for evaluating a linear combination of monomials, is often used to evaluate a linear combination of Chebyshev polynomials. The work in \cite{Allen-Zhu-Li:ICML:17} adopted the Clenshaw algorithm and a ridge regression solver to compute $g_n(A^\top A) \chi$, resulting in a so-called \texttt{QuickPCP} in Algorithm~\ref{alg:allen-zhu}.
\begin{algorithm} \label{alg:allen-zhu}
\SetKwInOut{Input}{Input}
\SetKwInOut{Output}{Output}
\caption{\texttt{QuickPCP}$(A,\chi,\lambda,\gamma,n)$}
	
	\Input{ $A \in \mathbb{R}^{m \times d}, \text{ } \|A\| =1$; \quad $ \chi \in \mathbb{R}^d$;\quad  $\lambda > 0, $ threshold; \newline
	 \quad $\gamma \in (0,1),$ PCP approximation ratio \quad $n$, degree of Chebyshev approximation}
	
	\Output{vector that approximates $\zeta \approx P_{(A,\lambda)}(\chi)$}
	
	$\gamma \leftarrow \max\{\gamma, \frac{\log(n)}{n}\}$
	
	$\alpha = \frac{ \gamma}{2+\gamma}$
	
	$\kappa = 2 \alpha^2$
		
	Define $c_k = \frac{2-\mathbbm{1}[k=0]}{n+1} \sum_{j=0}^n \sqrt{2} \cos\left(\frac{k(j+.5)\pi}{n+1}\right)\left(1+\kappa - \cos\left(\frac{(j+.5)\pi}{n+1}\right)\right)^{-1/2} $\
	
	$b_{n+1} = 0$, $b_n =c_n  \chi$\
	
	\For{$ r = n-1$ \textbf{to} $0$}{
	
	$ v =  \mathcal{R}(A,\lambda, (A^\top A-\lambda I)b_{r+1})$
	
	$w=(1+\kappa)b_{r+1}- 2 \mathcal{R}(A,\lambda,(A^\top A-\lambda I)v)$ \
	
	$b_r = 2w-b_{r+2}+c_r \chi$\
	
}
$u=\mathcal{R}(A,\lambda, (A^\top A-\lambda I)b_0-w)$\

\Return{$\frac{1}{2} (u+\chi)$}
	
\end{algorithm}

Lines 6-10 of Algorithm~\ref{alg:allen-zhu} are the realization of Clenshaw algorithm for matrices. There are a total of $2n+1$ calls for a ridge regression solver $\mathcal{R}$. The convergence of \texttt{QuickPCP} is summarized in the following result.
\begin{lemma}[Theorem 7.3 of \cite{Allen-Zhu-Li:ICML:17}] \label{Theorem7.3}
 Given $A \in \mathbb{R}^{m\times d}$ and $\lambda, \gamma \in (0,1)$, assume that the  singular values of $A$ are in the range $[0,\sqrt{(1-\gamma)\lambda}] \cup [\sqrt{(1+\gamma)\lambda},1]$. Given $\chi \in \mathbb{R}^d$, let $\zeta^*=P_{(A,\lambda)}\chi$ be the exact PCP solution. If $\mathcal{R}$ is an $\epsilon'$-approximate ridge regression solver, then the output $\zeta=\texttt{QuickPCP}(A,\chi,\lambda,\gamma ,n)$ satisfies
\begin{equation*}
\|\zeta^*-\zeta\|\leq \epsilon \|\chi\|,
\end{equation*}
as long as $n \geq  \frac{2+\gamma}{\sqrt{2} \gamma} \log\left( \frac{3(2+\gamma)^2}{\epsilon \gamma^2}\right)$ and $\log(\frac{1}{\epsilon'}) = \Theta (\log(\frac{n}{\epsilon \gamma}))$.
\end{lemma}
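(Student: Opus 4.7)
The plan is to decompose the total error into a polynomial approximation term (controlled by Lemma~\ref{lemma:Allen-Zhu-Chebyshev}) and a ridge regression perturbation term (controlled by stability of the Clenshaw recursion), and then to balance them via the choice of $\epsilon'$.

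Step 1 (Reduction to the eigenbasis). Write $A^\top A = \sum_i \sigma_i^2 v_i v_i^\top$ and expand $\chi = \sum_i c_i v_i$. By assumption each $\sigma_i^2$ lies in $[0,(1-\gamma)\lambda]\cup[(1+\gamma)\lambda,1]$, and since $r_\lambda$ is monotone, a direct calculation shows that the spectrum of $T := 2r_\lambda(A^\top A)-I$ is contained in $[-1,-\alpha]\cup[\alpha,1]$ with $\alpha = \gamma/(2+\gamma)$. Because $s_{\frac12}(r_\lambda(x)) = \tfrac{1}{2}(1+\mathrm{sgn}(2r_\lambda(x)-1))$, the exact PCP output is $\zeta^* = \tfrac{1}{2}\chi + \tfrac{1}{2}\mathrm{sgn}(T)\chi$.

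Step 2 (Idealized analysis with exact ridge). Suppose for a moment that $\mathcal{R}$ were the exact operator $(A^\top A+\lambda I)^{-1}$. Then the Clenshaw recursion in lines 6--10 would, by the standard Clenshaw identity applied to Chebyshev polynomials in $T$, produce exactly $\tilde\zeta := \tfrac{1}{2}(I + g_n(T))\chi$. Invoking Lemma~\ref{lemma:Allen-Zhu-Chebyshev} with $\kappa = 2\alpha^2$ gives $|g_n(y)-\mathrm{sgn}(y)|\le \epsilon/2$ on $[-1,-\alpha]\cup[\alpha,1]$ provided $n \ge \frac{1}{\sqrt{2}\alpha}\log(6/(\epsilon\alpha^2))$; substituting $\alpha = \gamma/(2+\gamma)$ recovers the stated lower bound on $n$ up to the constant hidden in the logarithm. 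Since every eigenvalue of $T$ lies in the admissible set, the spectral theorem yields $\|\tilde\zeta - \zeta^*\| \le \tfrac{\epsilon}{2}\|\chi\|$.

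Step 3 (Perturbation of Clenshaw by the ridge solver). Each iteration of lines 7--8 replaces two evaluations of $(A^\top A + \lambda I)^{-1}$ by the $\epsilon'$-approximate solver $\mathcal R$. Let $b_r$ and $\tilde b_r$ denote the actual and idealized iterates. Unrolling the linear recursion for the difference $b_r - \tilde b_r$ gives a telescoping bound of the form $\|b_r - \tilde b_r\| \le C\epsilon' \sum_{s>r}\|\tilde b_s\|$, where the norm of $(A^\top A-\lambda I)\tilde b_s$ is controlled by $\|\tilde b_s\|$ because $A^\top A$ has spectrum in $[0,1]$. A Chebyshev triangle inequality bounds $\|\tilde b_s\| \le \|\chi\|\sum_{k\ge s}|c_k|$, and since $(1+\kappa-x)^{-1/2}$ is analytic on $[-1,1]$ for $\kappa>0$, its Chebyshev coefficients $c_k$ decay geometrically, so the cumulative ridge-induced error is at most $\mathrm{poly}(n)\cdot\epsilon'\|\chi\|$. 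Choosing $\epsilon'$ so that this is $\le \tfrac{\epsilon}{2}\|\chi\|$ forces $\log(1/\epsilon')=\Theta(\log(n/(\epsilon\gamma)))$, matching the hypothesis. Combining Steps 2 and 3 via the triangle inequality yields $\|\zeta^*-\zeta\|\le\epsilon\|\chi\|$.

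The main obstacle I expect is Step 3: tracking how the $\epsilon'$ solver errors propagate through $n$ iterations of a Clenshaw recursion that is not a contraction. A purely naive bound would grow exponentially in $n$. The key leverage point is that the gap hypothesis makes the target function $g_n \circ (2r_\lambda(\cdot)-I)$ bounded on the whole spectrum of $A^\top A$, so the ``amplification factors'' $\sum_k|c_k|$ and $\|\tilde b_s\|$ remain $\mathrm{poly}(n)$; isolating this behavior — rather than a crude per-iteration triangle inequality — is the technical heart of the argument and is what ultimately fixes the logarithmic scaling of $\log(1/\epsilon')$ with respect to $n/(\epsilon\gamma)$.
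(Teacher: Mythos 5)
First, note that the paper does not prove this lemma at all: it is imported verbatim as Theorem 7.3 of \cite{Allen-Zhu-Li:ICML:17}, so the only benchmark is the original proof there, which rests on that paper's ``stable matrix Chebyshev approximation'' theorem. Your Steps 1--2 are fine and match the standard setup: under the spectral-gap hypothesis the matrix $2r_\lambda(A^\top A)-I$ has spectrum in $[-1,-\alpha]\cup[\alpha,1]$ with $\alpha=\gamma/(2+\gamma)$, and with an exact ridge solver the Clenshaw recursion of Algorithm~\ref{alg:allen-zhu} returns $\tfrac12(I+g_n(2r_\lambda(A^\top A)-I))\chi$, so Lemma~\ref{lemma:Allen-Zhu-Chebyshev} handles that term.

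The genuine gap is Step 3, which you yourself flag as ``the technical heart'' but do not actually carry out, and the bound you do write down is not what unrolling the recursion gives. The error $e_r=b_r-\tilde b_r$ obeys a second-order recurrence $e_r=2M e_{r+1}-e_{r+2}+\eta_r$, where $M=(1+\kappa)I-2T^2$ (with $T=(A^\top A+\lambda I)^{-1}(A^\top A-\lambda I)$) is the matrix argument of the Chebyshev sum and $\eta_r$ collects the two per-iteration solver errors. Unrolling therefore multiplies each injected error $\eta_s$ by a Chebyshev-type polynomial (second kind) of degree $s-r$ evaluated at $M$ --- not by a constant $C$ --- so the claimed telescoping estimate $\|b_r-\tilde b_r\|\le C\epsilon'\sum_{s>r}\|\tilde b_s\|$ does not follow. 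Moreover the spectrum of $M$ reaches $1+\kappa>1$, where Chebyshev polynomials grow like $(1+O(\alpha))^k$; since $n=\Theta(\alpha^{-1}\log(1/(\epsilon\alpha^2)))$, this growth is exactly borderline-polynomial in $1/(\epsilon\gamma)$, and controlling it (together with bounds on $\|\tilde b_s\|$, which need the same argument) is precisely what forces $\log(1/\epsilon')=\Theta(\log(n/(\epsilon\gamma)))$. The mechanisms you invoke instead --- geometric decay of the coefficients $c_k$ and boundedness of $g_n$ on the spectrum --- do not substitute for this stability analysis of the backward recurrence, which is the main technical contribution of \cite{Allen-Zhu-Li:ICML:17}. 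As written, the proposal reproduces the easy half of the argument and leaves the defining difficulty unproven.
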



We can observe that the computational cost for computing approximate PCP, through \texttt{QuickPCP}, depends on both the degree of the polynomial $g_n$ and the cost of ridge regression. According to Lemma~\ref{lemma:Allen-Zhu-Chebyshev}, the degree of the polynomial $g_n$ is determined by the parameter $\alpha$ under the same accuracy $\epsilon$. An increased value of $\alpha$ yields a lower degree polynomial, therefore requiring less computation time for evaluation. The value of $\alpha$ reflects the steepness of the function $r_\lambda$ at $\lambda$, i.e., the steepness of $2r_\lambda-1$ at $\lambda$. This value is
\begin{equation}\label{eq:alpha-rr}
\alpha=\min \{1-2 r_\lambda((1-\gamma)\lambda), 2 r_\lambda((1+\gamma)\lambda)-1\}=\frac{\gamma}{2+\gamma}.
\end{equation}
The cost of ridge regression varies and depends on several factors, including the distribution of the eigenvalues of $A^\top A$ away from $\lambda$. Since each call to ridge regression attempts to approximate $(A^\top A + \lambda I)^{-1}(A^\top A - \lambda I) \chi$, we can view this routine as being at least as complex as evaluating a degree 2 matrix polynomial.

With all of this, the question is to determine whether or not the calls to ridge regression can be replaced with a different function that will reduce the convergence time of \texttt{QuickPCP}. In the rest of this paper, we show theoretically that a polynomial with low degree can serve our goal very well.

\section{Admissible Polynomials for PCP} \label{sec:Admissible-Ploy}
As discussed in the preceding section, we want to replace the rational function $2r_\lambda-1$ with a polynomial which can reduce the convergence time of \texttt{QuickPCP}. In this section, the requirements for such polynomials are defined and an optimal one among these polynomials with a fixed degree under certain sense is posted as a solution to an optimization problem. To facilitate the discussion and characterize these polynomials, we introduce two concepts, namely, $(\lambda, \gamma)$-admissible polynomials and the optimal $(\lambda, \gamma)$-admissible polynomials. We show that a $(\lambda, \gamma)$-admissible polynomial can be derived from a $(1-\lambda, \gamma')$-admissible polynomial of the same degree with a proper $\gamma'$. This result allows us to focus on the construction of $(\lambda, \gamma)$-admissible polynomials with $\lambda \le 1/2$.  At the end of the section, we give the explicit expression of the optimal $(\lambda, \gamma)$-admissible polynomial of degree $1$.

We begin with pointing out the properties of $2r_\lambda-1$ that allow it to be effective in producing approximate PCP. The first property is that the function maps the interval $[0,1]$ inside the interval $[-1,1]$. This allows for the composition of ridge regression with a Chebyshev polynomial. Next, the interval $[0, \lambda-\lambda\gamma)$ is mapped to the interval $[-1,2r_\lambda(\lambda-\lambda\gamma)-1) \subset [-1,0)$ and the interval $(\lambda+\lambda\gamma,1]$ is mapped to the interval $(2r_\lambda(\lambda+\lambda\gamma)-1,1]\subset (0,1]$. Then, by choosing $\alpha$ given in \eqref{eq:alpha-rr},  we can invoke Lemma~\ref{lemma:Allen-Zhu-Chebyshev} to determine the required degree of the Chebyshev polynomial  to ensure a desirable approximate PCP. These properties motivate the following definition.

\begin{definition}\label{def:admissible-poly}
For $\lambda \in (0,1)$ and $\gamma \in (0, 1)$  with $\lambda(1+\gamma)<1$,  a polynomial $p$ is said to be $(\lambda, \gamma)$-admissible if the following conditions are satisfied:
\begin{itemize}
\item[(i)] $p(\lambda)=0$;
\item[(ii)] for all $x\in [0, (1-\gamma)\lambda]$, $-1 \le p(x) \le p((1-\gamma)\lambda) <0$; and
\item[(iii)] for all $x\in [(1+\gamma)\lambda,1]$, $0 < p((1+\gamma)\lambda)\le p(x) \le 1$.
\end{itemize}
Furthermore, associated with this $(\lambda, \gamma)$-admissible polynomial $p$, the positive value, denoted by $\alpha(p;\lambda, \gamma)$,
$$
\alpha(p;\lambda, \gamma) = \min\{-p((1-\gamma)\lambda), p((1+\gamma)\lambda)\}
$$
is called the $(\lambda, \gamma)$-admissible gap of $p$.
\end{definition}

The first part of the definition states that the polynomial should send $\lambda$ to $0$, which also occurs with $2r_\lambda -1$. The second and third parts force the range of the polynomial to be between $-1$ and $1$ on the interval $[0,1]$ so that it may be composed with a Chebyshev polynomial. These conditions also bound the output of the polynomial away from $0$ when it is evaluated outside of the interval $(\lambda(1-\gamma), \lambda(1+\gamma))$. This was inspired by the result of Lemma \ref{lemma:Allen-Zhu-Chebyshev}.

With this definition, a $(\lambda, \gamma)$-admissible polynomial $p$ can be used in place of $2r_\lambda -1$. By replacing $\alpha=\frac{\gamma}{2+\gamma}$ (see \eqref{eq:alpha-rr}), used in ridge regression approach, with $\alpha(p;\lambda,\gamma)$, and adjusting the degree of the Chebyshev polynomial accordingly, the hypotheses for the main result (Lemma~\ref{Theorem7.3}) of \cite{Allen-Zhu-Li:ICML:17} will still hold.

An immediate result follows from the definition of $(\lambda,\gamma)$-admissible polynomials. This result simplifies their constructions and will be used throughout the rest of the paper.
\begin{proposition}\label{prop:reflection-lambda}
A polynomial $p$ is $(\lambda, \gamma)$-admissible if and only if the polynomial $-p(1-\cdot)$ is $(\lambda', \gamma')$-admissible, where $\lambda'=1-\lambda$ and $\gamma'=\frac{\lambda \gamma}{1-\lambda}$. The $(\lambda, \gamma)$-admissible gap of $p$ is identical to the $(\lambda', \gamma')$-admissible gap of $-p(1-\cdot)$.
\end{proposition}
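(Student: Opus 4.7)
The plan is a direct verification by substitution. Set $q(x) := -p(1-x)$ and check the three conditions of Definition~\ref{def:admissible-poly} for $q$ at parameters $(\lambda',\gamma') = \bigl(1-\lambda,\; \tfrac{\lambda\gamma}{1-\lambda}\bigr)$. First, a quick sanity check that these parameters lie in the allowed range: $\lambda' \in (0,1)$ is immediate, $\gamma' > 0$ is clear, and $\gamma' < 1$ is equivalent to $\lambda(1+\gamma)<1$, which is assumed. Likewise $\lambda'(1+\gamma') = (1-\lambda) + \lambda\gamma = 1-(1-\gamma)\lambda < 1$, so the standing hypothesis on $(\lambda',\gamma')$ holds.

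The bulk of the proof is then the two endpoint identities
\[
(1-\gamma')\lambda' = 1-(1+\gamma)\lambda, \qquad (1+\gamma')\lambda' = 1-(1-\gamma)\lambda,
\]
obtained by direct algebra. Using them, condition (i) for $q$ becomes $q(\lambda')=-p(\lambda)=0$; condition (ii) for $q$ on $[0,(1-\gamma')\lambda']$ is, under the change of variable $y=1-x$, exactly condition (iii) for $p$ on $[(1+\gamma)\lambda,1]$, read with a sign flip; and condition (iii) for $q$ on $[(1+\gamma')\lambda',1]$ is, under the same change of variable, exactly condition (ii) for $p$ on $[0,(1-\gamma)\lambda]$ with a sign flip. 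Each implication amounts to multiplying the inequality $-1\le p\le\ldots$ by $-1$ and reversing.

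For the admissible gap, the endpoint identities give
\[
-q((1-\gamma')\lambda') = p((1+\gamma)\lambda), \qquad q((1+\gamma')\lambda') = -p((1-\gamma)\lambda),
\]
so $\alpha(q;\lambda',\gamma') = \min\{p((1+\gamma)\lambda),\,-p((1-\gamma)\lambda)\} = \alpha(p;\lambda,\gamma)$.

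Finally, the converse direction requires no extra work: the transformation $p\mapsto -p(1-\cdot)$ is an involution, since $-(-p(1-(1-x)))=p(x)$, and the pair $(\lambda',\gamma')\mapsto(1-\lambda',\tfrac{\lambda'\gamma'}{1-\lambda'})$ returns $(\lambda,\gamma)$ by the same algebraic identities. I do not anticipate a genuine obstacle here; the only subtlety worth being careful about is bookkeeping the sign flip so that the ``$<0$'' and ``$>0$'' strict inequalities in (ii) and (iii) correctly swap roles.
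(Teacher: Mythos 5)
Your proposal is correct and follows essentially the same route as the paper: direct verification of Definition~\ref{def:admissible-poly} via the substitution $y=1-x$ together with the endpoint identities $(1\mp\gamma')\lambda'=1-(1\pm\gamma)\lambda$, and the same two identities for the gap. The only (harmless) differences are that you dispatch the converse by observing that $p\mapsto -p(1-\cdot)$ and $(\lambda,\gamma)\mapsto(\lambda',\gamma')$ are involutions, where the paper simply says the reverse direction is shown similarly, and that you additionally verify $\gamma'\in(0,1)$ and $\lambda'(1+\gamma')<1$, a sanity check the paper leaves implicit.
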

\begin{proof} \ \ 
Assume that   $p$ is $(\lambda, \gamma)$-admissible. Write $q=-p(1-\cdot)$. Immediately, $q(\lambda')=-p(1-\lambda')=-p(\lambda)=0$. For $x\in [0, (1-\gamma')\lambda']$, then $1-x \in [\lambda(1+\gamma),1]$, therefore, $0<p((1+\gamma)\lambda) \le p(1-x) \le 1$ which implies $-1\le q(x) \le q((1-\gamma')\lambda')<0$; for $x\in [(1+\gamma')\lambda',1]$, then $1-x \in [0,\lambda(1-\gamma)]$, therefore $-1 \le p(1-x) \le p((1-\gamma)\lambda) <0$ which implies $0 < q((1-\gamma')\lambda') \le q(x) \le 1$. Hence, $q$ is $(\lambda', \gamma')$-admissible.

The reverse part of this proposition can be shown similarly. Furthermore, since $p((1-\gamma)\lambda)=-q((1+\gamma')\lambda')$ and   $p((1+\gamma)\lambda)=-q((1-\gamma')\lambda')$, we know that $\alpha(p;\lambda, \gamma)=\alpha(q;\lambda', \gamma')$. This completes the proof.
\end{proof}

The above proposition indicates that we only need to study the case of $\lambda \le 1/2$ for admissible polynomials since any admissible polynomial for $\lambda>1/2$ can be derived directly from an admissible polynomial with $\lambda<1/2$.

Our ultimate goal is to replace the rational function $2r_\lambda-1$ with a suitable $(\lambda,\gamma)$-admissible polynomial, $p$. There are two general requirements for this polynomial $p$. One is that the computational cost for the evaluation of $p(A^\top A) y$ for any vector $y$ should be lower than approximating $r_\lambda(A^\top A)y$, and the other is that the degree of Chebyshev polynomial, which can approximate the signum function well on the intervals away from $[-\alpha(p;\lambda, \gamma), \alpha(p;\lambda, \gamma)]$, should be low as well. Apparently, without considering the specific structures of $A^\top A$, the computational cost for evaluation of $p(A^\top A) y$ for any vector $y$ is determined by the degree of $p$ and the sparsity of $A^\top A$. Therefore, among all $(\lambda,\gamma)$-admissible polynomials with fixed degree $n$, the one yielding the lowest degree Chebyshev polynomial by Lemma~\ref{lemma:Allen-Zhu-Chebyshev} is the solution of the following optimization problem:
\begin{equation}\label{model:general}
p_{n}^{\lambda, \gamma}= \mathrm{arg}\max \{\alpha(p;\lambda, \gamma): \mbox{$p$ is any $(\lambda,\gamma)$-admissible polynomial with degree $n$} \}.
\end{equation}
We denote by $\alpha_n(\lambda, \gamma):=\alpha(p_{n}^{\lambda, \gamma};\lambda, \gamma)$ the optimal gap of $(\lambda,\gamma)$-admissible degree-$n$  polynomials.

By Proposition~\ref{prop:reflection-lambda}, we immediately have
$$
\alpha_n(\lambda, \gamma) =  \alpha_n\left(1-\lambda, \frac{\lambda \gamma}{1-\lambda}\right).
$$
To end this section, the explicit form of $p_{1}^{\lambda, \gamma}$ and  the value $\alpha_1(\lambda, \gamma)$ are given for $\lambda \le 1/2$.
\begin{proposition}\label{prop:n=1}
For $\lambda \in (0, \frac{1}{2}]$  and $\gamma \in (0,1)$, we have
\begin{equation}\label{eq:optimal-n=1}
p_{1}^{\lambda, \gamma}(x) = \frac{x-\lambda}{1-\lambda} \quad \mbox{and} \quad
 \alpha_1(\lambda, \gamma)  = \frac{\lambda \gamma}{1-\lambda}.
\end{equation}
\end{proposition}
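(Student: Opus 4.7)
The plan is to parameterize all degree-$1$ admissible polynomials by a single coefficient, extract upper bounds on that coefficient from the range conditions (ii) and (iii) in Definition~\ref{def:admissible-poly}, and then observe that the admissible gap is monotone in this coefficient so the optimum sits at the largest feasible value.

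First, I would use condition (i), $p(\lambda)=0$, to write any degree-$1$ candidate in the normal form $p(x)=a(x-\lambda)$ for some $a\in\mathbb{R}$. Since condition (iii) forces $p((1+\gamma)\lambda)=a\gamma\lambda>0$, we must have $a>0$, so $p$ is strictly increasing on $[0,1]$. This reduces admissibility to the single-variable problem of choosing $a>0$ subject to the boundary inequalities coming from the endpoints of $[0,1]$.

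Next, because $p$ is linear and increasing, conditions (ii) and (iii) collapse to two scalar constraints: $p(0)=-a\lambda\ge -1$ and $p(1)=a(1-\lambda)\le 1$, i.e.\ $a\le 1/\lambda$ and $a\le 1/(1-\lambda)$. The hypothesis $\lambda\le 1/2$ gives $1-\lambda\ge\lambda$, hence $1/(1-\lambda)\le 1/\lambda$, so the binding constraint is $a\le 1/(1-\lambda)$. Meanwhile the admissible gap evaluates to
\[
\alpha(p;\lambda,\gamma)=\min\{-p((1-\gamma)\lambda),\,p((1+\gamma)\lambda)\}=\min\{a\gamma\lambda,\,a\gamma\lambda\}=a\gamma\lambda,
\]
which is strictly increasing in $a$. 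Therefore the maximum is attained at $a=1/(1-\lambda)$, yielding $p_1^{\lambda,\gamma}(x)=(x-\lambda)/(1-\lambda)$ and $\alpha_1(\lambda,\gamma)=\lambda\gamma/(1-\lambda)$, as claimed.

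I do not expect any serious obstacle: this is essentially a one-variable linear-programming argument. The only subtlety worth stating explicitly is that at the optimum $a=1/(1-\lambda)$ we have $p(0)=-\lambda/(1-\lambda)$, and the assumption $\lambda\le 1/2$ is precisely what is needed to guarantee $p(0)\ge -1$ so that condition (ii) is not violated. Writing out the four boundary values $p(0)$, $p((1-\gamma)\lambda)$, $p((1+\gamma)\lambda)$, $p(1)$ and invoking monotonicity of $p$ should make the verification of admissibility of the optimizer transparent.
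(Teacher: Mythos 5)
Your proof is correct and follows essentially the same route as the paper: write $p(x)=a(x-\lambda)$, derive $a\le 1/\lambda$ and $a\le 1/(1-\lambda)$ from $p(0)\ge -1$ and $p(1)\le 1$, note $\alpha(p;\lambda,\gamma)=a\lambda\gamma$ is increasing in $a$, and take $a=1/(1-\lambda)$ using $\lambda\le 1/2$. Your additional remarks (positivity of $a$ from condition (iii), explicit check that the optimizer is admissible) only make the paper's argument more explicit.
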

\begin{proof}\ \ By the first condition in Definition~\ref{def:admissible-poly}, a $(\lambda,\gamma)$-admissible polynomial $p$ of degree $1$ must be in a form of $p(x)=a(x-\lambda)$. The second condition $p(0) \ge -1$ implies $a \le 1/\lambda$ while the third condition $p(1) \le 1$ implies $a \le 1/(1-\lambda)$. Hence $0<a \le 1/(1-\lambda)$ because $\lambda \le 1/2$. A direct computation gives $\alpha(p;\lambda, \gamma) = a \lambda \gamma$ whose maximal value will be achieved with $a=1/(1-\lambda)$. Hence, both $p_{1}^{\lambda, \gamma}$ and $\alpha_1(\lambda, \gamma)$ given in \eqref{eq:optimal-n=1} hold.
\end{proof}

The next section will be devoted to find $p_{2}^{\lambda, \gamma}$ and  $\alpha_2(\lambda, \gamma)$ with $\lambda < 1/2$.

\section{Optimal $(\lambda, \gamma)$-Admissible Polynomial of Degree $2$} \label{sec:degree-2}
The construction of the optimal  $(\lambda, \gamma)$-admissible polynomial of degree 2 is more complicated than its degree 1 counterpart.  Since all $(\lambda, \gamma)$-admissible polynomials vanish at $\lambda$, these polynomials with degree 2 will take a form of
\begin{equation}\label{eq:deg2}
p(x)=(x-\lambda)(ax-c),
\end{equation}
where $a$ and $c$ are two parameters depending on the requirements of admissible conditions. This quadratic function $p$ is convex if $a>0$ and concave if $a< 0$. We note that if $a=0$, then $c$ must be negative, the resulting polynomial is called the degenerated $(\lambda, \gamma)$-admissible polynomial, i.e., $(\lambda, \gamma)$-admissible degree $1$ polynomial.

For a polynomial given in \eqref{eq:deg2}, we observe that
$$
p(\lambda(1-\gamma)) + p(\lambda(1+\gamma)) = 2a \lambda^2 \gamma^2.
$$
Thus, if $p$ given in \eqref{eq:deg2} is $(\lambda, \gamma)$-admissible, then its admissible gap is
\begin{equation}\label{eq:deg2-gap}
\alpha(p; \lambda, \gamma)=\begin{cases}
 p(\lambda +\lambda\gamma) & \mbox{if $a \le 0$;}\\
-p(\lambda-\lambda\gamma) & \mbox{if $a>0$}.
 \end{cases}
\end{equation}
For the purpose of efficiently approximating PCP, the following result shows that the admissible gap for any convex quadratic polynomial cannot exceed the optimal gap of $(\lambda, \gamma)$-admissible degree 1 polynomial.
\begin{lemma}\label{lemma:a>0}
For any $(\lambda, \gamma)$-admissible polynomial $p$ given in \eqref{eq:deg2}, if $a>0$ and $\lambda \le 1/2$, then the $(\lambda, \gamma)$-admissible gap of $p$ is always less than the optimal gap of $(\lambda, \gamma)$-admissible polynomials with degree $1$, that is
$$
\alpha(p; \lambda, \gamma) \le \alpha_1(\lambda, \gamma).
$$
\end{lemma}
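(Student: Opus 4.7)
The plan is to turn the quantity $\alpha(p;\lambda,\gamma)$ into an explicit linear expression in the coefficients $a$ and $c$ and then squeeze it against a single one-sided constraint coming from evaluating $p$ at the endpoint $x=1$. Because $a>0$, formula \eqref{eq:deg2-gap} tells us that the admissible gap is controlled by what happens at the left transition point, namely
\[
\alpha(p;\lambda,\gamma)=-p\bigl((1-\gamma)\lambda\bigr)=\lambda\gamma\bigl(a(1-\gamma)\lambda-c\bigr).
\]
So everything reduces to bounding $a(1-\gamma)\lambda-c$ from above by $1/(1-\lambda)$.

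Next, I would extract the key inequality $a-c\le 1/(1-\lambda)$ from the admissibility hypothesis. Condition (iii) of Definition~\ref{def:admissible-poly}, applied at $x=1$, gives $p(1)\le 1$; since $p(1)=(1-\lambda)(a-c)$ and $1-\lambda>0$, this is exactly the bound we need. (Note that the complementary constraint $p(0)\ge -1$, which gives $\lambda c\ge -1$, is irrelevant here because it bounds $c$ from below, whereas maximizing the gap pushes $c$ downward only in combination with raising $a$; the right coupling is captured by the single quantity $a-c$ at $x=1$.)

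The final step is to observe that for $a>0$, enlarging the coefficient of $a$ from $(1-\gamma)\lambda$ to $1$ can only increase the expression, so
\[
a(1-\gamma)\lambda-c\le a-c\le \frac{1}{1-\lambda},
\]
where the first inequality uses $(1-\gamma)\lambda\le \lambda\le 1/2<1$ together with $a>0$. Multiplying through by $\lambda\gamma$ yields $\alpha(p;\lambda,\gamma)\le \lambda\gamma/(1-\lambda)$, which is $\alpha_1(\lambda,\gamma)$ by Proposition~\ref{prop:n=1}.

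There is no real obstacle in this argument, and in fact the main thing to get right conceptually is simply identifying the correct endpoint constraint: for the convex case ($a>0$) the admissible gap is measured on the left, but the binding constraint on $a-c$ comes from the right endpoint $x=1$. This is also where the hypothesis $\lambda\le 1/2$ enters in a clean way: it ensures $1-(1-\gamma)\lambda>0$, making the replacement of $(1-\gamma)\lambda$ by $1$ a genuine upper bound rather than a wasted one, and it guarantees that the reference quantity $\alpha_1(\lambda,\gamma)=\lambda\gamma/(1-\lambda)$ from Proposition~\ref{prop:n=1} is indeed the correct benchmark (the $\lambda>1/2$ regime being recoverable from Proposition~\ref{prop:reflection-lambda}).
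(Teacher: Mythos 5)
Your proof is correct and follows essentially the same route as the paper: both arguments rest on the endpoint constraint $p(1)=(1-\lambda)(a-c)\le 1$ from condition (iii) together with $a>0$, which lets one replace $a\lambda(1-\gamma)-c$ by $a-c\le\frac{1}{1-\lambda}$ and conclude $\alpha(p;\lambda,\gamma)\le\frac{\lambda\gamma}{1-\lambda}=\alpha_1(\lambda,\gamma)$. The paper merely packages the same inequality as the identity $1-(1-\lambda)(a\lambda(1-\gamma)-c)=1-p(1)+a(1-\lambda)(1-\lambda(1-\gamma))>0$, so there is no substantive difference.
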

\begin{proof} \ \ From Proposition~\ref{prop:n=1} and \eqref{eq:deg2-gap}, we have that $\alpha_1(\lambda,\gamma)=\frac{\lambda\gamma}{1-\lambda}$ and $\alpha(p; \lambda, \gamma)=-\lambda\gamma(a\lambda(1-\gamma)-c)$. Then
$$
\alpha_1(\lambda, \gamma)-\alpha(p; \lambda, \gamma)=\frac{\lambda\gamma}{1-\lambda}(1-(1-\lambda)(a\lambda(1-\gamma)-c)).
$$
Note that $1-(1-\lambda)(a\lambda(1-\gamma)-c)=1-p(1)+a(1-\lambda)(1-\lambda(1-\gamma))$ which is positive. This completes the proof.
\end{proof}

Therefore, from Lemma~\ref{lemma:a>0} we should investigate $(\lambda, \gamma)$-admissible concave quadratic polynomials in the form \eqref{eq:deg2} for $\lambda \in (0, 1/2]$. In particular, for $\lambda=1/2$, we have the following result.
\begin{lemma}\label{lemma:a<0andlambda=1/2}
For any $(1/2, \gamma)$-admissible polynomial $p$ given in \eqref{eq:deg2}, if $a<0$, then the $(1/2, \gamma)$-admissible gap of $p$ is always less than the optimal gap of $(1/2, \gamma)$-admissible degree 1 polynomial, that is
$$
\alpha(p; 1/2, \gamma) \le \alpha_1(1/2, \gamma).
$$
\end{lemma}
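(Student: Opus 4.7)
The plan is to invoke the symmetry in Proposition~\ref{prop:reflection-lambda} to reduce this concave case to the convex case already handled in Lemma~\ref{lemma:a>0}. The key observation is that $\lambda = 1/2$ is a fixed point of the reflection: with $\lambda' = 1 - \lambda = 1/2$ and $\gamma' = \frac{\lambda \gamma}{1-\lambda} = \gamma$, a polynomial $p$ is $(1/2,\gamma)$-admissible if and only if $q := -p(1-\cdot)$ is $(1/2,\gamma)$-admissible, and $\alpha(p;1/2,\gamma) = \alpha(q;1/2,\gamma)$.

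First, I would write $p(x) = (x-1/2)(ax - c)$ and compute $q(x) = -p(1-x)$ explicitly, checking that it retains the factored form in \eqref{eq:deg2}. A short calculation gives
\[
q(x) = (x - 1/2)\bigl((-a)x + (a-c)\bigr),
\]
so $q$ is a degree-2 polynomial with leading coefficient $-a$. Since the hypothesis is $a < 0$, this leading coefficient is strictly positive, i.e., $q$ is a convex quadratic $(1/2,\gamma)$-admissible polynomial.

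Next, I would apply Lemma~\ref{lemma:a>0} to $q$, which requires exactly the conditions just established (positive leading coefficient and $\lambda = 1/2 \le 1/2$). This yields
\[
\alpha(q; 1/2, \gamma) \le \alpha_1(1/2, \gamma).
\]
Finally, the gap-preservation statement in Proposition~\ref{prop:reflection-lambda} gives $\alpha(p;1/2,\gamma) = \alpha(q;1/2,\gamma)$, and combining these two equalities/inequalities produces the desired bound $\alpha(p;1/2,\gamma) \le \alpha_1(1/2,\gamma)$.

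I do not expect any serious obstacle here; the only subtlety is to confirm that the reflection $p \mapsto -p(1-\cdot)$ takes the factored form \eqref{eq:deg2} to another factored form with the sign of the quadratic coefficient flipped, which is a brief algebraic check. The whole argument is essentially free once one notices that $\lambda = 1/2$ makes the reflection in Proposition~\ref{prop:reflection-lambda} an involution that swaps concave and convex admissible polynomials while preserving the admissible gap.
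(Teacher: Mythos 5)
Your proposal is correct and follows essentially the same route as the paper: define $q=-p(1-\cdot)$, note that $\lambda=1/2$ is fixed under the reflection of Proposition~\ref{prop:reflection-lambda} so that $q(x)=(x-1/2)\bigl((-a)x-(c-a)\bigr)$ is a convex $(1/2,\gamma)$-admissible quadratic, apply Lemma~\ref{lemma:a>0} to $q$, and transfer the bound back via gap preservation. No gaps.
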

\begin{proof} \ \ Define $q=-p(1-\cdot)$. By Proposition~\ref{prop:reflection-lambda}, $q$ is also a $(1/2, \gamma)$-admissible polynomial with $\alpha(q;1/2,\gamma)=\alpha(p;1/2,\gamma)$. Note that $q(x)=(x-1/2)(-ax-(c-a))$. By Lemma~\ref{lemma:a>0}, $\alpha(q;1/2,\gamma)\le \alpha_1(1/2,\gamma)$. Therefore, $\alpha(p; 1/2, \gamma) \le \alpha_1(1/2, \gamma)$.
\end{proof}

The implication from both Lemma~\ref{lemma:a>0} and Lemma~\ref{lemma:a<0andlambda=1/2} is that we should pay attention on the construction of an optimal $(\lambda, \gamma)$-admissible concave quadratic polynomial in \eqref{eq:deg2} with $\lambda \in (0,1/2)$. In the case of $a<0$, equation~\eqref{eq:deg2-gap} states that
\begin{equation}\label{eq:gap-a<0}
\alpha(p; \lambda, \gamma)= \lambda^2\gamma(1+\gamma)a-\lambda \gamma c.
\end{equation}

We now show that the gap $\alpha(p; \lambda, \gamma)$ in \eqref{eq:gap-a<0} for any $(\lambda, \gamma)$-admissible concave quadratic function is always greater than the product of the optimal gap of $(\lambda, \gamma)$-admissible degree-$1$ polynomial and the maximal value of the quadratic function over the interval $[\lambda, 1]$.
\begin{lemma}\label{lemma:a<0}
For any $(\lambda, \gamma)$-admissible concave quadratic polynomial $p$ given in \eqref{eq:deg2} with $\lambda \in(0, 1/2)$, we have
$$
\alpha(p;\lambda,\gamma) \ge \alpha_1(\lambda,\gamma) \cdot \max_{x\in [\lambda, 1]} p(x).
$$
\end{lemma}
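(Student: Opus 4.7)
The plan is to exploit concavity of $p$ together with the identity $\alpha(p;\lambda,\gamma)=p((1+\gamma)\lambda)$ from \eqref{eq:deg2-gap} (which applies because $a<0$). Set $M=\max_{x\in[\lambda,1]}p(x)$ and let $x_0\in[\lambda,1]$ be a maximizer. Because $p$ is concave on $\mathbb{R}$ and $p(\lambda)=0$, the graph of $p$ on $[\lambda,x_0]$ lies above its chord from $(\lambda,0)$ to $(x_0,M)$. Evaluating that chord at $(1+\gamma)\lambda$ (which must lie in $[\lambda,x_0]$, as I argue below) yields
$$p((1+\gamma)\lambda)\;\ge\;\frac{(1+\gamma)\lambda-\lambda}{x_0-\lambda}\,M\;=\;\frac{\gamma\lambda}{x_0-\lambda}\,M.$$
Since $x_0\le 1$ gives $x_0-\lambda\le 1-\lambda$, the coefficient on the right is at least $\frac{\gamma\lambda}{1-\lambda}=\alpha_1(\lambda,\gamma)$ by Proposition~\ref{prop:n=1}, and the conclusion follows.

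The step that needs justification is that the maximizer $x_0$ satisfies $x_0\ge(1+\gamma)\lambda$; equivalently, that the vertex $v=(\lambda+c/a)/2$ of the concave parabola is at least $(1+\gamma)\lambda$. I plan to deduce this from admissibility condition (iii). If instead $v<(1+\gamma)\lambda$, then $p$ is strictly decreasing on the interval $[(1+\gamma)\lambda,1]$, which forces $p(1)<p((1+\gamma)\lambda)$. But (iii) requires $p((1+\gamma)\lambda)\le p(x)$ for all $x\in[(1+\gamma)\lambda,1]$, a contradiction. Hence $v\ge(1+\gamma)\lambda$, and so $x_0=\min\{v,1\}\ge(1+\gamma)\lambda$; in particular $(1+\gamma)\lambda\in[\lambda,x_0]$, validating the chord estimate above.

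Once these two pieces are in place, the remainder is routine: combine the chord inequality with $x_0\le 1$, invoke $\alpha(p;\lambda,\gamma)=p((1+\gamma)\lambda)$ from \eqref{eq:deg2-gap}, and substitute $\alpha_1(\lambda,\gamma)=\frac{\lambda\gamma}{1-\lambda}$ to obtain $\alpha(p;\lambda,\gamma)\ge\alpha_1(\lambda,\gamma)\cdot M$. The only genuine subtlety is the vertex-location argument; everything else is a one-line use of concavity. I do not expect the hypothesis $\lambda<1/2$ to play a role in this particular inequality (it is needed in the surrounding narrative to rule out the convex case via Lemma~\ref{lemma:a>0} and the $\lambda=1/2$ case via Lemma~\ref{lemma:a<0andlambda=1/2}, but the chord argument itself is valid for all admissible concave quadratics with $\lambda\in(0,1)$).
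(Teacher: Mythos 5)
Your proof is correct and is essentially the paper's own argument in geometric clothing: since $p(y)/(y-\lambda)=ay-c$, your chord inequality $p((1+\gamma)\lambda)\ge \frac{\gamma\lambda}{x_0-\lambda}M$ is exactly the paper's chain $a\lambda(1+\gamma)-c\ \ge\ ax^*-c=\frac{p(x^*)}{x^*-\lambda}\ \ge\ \frac{p(x^*)}{1-\lambda}$ multiplied through by $\lambda\gamma$. The only difference is that you spell out the vertex-location argument showing $x_0\ge(1+\gamma)\lambda$, a step the paper asserts directly from condition (iii) of Definition~\ref{def:admissible-poly} without elaboration.
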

\begin{proof}\ \ We know that $\alpha_1(\lambda,\gamma)= \frac{\lambda\gamma}{1-\lambda}$ from \eqref{eq:optimal-n=1} and $\alpha(p; \lambda, \gamma)= \lambda\gamma(\lambda(1+\gamma)a-c)$ from \eqref{eq:gap-a<0}.  It suffices to show that
$$
\lambda(1+\gamma)a-c \ge \frac{1}{1-\lambda}\cdot \max_{x\in [\lambda, 1]} p(x).
$$
Let $x^*$ be a point in $[\lambda, 1]$ such that $p(x^*)=\max_{x\in [\lambda, 1]} p(x)$. By the third requirement in the definition of admissible polynomial, we know that $\lambda(1+\gamma) \in [\lambda, x^*]$. Hence
$$
\lambda(1+\gamma)a-c \ge a x^* -c = \frac{p(x^*)}{x^*-\lambda} \ge \frac{p(x^*)}{1-\lambda}.
$$
This completes the proof.
\end{proof}

Lemma~\ref{lemma:a<0} clearly indicates that if a $(\lambda, \gamma)$-admissible concave quadratic polynomial $p$ can attain the value $1$ on $[\lambda, 1]$, then its gap is at least the magnitude of the optimal gap of the $(\lambda, \gamma)$-admissible degree $1$ polynomials. Specifically, for $\lambda \in (1-\frac{\sqrt{2}}{2},\frac{1}{2})$ if we choose $a=\frac{2\lambda-1}{\lambda(1-\lambda)}<0$ and $c=-\frac{1}{\lambda}$ in \eqref{eq:deg2}, then the polynomial $p(x)=(x-\lambda)(\frac{2\lambda-1}{\lambda(1-\lambda)}x+\frac{1}{\lambda})$ is $(\lambda, \gamma)$ admissible and $p(1)=1$. Similarly, if $\lambda \in (0,1-\frac{\sqrt{2}}{2}]$, we can choose $a=\frac{-1}{(1-\lambda)^2}, c= -\frac{2-\lambda}{(1-\lambda)^2}$ and the polynomial $p(x)= (x-\lambda)(\frac{-1}{(1-\lambda)^2}x +\frac{2-\lambda}{(1-\lambda)^2})$ is $(\lambda,\gamma)$-admissible and satisfies $p(1)=1$. Therefore, in either case, $\alpha(p; \lambda, \gamma) \ge \alpha_1(\lambda,\gamma)$ by Lemma~\ref{lemma:a<0}. Hence, from Lemma~\ref{lemma:a>0} and Lemma~\ref{lemma:a<0}, the optimal $(\lambda, \gamma)$-admissible quadratic polynomial must be concave and its gap must be bigger than  the optimal gap of $(\lambda, \gamma)$-admissible polynomial of degree $1$.

In the following discussion, we will search for a $(\lambda, \gamma)$-admissible polynomial which yields the maximum value of $\alpha(p; \lambda, \gamma)$ given in \eqref{eq:gap-a<0}. Since this value is determined by parameters $a$ and $c$ for the given pair $(\lambda,\gamma)$, we consider the following optimization problem
\begin{equation}\label{def:P}
\mathrm{arg}\min_{(a,c) \in \mathcal{S}} P(a,c):=\lambda^2\gamma(1+\gamma)a-\lambda \gamma c,
\end{equation}
where $\mathcal{S}$ is a set that contains all possible pairs of $(a,c)$ that yield $(\lambda,\gamma)$-admissible polynomials. Note that $P$ in \eqref{def:P} is a harmonic function, i.e.,
$$
\frac{\partial^2 P}{\partial a^2} + \frac{\partial^2 P}{\partial c^2}=0.
$$
By the well-known maximal principal (see, e.g., \cite{Evans:98}), if $\mathcal{S}$ is bounded, then the maximum of $P$ can be achieved on its boundary.

To evaluate $P$ on the boundary of the set $\mathcal{S}$, we first investigate the structure of this set. Given the condition $a< 0$, the inequalities of Definition~\ref{def:admissible-poly} imply that $c<0$ from the requirement $p(0)<0$. Similarly, the root $\frac{c}{a}$ must be at least $1+\lambda\gamma$ from the requirement $p(\lambda(1+\gamma)>0$. Since the polynomial $p$ attains its maximum at $\frac{1}{2}(\lambda+\frac{c}{a})$ the middle point of two roots $\lambda$ and $\frac{c}{a}$, we describe the set $\mathcal{S}$ according to the situations $\frac{1}{2}(\lambda+\frac{c}{a})\ge 1$ and $\frac{1}{2}(\lambda+\frac{c}{a})< 1$ separately.

In the rest of this section,  we present the optimal solution to problem~\eqref{def:P} under the situation of $\frac{1}{2}(\lambda+\frac{c}{a})\ge 1$ in subsection~\ref{subsec:1} and the optimal solution to problem~\eqref{def:P} under the situation of $\frac{1}{2}(\lambda+\frac{c}{a})<1$ in  subsection~\ref{subsec:2}. By analyzing the results in these two subsections, we give the explicit expression of the optimal $(\lambda, \gamma)$-admissible polynomial of degree 2 in subsection~\ref{subsec:2}.

\subsection{The Case of $\frac{1}{2}(\lambda+\frac{c}{a})\ge 1$}\label{subsec:1}
Recall the $(\lambda, \gamma)$-admissible polynomial $p$ in \eqref{eq:deg2} with $a < 0$ and $\lambda \in (0, 1/2)$.
The case $\frac{1}{2}(\lambda+\frac{c}{a})\ge 1$ says $c\le (2-\lambda)a$. The condition $-1 \le p(x) \le p((1-\gamma)\lambda) <0$ for all $x\in [0, (1-\gamma)\lambda]$ requires $p(0) \ge -1$ which is $c\ge -\frac{1}{\lambda}$. Finally, the condition $0 \le p((1+\gamma)\lambda)\le p(x) \le 1$ for all $x\in [(1+\gamma)\lambda,1]$ is satisfied if $p(1)\le 1$ which is $c \ge a-\frac{1}{1+\lambda}$.
Hence
\begin{equation}\label{eq:S-case2}
\mathcal{S} = \left\{(a,c): a < 0, \; c\le (2-\lambda)a, \; c\ge -\frac{1}{\lambda}, \; c \ge a-\frac{1}{1+\lambda}\right\}.
\end{equation}
Clearly, the set $\mathcal{S}$ is a subset of the triangular region $\left\{(a,c): a \le 0, \; c\le (2-\lambda)a, \; c\ge -\frac{1}{\lambda}\right\}$. So the set $\mathcal{S}$  is bounded. To find the maximum value of $P$ in \eqref{def:P}, we need to identify the boundary of $\mathcal{S}$.

In the $ac$-plane, the lines $ c=(2-\lambda)a$ and $c = a-\frac{1}{1+\lambda}$ intersect when $a=-\frac{1}{(1-\lambda)^2}$ while the lines $ c=(2-\lambda)a$ and $c=-\frac{1}{\lambda}$ meet when $a=-\frac{1}{2-\lambda}$. Hence, these lines have a common intersection point when $\frac{1}{(1-\lambda)^2}=\frac{1}{2-\lambda}$, that is, $\lambda=1-\frac{\sqrt{2}}{2}$. As depicted in Figure~\ref{fig:2}, the geometric shape of $\mathcal{S}$ is a triangle for $\lambda \in (0, 1-\frac{\sqrt{2}}{2}]$ and a quadrilateral for $\lambda \in (1-\frac{\sqrt{2}}{2}, \frac{1}{2})$.

\begin{figure}[htpb]
 \centering
 \begin{tabular}{cc}
 \includegraphics[scale=0.40]{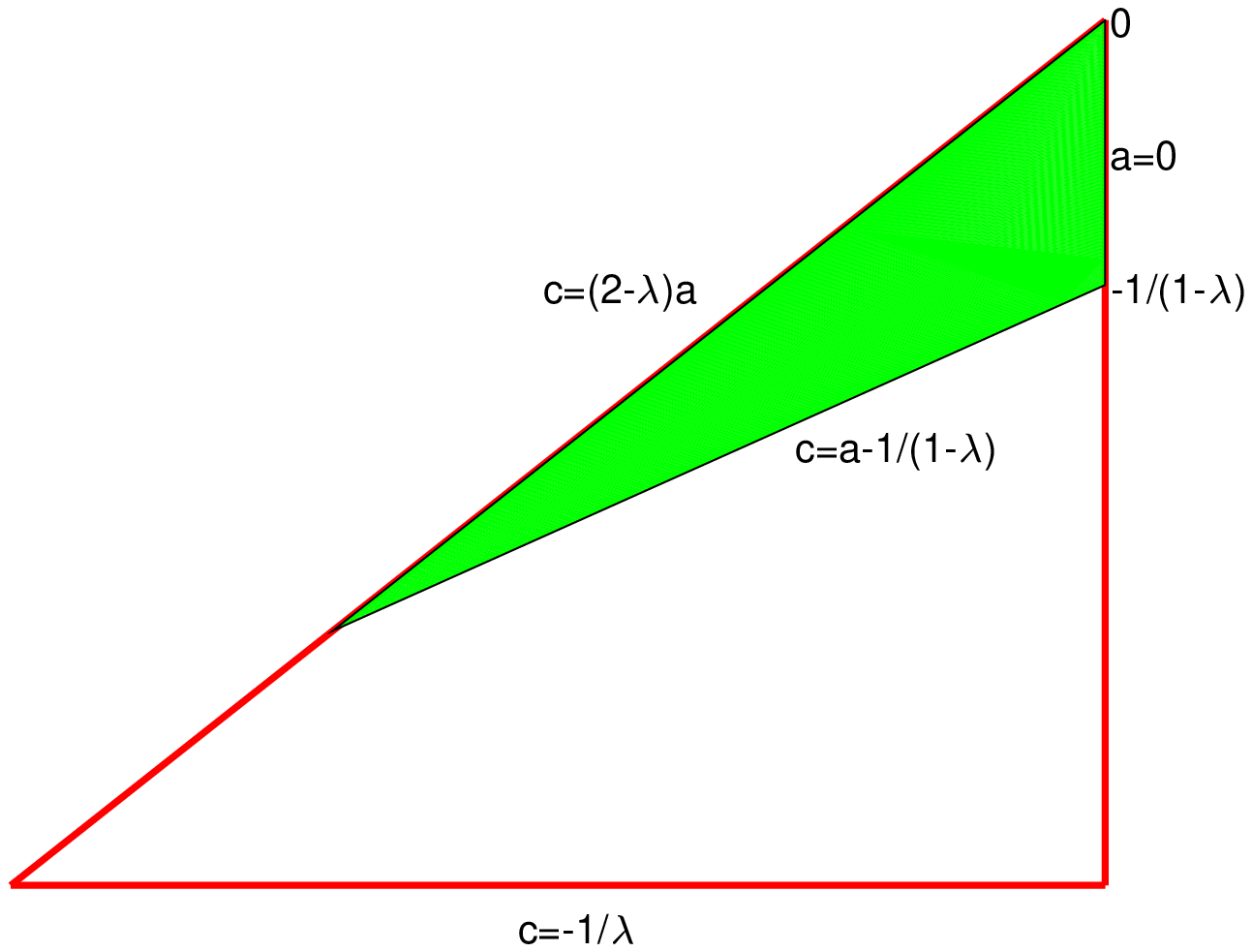}&
\includegraphics[scale=0.40]{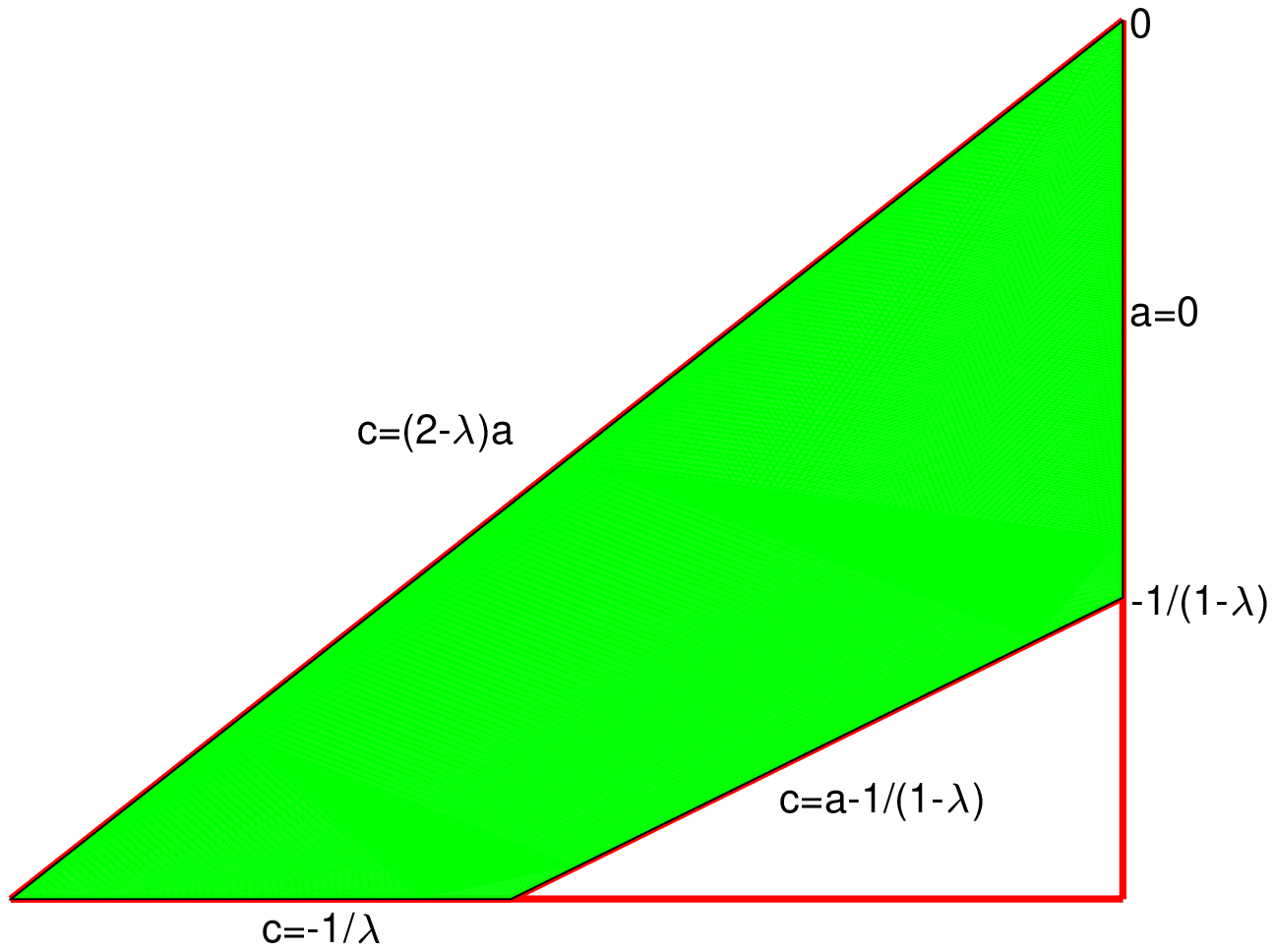}\\
(a) $\lambda \in (0, 1-\frac{\sqrt{2}}{2}]$ &(b) $\lambda \in (1-\frac{\sqrt{2}}{2}, \frac{1}{2})$
\end{tabular}
\caption{The plots of $\mathcal{S}$ when (a) $\lambda \in (0, 1-\frac{\sqrt{2}}{2}]$ and (b) $\lambda \in (1-\frac{\sqrt{2}}{2}, \frac{1}{2})$.}
\label{fig:2}
\end{figure}

\begin{theorem}\label{thm:case2}
Let $\lambda \in (0,1/2)$ and $\gamma \in (0,1)$. If the pair $(a^*,c^*)$ is the solution to problem~\eqref{def:P}, where $\mathcal{S}$ is given in \eqref{eq:S-case2}, then the following statements hold.
\begin{enumerate}
\item[(i)] If $\lambda \in (0, 1-\frac{\sqrt{2}}{2}]$, the optimal solution to the optimization problem is 
$$(
a^*,c^*)=\left(-\frac{1}{(1-\lambda)^2}, -\frac{2-\lambda}{(1-\lambda)^2}\right)
$$ 
and the optimal is $P(a^*,c^*)=\frac{2\lambda \gamma}{1-\lambda}-\frac{(\lambda \gamma)^2}{(1-\lambda)^2}$.

\item[(ii)] If $\lambda \in (1-\frac{\sqrt{2}}{2}, \frac{1}{2})$, the optimal solution to the optimization  problem  is 
$$
(a^*,c^*)=\left(\frac{2\lambda-1}{(1-\lambda)\lambda}, -\frac{1}{\lambda}\right)
$$ 
and the optimal is $P(a^*,c^*)=\frac{\lambda(2\lambda-1)\gamma(1+\gamma)}{1-\lambda}+\gamma$.
\end{enumerate}
\end{theorem}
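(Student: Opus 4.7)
The plan is to exploit the fact that $P(a,c) = \lambda^2\gamma(1+\gamma) a - \lambda\gamma c$ is affine in $(a,c)$, so its maximum over the closed convex polygonal region $\overline{\mathcal{S}}$ must be attained at a vertex (this is the trivial linear case of the maximum principle invoked just before the theorem). The proof therefore reduces to: (a) enumerating the vertices of $\overline{\mathcal{S}}$ in each of the two regimes, (b) substituting each vertex into $P$, and (c) comparing the finitely many resulting values. The closure is understood with $a \le 0$, where $a = 0$ corresponds to the degenerate degree-$1$ limit.

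For the geometry, the relevant boundary of $\overline{\mathcal{S}}$ consists of the three lines $c=(2-\lambda)a$, $c=-1/\lambda$, and $c=a-1/(1-\lambda)$, together with the segment $a=0$. A direct calculation shows that these three lines are concurrent precisely when $2\lambda^2-4\lambda+1=0$, i.e., $\lambda=1-\sqrt{2}/2$, which is exactly the threshold separating the two cases. For $\lambda\in(0,1-\sqrt{2}/2]$ the intersection of the first and third lines satisfies $c\ge -1/\lambda$, so the second constraint is inactive and $\overline{\mathcal{S}}$ is a triangle with vertices
$$
\Bigl(-\tfrac{1}{(1-\lambda)^2},\,-\tfrac{2-\lambda}{(1-\lambda)^2}\Bigr),\qquad (0,0),\qquad \Bigl(0,\,-\tfrac{1}{1-\lambda}\Bigr).
$$
For $\lambda\in(1-\sqrt{2}/2,1/2)$ that same intersection has $c<-1/\lambda$, so it is cut off, and $\overline{\mathcal{S}}$ becomes a quadrilateral whose two new vertices, coming from $c=-1/\lambda$ meeting the other two lines, are
$$
\Bigl(-\tfrac{1}{\lambda(2-\lambda)},\,-\tfrac{1}{\lambda}\Bigr)\quad\text{and}\quad \Bigl(\tfrac{2\lambda-1}{\lambda(1-\lambda)},\,-\tfrac{1}{\lambda}\Bigr),
$$
joined to $(0,0)$ and $(0,-1/(1-\lambda))$ along the lines $c=(2-\lambda)a$ and $c=a-1/(1-\lambda)$.

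Substituting these vertices into $P$ yields in case (i) the three values $\tfrac{2\lambda\gamma}{1-\lambda}-\tfrac{(\lambda\gamma)^2}{(1-\lambda)^2}$, $\,0$, and $\tfrac{\lambda\gamma}{1-\lambda}=\alpha_1(\lambda,\gamma)$. The hypothesis $\lambda(1+\gamma)<1$, equivalent to $\lambda\gamma<1-\lambda$, lets me write the difference between the first and third as $(\lambda\gamma/(1-\lambda))\cdot(1-\lambda-\lambda\gamma)/(1-\lambda)>0$, so the first value wins, proving (i). For case (ii) the four values at the vertices include the two boundary values $0$ and $\lambda\gamma/(1-\lambda)$ from before, plus $\gamma\bigl(2(1-\lambda)-\lambda\gamma\bigr)/(2-\lambda)$ at the Line-$1$--Line-$2$ vertex and $\lambda(2\lambda-1)\gamma(1+\gamma)/(1-\lambda)+\gamma$ at the Line-$2$--Line-$3$ vertex. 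Pairwise differences factor to produce the quantities $1-\lambda(1+\gamma)$ and $-(2\lambda^2-4\lambda+1)$, both of which are positive in the range $\lambda\in(1-\sqrt{2}/2,1/2)$, and these sign checks identify the Line-$2$--Line-$3$ vertex as the maximizer with the claimed value.

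The main obstacle is not conceptual but a careful case-by-case sign analysis: several pairwise comparisons of $P$-values at distinct vertices need to come out with the correct sign, and each comparison reduces to verifying positivity of a rational expression in $\lambda$ and $\gamma$ that factors through one of the two auxiliary inequalities $\lambda(1+\gamma)<1$ or $2\lambda^2-4\lambda+1<0$ (the latter holding exactly on $(1-\sqrt{2}/2,1+\sqrt{2}/2)$). Care must also be taken that the selected vertex is genuinely feasible in the case at hand, in particular ruling out the case-(i) maximizer from case (ii) because it violates $c\ge -1/\lambda$ whenever $\lambda>1-\sqrt{2}/2$.
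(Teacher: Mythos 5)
Your proof is correct and follows essentially the same route as the paper's: both reduce the problem to evaluating the linear objective $P$ at the corner points of the polygonal region $\mathcal{S}$ (the paper via the maximum principle applied segment-by-segment, you via the vertex principle for a linear objective on a convex polygon) and then comparing the finitely many resulting values, with the same concurrency threshold $2\lambda^2-4\lambda+1=0$ separating the triangle and quadrilateral cases. Note that you implicitly, and correctly, read the last constraint in \eqref{eq:S-case2} as $c \ge a - \frac{1}{1-\lambda}$ (as the paper's own proof does in its parametrization of $S_2$), rather than the $c \ge a - \frac{1}{1+\lambda}$ printed there.
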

\begin{proof}
(i) $\lambda \in (0, 1-\frac{\sqrt{2}}{2}]$: As we can see from Figure~\ref{fig:2}(a), the boundary of $\mathcal{S}$ consists of three line segments: $S_1:=\{(0,c): c \in [0, -1/(1-\lambda]\}$, $S_2:=\{(a,c): a \in [-1/(1-\lambda)^2,0], \; c=a-1/(1-\lambda)\}$, and
$S_3:=\{(a,c): a \in [-1/(1-\lambda)^2,0], \; c=(2-\lambda)a\}$. It can be directly verified that $P$ has the maximum value $\frac{\lambda\gamma}{1-\lambda}$ on $S_1$ at the point $(0, -1/(1-\lambda)$ and the maximum value of $\frac{2\lambda \gamma}{1-\lambda}-\frac{(\lambda \gamma)^2}{(1-\lambda)^2}$ on both $S_2$ and $S_3$ at the point $(-1/(1-\lambda)^2, -(2-\lambda)/(1-\lambda)^2)$. Since $\frac{\lambda \gamma}{1-\lambda}<1$, then $\frac{2\lambda \gamma}{1-\lambda}-\frac{(\lambda \gamma)^2}{(1-\lambda)^2}>\frac{\lambda\gamma}{1-\lambda}$, thus, the statement holds.

(ii) $\lambda \in (1-\frac{\sqrt{2}}{2}, \frac{1}{2})$:  As we can see from Figure~\ref{fig:2}(b), the boundary of $\mathcal{S}$ consists of four line segments: $S_1:=\{(0,c): c \in [0, -1/(1-\lambda]\}$, $S_2:=\{(a,c): a \in [\frac{2\lambda-1}{(1-\lambda)\lambda},0], \; c=a-1/(1-\lambda)\}$, $S_3:=\{(a,c): a \in [-\frac{1}{\lambda(2-\lambda)}, \frac{2\lambda-1}{(1-\lambda)\lambda}], \; c=-\frac{1}{\lambda}\}$, and $S_4:=\{(a,c): a \in [-\frac{1}{\lambda(2-\lambda)}, 0], \; c=(2-\lambda)a\}$. It can be checked easily that $P$ attains the maximal value $\frac{\lambda(2\lambda-1)\gamma(1+\gamma)}{1-\lambda}+\gamma$ on $S_1 \cup S_2\cup S_3 \cup S_4$ at the point $\left(\frac{2\lambda-1}{(1-\lambda)\lambda}, -\frac{1}{\lambda}\right)$.
\end{proof}
Typical graphs of the $(\lambda,\gamma)$-admissible polynomial $p(x)=(x-\lambda)(a^*x-c^*)$ for $(a^*,c^*)$ given in Theorem~\ref{thm:case2} are depicted in  Figure~\ref{fig:AAcase-polynomial}. For the purpose of comparison, the graph of the optimal $(\lambda,\gamma)$-admissible linear polynomial $p^{\lambda,\gamma}_1$ is included in Figure~\ref{fig:AAcase-polynomial} as well. We remark that if we replace the pair $(a,c)$ in \eqref{eq:deg2} by each pair $(a^*,c^*)$ from Theorem~\ref{thm:case2}, then we always have $p(1)=(1-\lambda)(a^*-c^*)=1$.

\begin{figure}[htpb]
 \centering
 \begin{tabular}{cc}
 \includegraphics[scale=0.40]{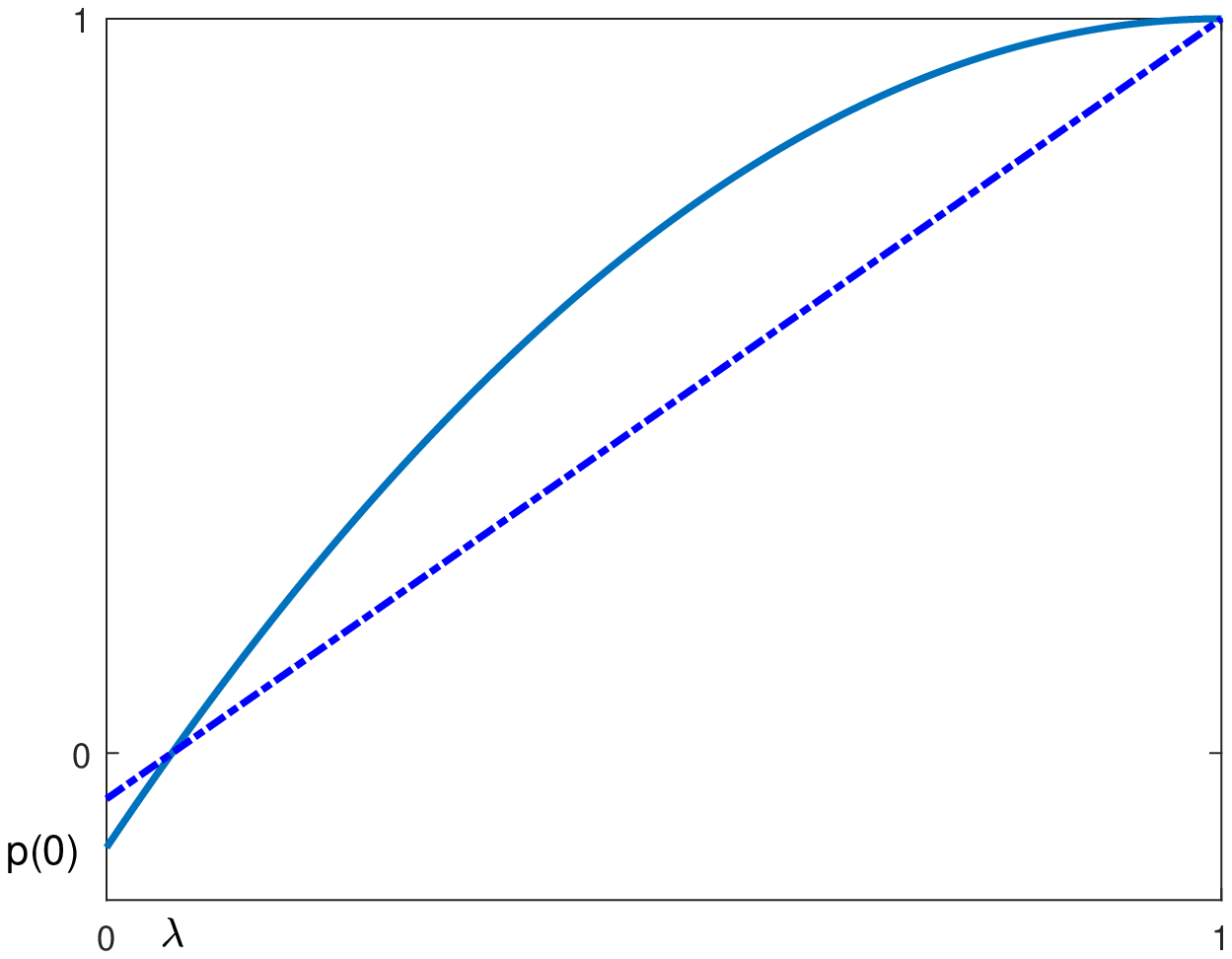}&
\includegraphics[scale=0.40]{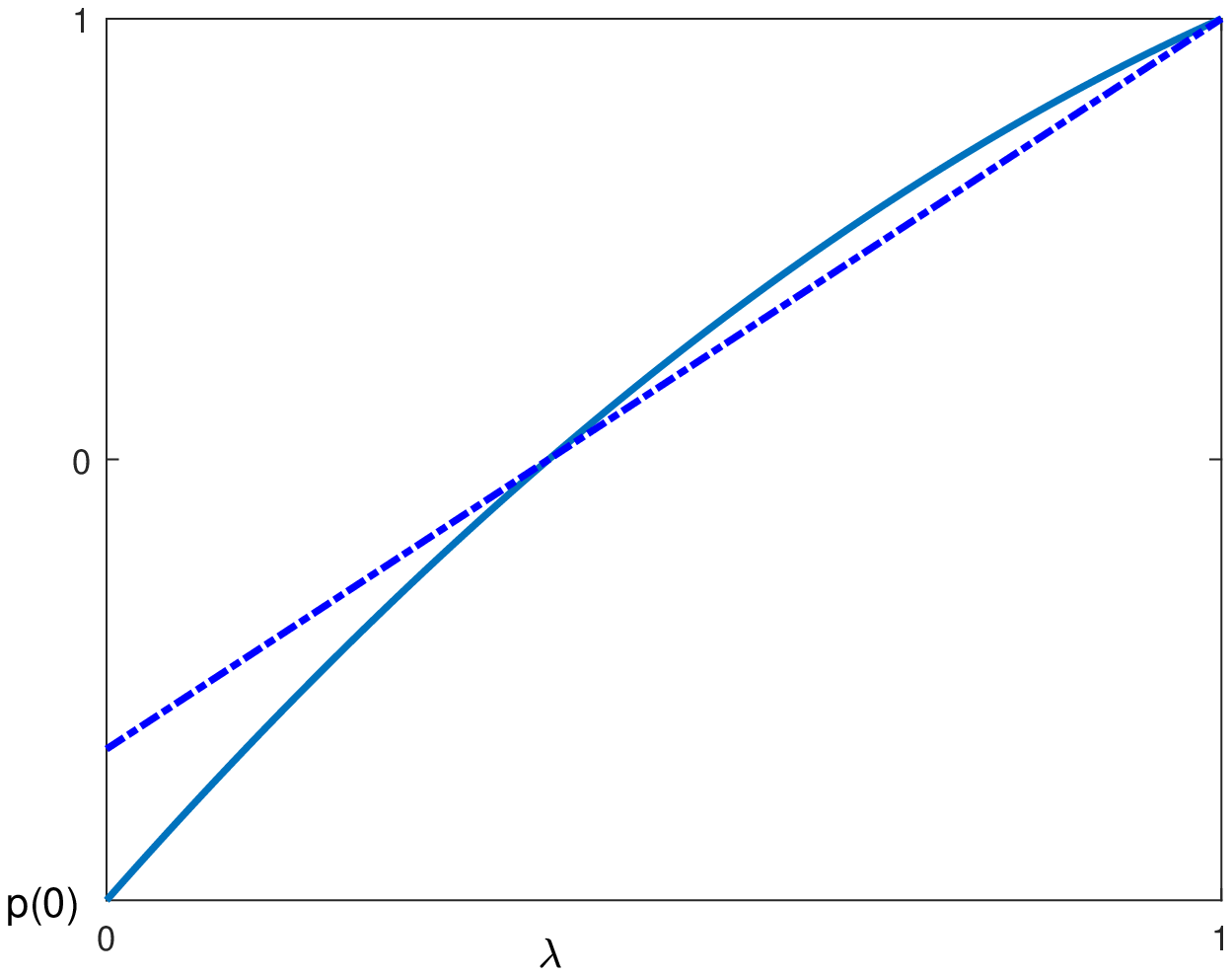}\\
(a)&(b)
\end{tabular}
\caption{The plots of the function $p(x)=(x-\lambda)(a^*x-c^*)$ (solid line), where the pair $(a^*, c^*)$ is given in Theorem~\ref{thm:case2}, and the optimal $(\lambda,\gamma)$-admissible linear polynomial $p^{\lambda,\gamma}_1(x)=\frac{x-\lambda}{1-\lambda}$ on the interval $[0,1]$ for (a) $\lambda \in (0, 1-\frac{\sqrt{2}}{2}]$, $\gamma \in (0,1)$; and (b) $\lambda \in [1-\frac{\sqrt{2}}{2}, \frac{1}{2}]$, $\gamma \in (0,1)$.}
\label{fig:AAcase-polynomial}
\end{figure}

\subsection{The Case of $\frac{1}{2}(\lambda+\frac{c}{a})< 1$}\label{subsec:2}
Recall the $(\lambda, \gamma)$-admissible polynomial $p$ in \eqref{eq:deg2} with $a < 0$ and $\lambda \in (0, 1/2)$.
The case $\frac{1}{2}(\lambda+\frac{c}{a})< 1$ says $c> (2-\lambda)a$. The condition $-1 \le p(x) \le p((1-\gamma)\lambda) <0$ for all $x\in [0, (1-\gamma)\lambda]$ implies $p(0)\ge -1$ which is equivalent to $c \ge -\frac{1}{\lambda}$. The condition $0 \le p((1+\gamma)\lambda)\le p(x) \le 1$ for all $x\in [(1+\gamma)\lambda,1]$ indicates that both $p(1) \ge p(\lambda(1+\gamma))$ and $p\left(\frac{1}{2}(\lambda+\frac{c}{a})\right) \le 1$ hold.  The inequality $p(1) \ge p(\lambda(1+\gamma))$ holds if and only if $c \le (1+\lambda \gamma)$. The inequality $p\left(\frac{1}{2}(\lambda+\frac{c}{a})\right) \le 1$ leads to $c \ge \lambda a -2\sqrt{-a}$. Putting all these together, the set $\mathcal{S}$, which all possible pairs of $(a,c)$ belong to, is characterized as follows:
\begin{equation}\label{eq:S-case1}
\mathcal{S} = \left\{(a,c): a < 0, \; c > (2-\lambda)a, \;  c \ge -\frac{1}{\lambda}, \; c \le (1+\lambda \gamma), \; c \ge \lambda a -2\sqrt{-a}\right\}.
\end{equation}
Clearly, the set $\mathcal{S}$ is bounded since it is  a subset of the triangular region
\[\left\{(a,c): a < 0, \; c > (2-\lambda)a, \;  c \ge -\frac{1}{\lambda}, \; c \le (1+\lambda \gamma)\right\}.\]
To find the maximum value of $P$ in \eqref{def:P}, we need to identify the boundary of $\mathcal{S}$.

The boundary of $\mathcal{S}$ can be seen easily if we know the detailed geometric structures of this set. To this end, we first define three lines: $L_1$, $L_2$, and $L_3$ in the $ac$-plane via $c=(2-\lambda)a$, $c=-\frac{1}{\lambda}$, and $c=(1+\lambda \gamma)a$, respectively. We denote the value of $a$ where the lines $L_1$ and $L_3$ intersect with $L_2$ as $a_{(L_1,L_2)}$ and $a_{(L_3,L_2)}$, respectively. Then, $a_{(L_1,L_2)}=-\frac{1}{\lambda(2-\lambda)}$ and $a_{(L_3,L_2)}=-\frac{1}{\lambda(1+\lambda\gamma)}$. Note that $a_{(L_1,L_2)}> a_{(L_3,L_2)}$ for all $\lambda \in (0,1/2)$ and $\gamma \in (0,1)$. Let $C$ be the curve in $ac$-plane defined by the function $f(a)=\lambda a -2\sqrt{-a}$ which is increasing and convex on $(-\infty, 0]$. This curve intersects with line $L_2$ at the value $a_{(C,L_2)}=\frac{2\sqrt{2}-3}{\lambda^2}$. The geometric structure of $\mathcal{S}$ is dependent on the value of $a_{(C,L_2)}$ in relation to both $a_{(L_1,L_2)}$ and $a_{(L_3,L_2)}$. Thus there are three separate cases to consider:
\[a_{(C,L_2)} \leq a_{(L_3,L_2)}, \quad a_{(L_3,L_2)} \le a_{(C,L_2)} \le a_{(L_1,L_2)},\quad \text{and } a_{(L_1,L_2)}\le a_{(C,L_2)}.\]
These inequalities require $\lambda \in (0,\frac{1}{3+2\sqrt{2}-\gamma}]$, $\lambda \in [\frac{1}{3+2\sqrt{2}-\gamma}, 1-\frac{\sqrt{2}}{2}]$, and $\lambda \in [1-\frac{\sqrt{2}}{2}, \frac{1}{2})$, respectively. The geometric structures of $\mathcal{S}$ for three different cases are plotted in Figure~\ref{fig:1}.

\begin{figure}[htpb]
 \centering
 \begin{tabular}{ccc}
 \includegraphics[scale=0.40]{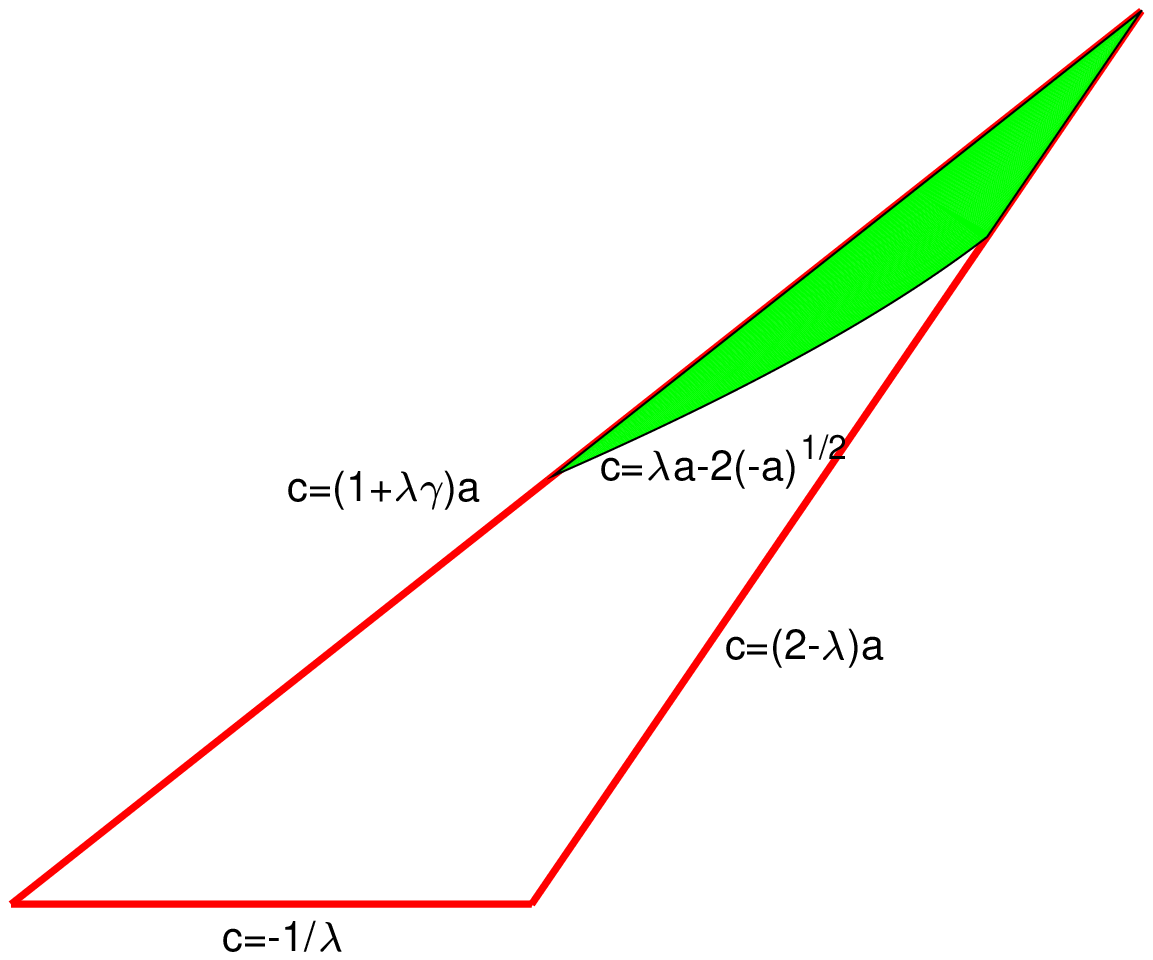}&
\includegraphics[scale=0.40]{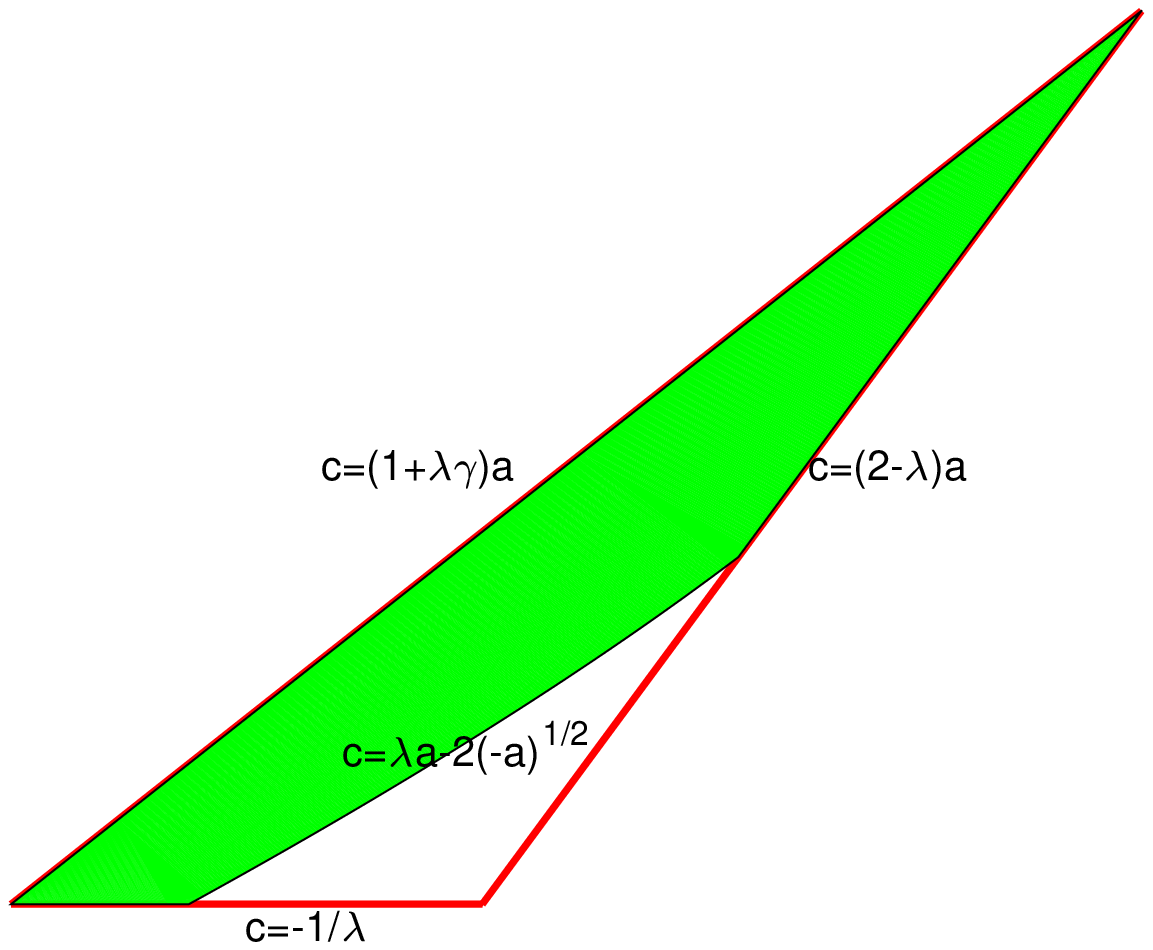}&
\includegraphics[scale=0.40]{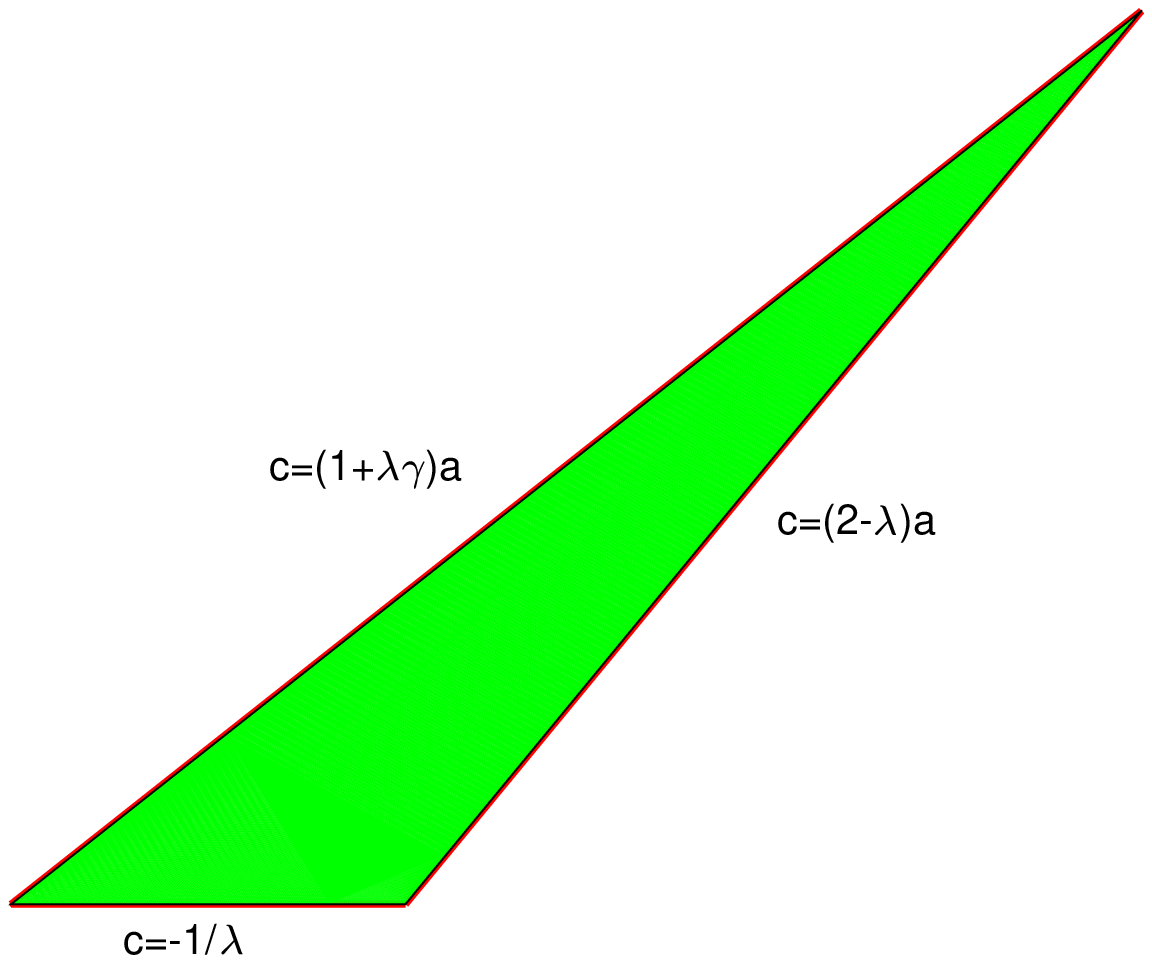}\\
(a) $\lambda \in (0,\frac{1}{3+2\sqrt{2}-\gamma}]$ & (b) $\lambda \in [\frac{1}{3+2\sqrt{2}-\gamma}, 1-\frac{\sqrt{2}}{2}]$; & (c) $\lambda \in [1-\frac{\sqrt{2}}{2}, \frac{1}{2})$
\end{tabular}
\caption{The plots of $\mathcal{S}$ when (a) $\lambda \in (0,\frac{1}{3+2\sqrt{2}-\gamma}]$; (b) $\lambda \in [\frac{1}{3+2\sqrt{2}-\gamma}, 1-\frac{\sqrt{2}}{2}]$;  and (c) $\lambda \in [1-\frac{\sqrt{2}}{2}, \frac{1}{2})$.}
\label{fig:1}
\end{figure}

\begin{theorem} \label{thm:case1}
Let $\lambda \in (0,1/2)$ and $\gamma \in (0,1)$. If the pair $(a^*,c^*)$ is the solution to problem~\eqref{def:P}, where $\mathcal{S}$ is given in \eqref{eq:S-case1}, then the following statements hold.
\begin{enumerate}
\item[(i)] If $\lambda \in (0,\frac{1}{3+2\sqrt{2}-\gamma}]$, the optimal solution to the optimization problem is 
$$
    (a^*,c^*)=\left(\frac{-4}{(1-\lambda+\lambda\gamma)^2}, \frac{-4(1+\lambda\gamma)}{(1-\lambda+\lambda\gamma)^2}\right)
$$ 
and the optimal is $P(a^*,c^*)=\frac{4\gamma\lambda(1-\lambda)}{(1+\lambda\gamma-\lambda)^2}$.

\item[(ii)] If $\lambda \in [\frac{1}{3+2\sqrt{2}-\gamma}, 1-\frac{\sqrt{2}}{2}]$, the optimal solution to the optimization  problem is 
$$
(a^*,c^*)=\left(\frac{-1}{(3+2\sqrt{2})\lambda^2}, -\frac{1}{\lambda}\right)
$$ 
and the optimal is $P(a^*,c^*)=\frac{\gamma}{ (3+2\sqrt{2})}(2+2\sqrt{2}-\gamma )$.

\item[(iii)] If $\lambda \in [1-\frac{\sqrt{2}}{2}, \frac{1}{2})$, the optimal solution to the optimization  problem is
$$
    (a^*,c^*)=\left(\frac{-1}{(2-\lambda)\lambda}, -\frac{1}{\lambda}\right)
$$ 
and the optimal is $P(a^*,c^*)=\frac{\gamma(2-2\lambda-\lambda\gamma )}{(2-\lambda)}$.
\end{enumerate}
\end{theorem}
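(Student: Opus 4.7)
My approach parallels the proof of Theorem~\ref{thm:case2}. Since $P(a,c) = \lambda^2\gamma(1+\gamma)\,a - \lambda\gamma\,c$ is linear, hence harmonic, in $(a,c)$ and the feasible set $\mathcal{S}$ in \eqref{eq:S-case1} is compact, its maximum is attained on $\partial \mathcal{S}$. Moreover, $\mathcal{S}$ is convex: it is the intersection of three half-planes determined by $L_1, L_2, L_3$ with the epigraph of the strictly convex function $f(a) = \lambda a - 2\sqrt{-a}$ on $(-\infty,0)$. Linearity of $P$ then upgrades the conclusion: the maximum is attained at an extreme point of $\mathcal{S}$.

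To reduce the candidates to a finite set, I restrict $P$ to the arc $C: c = f(a)$, obtaining
$$
\widetilde P(a) = \lambda^2\gamma^2\,a + 2\lambda\gamma\sqrt{-a}.
$$
A direct calculation gives $\widetilde P''(a) = -\tfrac{\lambda\gamma}{2}(-a)^{-3/2} < 0$, so $\widetilde P$ is strictly concave on $(-\infty,0)$ with unique maximum at $a_{\mathrm{crit}} = -1/(\lambda\gamma)^2$. In each of the three regimes, I verify that $a_{\mathrm{crit}}$ lies strictly to the left of the leftmost point of whatever arc of $C$ appears on $\partial\mathcal{S}$: in case (i) this reduces to $\lambda(1+\gamma) < 1$, and in case (ii) to $(3-2\sqrt{2})\gamma^2 < 1$, both automatic throughout the stated parameter ranges. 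Consequently $\widetilde P$ is monotonically decreasing along the arc, so its maximum there is attained at the arc's left endpoint -- which is already a polygonal vertex of $\mathcal{S}$. Hence the search for the maximum of $P$ reduces to a finite comparison over polygonal vertices.

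The remaining task is case-by-case enumeration guided by Figure~\ref{fig:1}. In case (i) the vertices are $(0,0)$, $L_1 \cap C$, and $L_3 \cap C$; in case (ii) they are $(0,0)$, $L_1 \cap C$, $C \cap L_2$, and $L_2 \cap L_3$; in case (iii) the arc is absent and the vertices are $(0,0)$, $L_1 \cap L_2$, and $L_2 \cap L_3$. After computing $P$ at each vertex I identify the largest as the claimed $(a^*,c^*)$. The critical pairwise comparison in case (i), after the substitution $u = 1-\lambda$, $v = \lambda\gamma$, reduces to the factorization $2u^3 - 3u^2 v + v^3 = (u-v)^2(2u+v) \ge 0$; the comparisons in cases (ii) and (iii) are similarly elementary, with the algebra simplified by the identity $3+2\sqrt{2} = (1+\sqrt{2})^2$.

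The principal obstacle is the geometric bookkeeping: one must identify exactly which of $L_1, L_2, L_3, C$ contribute to $\partial\mathcal{S}$ in each parameter regime (the thresholds $\lambda = 1/(3+2\sqrt{2}-\gamma)$ and $\lambda = 1-\sqrt{2}/2$ are precisely where the vertex structure of $\mathcal{S}$ changes), and then verify the pairwise $P$-inequalities in each subcase. Once the vertex list is correct, the remaining algebra in each regime is routine.
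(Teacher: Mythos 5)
Your proof is correct and follows essentially the same route as the paper's: since $P$ is linear (hence harmonic) and $\mathcal{S}$ is compact, the maximum lies on the boundary, which is then examined piece by piece using the segment/arc decomposition shown in Figure~\ref{fig:1}. The paper merely asserts the boundary verification, whereas you supply the details it omits --- the concavity of $P$ restricted to the curve $C$, the location of its critical point at $a=-1/(\lambda\gamma)^2$ lying to the left of every relevant arc, and the factorization $2u^3-3u^2v+v^3=(u-v)^2(2u+v)$ for the key comparison in case (i) --- all of which check out.
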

\begin{proof}
(i) $\lambda \in (0,\frac{1}{3+2\sqrt{2}-\gamma}]$: As we can see from Figure~\ref{fig:1}(a), the boundary of $\mathcal{S}$ consists of three segments:
\begin{eqnarray*}
S_1&:=&\left\{(a,(2-\lambda)a): a \in [-\frac{1}{(1-\lambda)^2},0]\right\}, \\
S_2&:=&\left\{(a,\lambda a -2\sqrt{-a}): a \in [-\frac{4}{(1-\lambda+\lambda\gamma)^2},-\frac{1}{(1-\lambda)^2}]\right\},\\
S_3&:=&\left\{(a,(1+\lambda \gamma)a): a \in [-\frac{4}{(1-\lambda+\lambda\gamma)^2},0]\right\}.
\end{eqnarray*}
By verifying the maximum value of $P$ on $S_1$, $S_2$, and $S_3$, we know the conclusion of part 1 holds.

(ii) $\lambda \in [\frac{1}{3+2\sqrt{2}-\gamma}, 1-\frac{\sqrt{2}}{2}]$:  From Figure ~\ref{fig:1}(b), the boundary of $\mathcal{S}$ consists of four segments:
\begin{eqnarray*}
S_1&:=&\left\{(a,(2-\lambda)a): a \in [-\frac{1}{(1-\lambda)^2},0]\right\},\\
S_2&:=&\left\{(a,\lambda a -2\sqrt{-a}): a \in [\frac{2\sqrt{2}-3}{\lambda^2},-\frac{1}{(1-\lambda)^2}]\right\},\\
S_3&:=&\left\{(a,-\frac{1}{\lambda}): a \in [-\frac{1}{\lambda(1+\lambda\gamma)},\frac{2\sqrt{2}-3}{\lambda^2}]\right\},\\
S_4&:=&\left\{(a,(1+\lambda \gamma)a): a \in [-\frac{1}{\lambda(1+\lambda\gamma)},0]\right\}.
\end{eqnarray*}
By verifying the maximum value of $P$ on $S_1$, $S_2$, $S_3$, and $S_4$, we know the conclusion of part 2 holds.

(iii) $\lambda \in [1-\frac{\sqrt{2}}{2}, \frac{1}{2})$: As we can see from Figure~\ref{fig:1}(c), the boundary of $\mathcal{S}$ consists of three segments: \begin{eqnarray*}
S_1&:=&\left\{(a,(2-\lambda)a): a \in [-\frac{1}{\lambda(2-\lambda)},0]\right\},\\
S_2&:=&\left\{(a,-\frac{1}{\lambda}): a \in [-\frac{1}{\lambda(1+\lambda\gamma)},-\frac{1}{\lambda(2-\lambda)}]\right\},\\
S_3&:=&\left\{(a,(1+\lambda \gamma)a): a \in [-\frac{1}{\lambda(1+\lambda\gamma)},0]\right\}.
\end{eqnarray*}
By verifying the maximum value of $P$ on $S_1$, $S_2$, and $S_3$, we know the conclusion of part 3 holds.
\end{proof}

We remark that if we replace the pair $(a,c)$ in \eqref{eq:deg2} by each first two pair $(a^*,c^*)$ from Theorem~\ref{thm:case1}, then we always have $p(\frac{1}{2}(\lambda+\frac{c^*}{a^*}))=1$. But for the third pair $(a^*,c^*)=\left(\frac{-1}{(2-\lambda)\lambda}, -\frac{1}{\lambda}\right)$, we have $p(\frac{1}{2}(\lambda+\frac{c^*}{a^*}))=\frac{(1-\lambda)^2}{\lambda(2-\lambda)}$ which is strictly less than $1$ for $\lambda \in (1-\frac{\sqrt{2}}{2}, \frac{1}{2})$. Typical graphs of the $(\lambda,\gamma)$-admissible polynomial $p(x)=(x-\lambda)(a^*x-c^*)$ for $(a^*,c^*)$ given in Theorem~\ref{thm:case1} are depicted in  Figure~\ref{fig:acase-polynomial}. For the purpose of comparison, the graph of the optimal $(\lambda,\gamma)$-admissible linear polynomial $p^{\lambda,\gamma}_1$ is included in Figure~\ref{fig:acase-polynomial} as well.

\begin{figure}[htpb]
 \centering
 \begin{tabular}{ccc}
\includegraphics[scale=0.40]{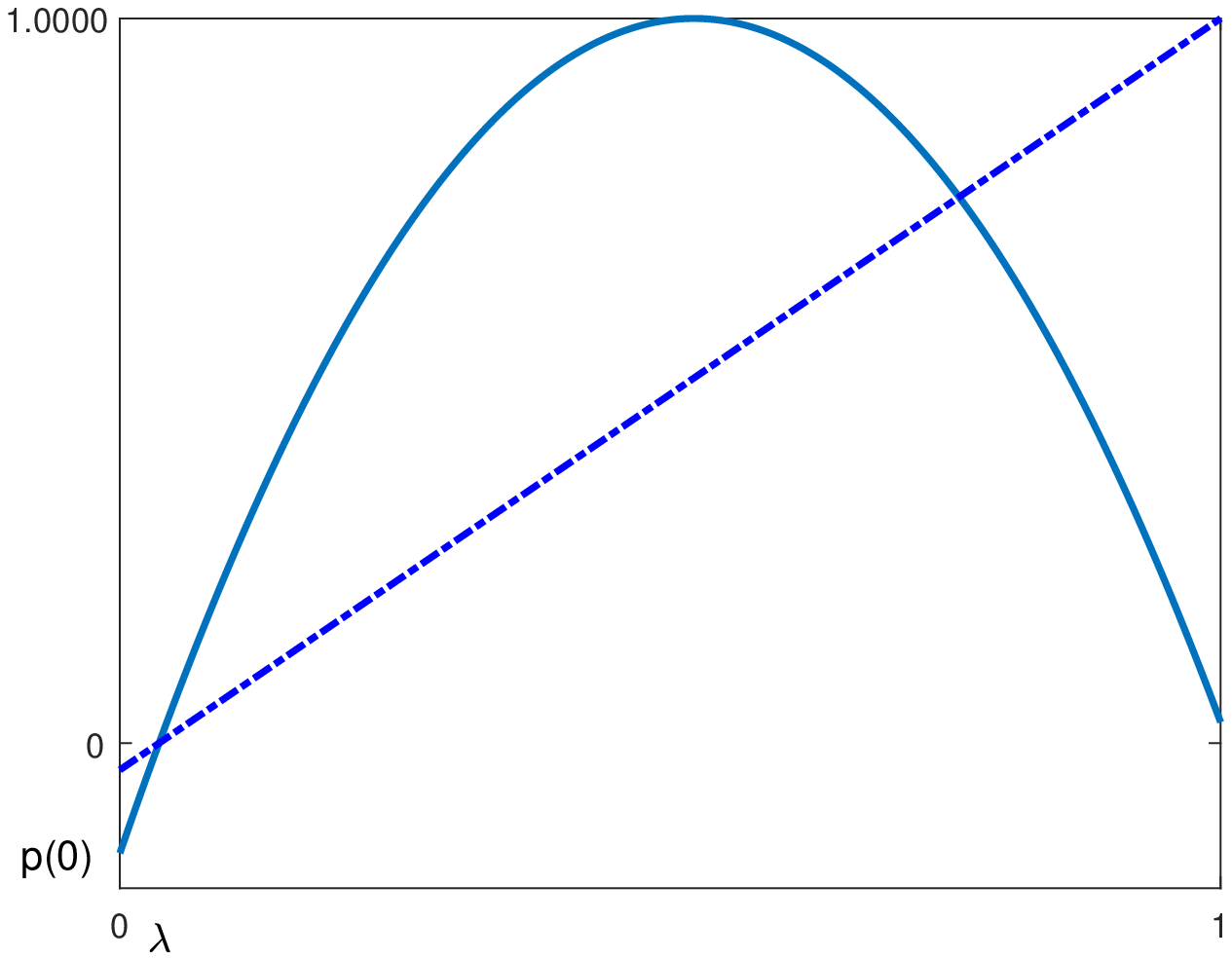}&
\includegraphics[scale=0.40]{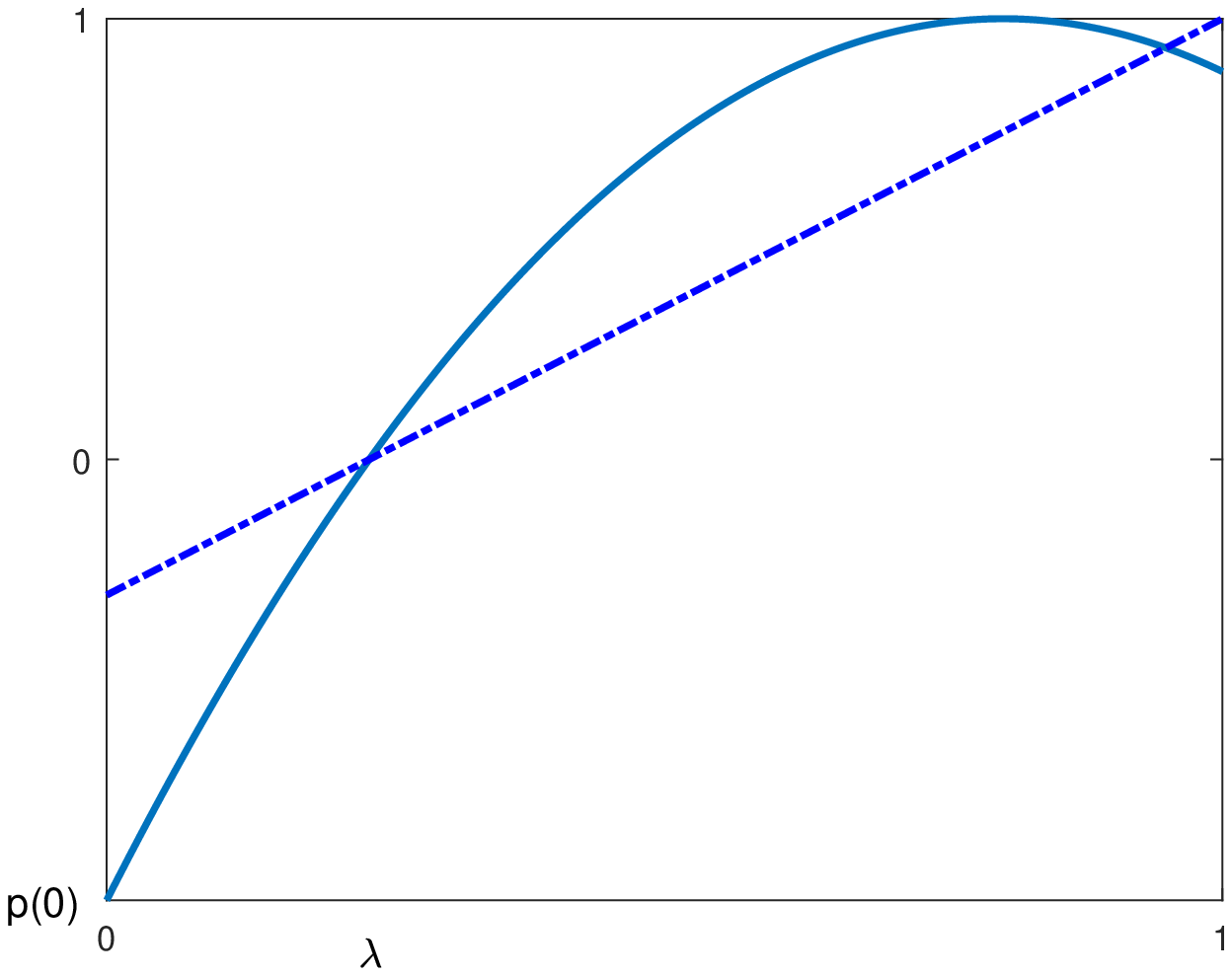}&
\includegraphics[scale=0.40]{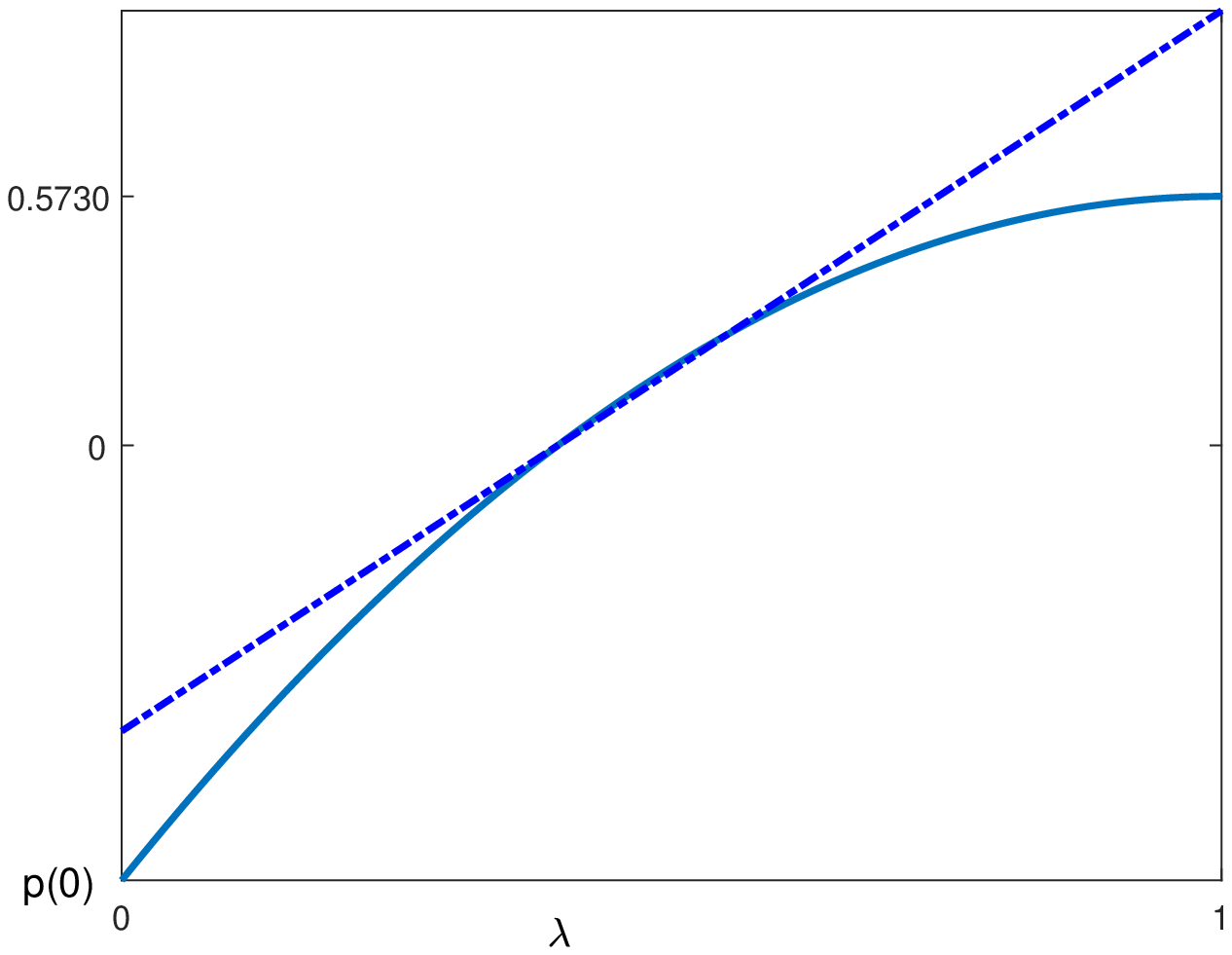}\\
(a)&(b)&(c)
\end{tabular}
\caption{The plots of the function $p(x)=(x-\lambda)(a^*x-c^*)$ (solid line), where the pair $(a^*, c^*)$ is given in Theorem~\ref{thm:case1},  and the optimal $(\lambda,\gamma)$-admissible linear polynomial $p^{\lambda,\gamma}_1(x)=\frac{x-\lambda}{1-\lambda}$ on the interval $[0,1]$ for (a) $\lambda \in (0,\frac{1}{3+2\sqrt{2}-\gamma}]$, $\gamma \in (0,1)$; (b) $\lambda \in [\frac{1}{3+2\sqrt{2}-\gamma}, 1-\frac{\sqrt{2}}{2}]$, $\gamma \in (0,1)$ and (c) $\lambda \in [1-\frac{\sqrt{2}}{2}, \frac{1}{2})$, $\gamma \in (0,1)$.}
\label{fig:acase-polynomial}
\end{figure}

\subsection{The Optimal Admissible Quadratic Polynomials}\label{subsec:3}

In the previous two subsections, for each value of $\lambda$ in the interval $(0,\frac{1}{2})$ we found the solution $(a^*, c^*)$ that can maximize the objective  function $P$ in \eqref{def:P} under the constraints $\frac{1}{2}(\lambda+\frac{c}{a})\ge 1$ and $\frac{1}{2}(\lambda+\frac{c}{a})< 1$. By comparing the corresponding optimal values of $P$, we are able to construct the optimal admissible quadratic polynomial for each given pair $(\lambda, \gamma)$ with $\lambda \in (0, 1/2)$.

The pair $(a^*, c^*)$ in Theorem~\ref{thm:case2} and Theorem~\ref{thm:case1} can be viewed as a function of $\lambda$ and its explicit form changes according to the interval which $\lambda$ belongs to. Specifically, the form of $(a^*, c^*)$ from Theorem~\ref{thm:case2} changes at $1-\frac{1}{2}\sqrt{2}$ while the form of $(a^*, c^*)$ in Theorem~\ref{thm:case1} changes at $\frac{1}{3+2\sqrt{2}-\gamma}$ and $1-\frac{1}{2}\sqrt{2}$. We, therefore, compare the values of $P(a^*,c^*)$ in Theorem~\ref{thm:case2} and Theorem~\ref{thm:case1} for $\lambda$ in three different intervals, namely $(0,\frac{1}{3+2\sqrt{2}-\gamma}]$,  $[\frac{1}{3+2\sqrt{2}-\gamma}, 1-\frac{\sqrt{2}}{2}]$, and $[1-\frac{\sqrt{2}}{2}, \frac{1}{2})$.

\begin{proposition}\label{P2>P1}
For $\lambda \in (0,1/2)$ and $\gamma \in (0,1)$, define
$$
Q(\lambda)=P(a_2^*,c_2^*)-P(a_1^*, c_1^*),
$$
where $(a_1^*, c_1^*)$ and $(a_2^*,c_2^*)$, from Theorem~\ref{thm:case2} and Theorem~\ref{thm:case1} correspondingly, are the optimal solutions maximizing the objective function $P$ in \eqref{def:P}. Then, we have
$$
Q(\lambda)\left\{
            \begin{array}{ll}
              >0, & \hbox{if $\lambda \in (0,\frac{1}{3+2\sqrt{2}-\gamma}]$;} \\
              \ge 0, & \hbox{if $\lambda \in [\frac{1}{3+2\sqrt{2}-\gamma}, 1-\frac{\sqrt{2}}{2}]$;} \\
              \le 0, & \hbox{if $\lambda \in [1-\frac{\sqrt{2}}{2}, \frac{1}{2})$.}
            \end{array}
          \right.
$$
\end{proposition}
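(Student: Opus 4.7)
My plan is to split the proof according to the three intervals listed in the proposition, and in each case to substitute the explicit formulas from Theorem~\ref{thm:case2} and Theorem~\ref{thm:case1} into $Q(\lambda)$, collect terms over a common denominator, and read the sign off a factorization of the resulting numerator. A unifying preliminary step is the identity $2t - t^2 = 1 - (1-t)^2$, which lets me rewrite
\[
P(a_1^*,c_1^*) \;=\; \frac{2\lambda\gamma}{1-\lambda} - \frac{(\lambda\gamma)^2}{(1-\lambda)^2} \;=\; 1 - \left(\frac{1-\lambda-\lambda\gamma}{1-\lambda}\right)^{\!2}
\]
on $\lambda\in(0,1-\tfrac{\sqrt{2}}{2}]$, and will be used in Cases~1 and~2.

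For the first interval $\lambda\in(0,\tfrac{1}{3+2\sqrt{2}-\gamma}]$, I introduce the abbreviations $u=1-\lambda$ and $v=\lambda\gamma$, so that the hypothesis $\lambda(1+\gamma)<1$ reads $v<u$. After clearing denominators I expect the numerator of $Q$ to simplify to $v(2u^3-3u^2 v+v^3)$, and the cubic factor admits the clean factorization $(v-u)^2(v+2u)$, verified by noting that $v=u$ is a root. Since $v+2u>0$, $v>0$, and $(v-u)^2>0$ in the feasible region, this yields $Q(\lambda)>0$ strictly. For the second interval $\lambda\in[\tfrac{1}{3+2\sqrt{2}-\gamma},1-\tfrac{\sqrt{2}}{2}]$, I use the arithmetic identity $\tfrac{1}{3+2\sqrt{2}}=3-2\sqrt{2}=(\sqrt{2}-1)^2$ to rewrite $P(a_2^*,c_2^*) = 1-(1-(\sqrt{2}-1)\gamma)^2$. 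Combining with the rewriting above, $Q$ becomes a difference of squares $X^2 - Y^2 = (X-Y)(X+Y)$ where $X=\tfrac{1-\lambda-\lambda\gamma}{1-\lambda}$ and $Y=1-(\sqrt{2}-1)\gamma$. Both $X$ and $Y$ are positive in the feasible region (using $\lambda(1+\gamma)<1$ and $\gamma<1$), so $\mathrm{sgn}\,Q = \mathrm{sgn}(X-Y)$, and a direct computation gives $X-Y = \gamma\cdot\tfrac{\sqrt{2}(1-\lambda)-1}{1-\lambda}$, which is $\ge 0$ precisely when $\lambda\le 1-\tfrac{\sqrt{2}}{2}$.

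For the third interval $\lambda\in[1-\tfrac{\sqrt{2}}{2},\tfrac{1}{2})$, both summands admit direct simplification without the square-completion trick. After routine algebra I expect to reduce $Q$ to
\[
Q(\lambda) \;=\; \gamma\lambda(1+\gamma)\cdot\frac{2\lambda^2-4\lambda+1}{(2-\lambda)(1-\lambda)}.
\]
The quadratic $2\lambda^2-4\lambda+1$ has roots $1\pm\tfrac{\sqrt{2}}{2}$, and the interval in question lies between these roots, so the numerator is non-positive while the denominator is strictly positive, giving $Q(\lambda)\le 0$. I expect Case~2 to be the main obstacle: the two optima there arise from structurally very different boundary configurations (a corner of two linear constraints in $\mathcal{S}_1$ versus a tangency with the curve $c=\lambda a-2\sqrt{-a}$ in $\mathcal{S}_2$), so recognizing that both $P$-values admit a common $1-(1-\cdot)^2$ form, powered by the identity $3-2\sqrt{2}=(\sqrt{2}-1)^2$, is what converts a potentially tedious algebraic inequality into a transparent difference of squares.
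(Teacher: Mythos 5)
Your proposal is correct, and I verified the key algebraic claims: in Case~1 the numerator over the common denominator $u^2(u+v)^2$ is indeed $v\bigl(2u^3-3u^2v+v^3\bigr)=v(v-u)^2(v+2u)$, which is strictly positive because admissibility forces $\lambda(1+\gamma)<1$, i.e.\ $v<u$; in Case~2 the identity $(\sqrt{2}-1)(2+2\sqrt{2}-\gamma)=2-(\sqrt{2}-1)\gamma$ does give $P(a_2^*,c_2^*)=1-\bigl(1-(\sqrt{2}-1)\gamma\bigr)^2$, and $X-Y=\gamma\frac{\sqrt{2}(1-\lambda)-1}{1-\lambda}$ changes sign exactly at $\lambda=1-\frac{\sqrt{2}}{2}$; in Case~3 your expression $Q(\lambda)=\gamma\lambda(1+\gamma)\frac{2\lambda^2-4\lambda+1}{(2-\lambda)(1-\lambda)}$ is right (the paper's displayed formula there omits a factor of $\gamma$, which does not affect the sign since $2\lambda^2-4\lambda+1=2(1-\lambda)^2-1$). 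The overall skeleton is the same as the paper's --- the same three-interval split dictated by where the formulas of Theorems~\ref{thm:case2} and~\ref{thm:case1} switch, followed by direct substitution --- but your finishes in the first two cases are genuinely different. In Case~1 the paper leaves $Q$ as a sum of a product involving the linear factor $\sqrt{2}-1+(1-\sqrt{2}-\gamma)\lambda$ (whose positivity must be checked on the interval) plus a manifestly positive term, whereas your complete factorization makes positivity immediate and, as a bonus, identifies the only degenerate configuration ($v=u$, excluded by admissibility). In Case~2 the paper argues by calculus --- $Q(1-\frac{\sqrt{2}}{2})=0$ and $Q'<0$ --- while your difference-of-squares rewriting $Q=X^2-Y^2$ with $X,Y>0$ reduces the sign of $Q$ to the sign of the elementary quantity $X-Y$ and avoids differentiation altogether; this is arguably the more transparent and more easily checkable route, at the cost of having to spot the square-completion and the identity $3-2\sqrt{2}=(\sqrt{2}-1)^2$. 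Case~3 matches the paper's computation.
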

\begin{proof}\ \
(i) $\lambda \in (0,\frac{1}{3+2\sqrt{2}-\gamma}]$: In this case, $P(a_1^*,c_1^*)=\frac{\lambda\gamma(2-2\lambda -\lambda\gamma)}{(1-\lambda)^2}$ from Theorem~\ref{thm:case2} and $P(a_2^*,c_2^*)=\frac{4\gamma\lambda(1-\lambda)}{(1+\lambda\gamma-\lambda)^2}$ from Theorem~\ref{thm:case1}. After some algebraic simplifications, we have
$$
Q(\lambda)=P(a_2^*,c_2^*)-P(a_1^*, c_1^*)=2\lambda \gamma \left(\frac{\sqrt{2}}{1+\lambda\gamma-\lambda}+\frac{1}{1-\lambda}\right)\cdot \frac{\sqrt{2}-1+(1-\sqrt{2}-\gamma)\lambda}{(1+\lambda\gamma-\lambda)(1-\lambda)} + \frac{\lambda^2\gamma^2}{(1-\lambda)^2}.
$$
One can verify directly that $\sqrt{2}-1+(1-\sqrt{2}-\gamma)\lambda \ge \frac{2-\sqrt{2}\gamma}{3+2\sqrt{2}-\gamma}>0$ for $\lambda \in (0,\frac{1}{3+2\sqrt{2}-\gamma}]$. Hence, $Q(\lambda)$ is positive for all $\lambda \in (0,\frac{1}{3+2\sqrt{2}-\gamma}]$.

(ii) $\lambda \in [\frac{1}{3+2\sqrt{2}-\gamma}, 1-\frac{\sqrt{2}}{2}]$: Because $P(a_1^*,c_1^*)=\frac{\lambda\gamma(2-2\lambda -\lambda\gamma)}{(1-\lambda)^2}$ from Theorem~\ref{thm:case2} and $P(a_2^*,c_2^*)=  \frac{\gamma((2+2\sqrt{2})-\gamma)}{ (3+2\sqrt{2})} $ from Theorem~\ref{thm:case1}, we have that
$$
Q(\lambda)=\frac{\gamma((2+2\sqrt{2})-\gamma)}{ (3+2\sqrt{2})}-\frac{\lambda\gamma(2-2\lambda -\lambda\gamma)}{(1-\lambda)^2}.
$$
Immediately, we can check that $Q(1-\frac{\sqrt{2}}{2})=0$ and $Q'(\lambda)=\frac{-2\gamma}{(1-\lambda)^3}(1-\lambda-\gamma\lambda)<0$ for all $\lambda \in [\frac{1}{3+2\sqrt{2}-\gamma}, 1-\frac{\sqrt{2}}{2}]$, hence, $Q(\lambda)>0$ for $\lambda \in [\frac{1}{3+2\sqrt{2}-\gamma}, 1-\frac{\sqrt{2}}{2})$.

(iii) $\lambda \in [1-\frac{\sqrt{2}}{2}, \frac{1}{2})$: We have $P(a_1^*,c_1^*)=\frac{((2\lambda-1)(\lambda +\lambda\gamma)+1-\lambda)\gamma}{(1-\lambda)}$ Theorem~\ref{thm:case2} and $P(a_2^*,c_2^*) =\frac{\gamma(2-2\lambda-\lambda\gamma)}{(2-\lambda)}$ from Theorem~\ref{thm:case1}. Through some algebraic simplifications, we get
$$
Q(\lambda)=\frac{(1+\gamma)\lambda(2(1-\lambda)^2-1)}{(2-\lambda)(1-\lambda)}\le 0,
$$
for all $\lambda \in [1-\frac{\sqrt{2}}{2}, \frac{1}{2})$. This completes the proof.
\end{proof}

From the proof of Proposition~\ref{P2>P1}, we know that $Q(\lambda)=0$ holds at $\lambda=1-\frac{\sqrt{2}}{2}$ only.

With this preparation, we are ready to present the optimal $(\lambda, \gamma)$-admissible quadratic polynomial and the associated the optimal gap.
\begin{theorem} \label{Thm:Optimal2}
Let $\lambda \in (0,1/2)$ and $\gamma \in (0,1)$. Then the  optimal $(\lambda, \gamma)$-admissible quadratic polynomial $p_2^{\lambda,\gamma}$ is
$$
p_2^{\lambda,\gamma}(x)=\left\{
            \begin{array}{ll}
              (x-\lambda)\left(\frac{-4}{(1-\lambda+\lambda\gamma)^2} x + \frac{4(1+\lambda\gamma)}{(1-\lambda+\lambda\gamma)^2}\right), & \hbox{if $\lambda \in (0,\frac{1}{3+2\sqrt{2}-\gamma}]$;} \\
              (x-\lambda)\left(\frac{-1}{(3+2\sqrt{2})\lambda^2}x +\frac{1}{\lambda}\right), & \hbox{if $\lambda \in [\frac{1}{3+2\sqrt{2}-\gamma}, 1-\frac{\sqrt{2}}{2}]$;} \\
              (x-\lambda) \left( \frac{2\lambda-1}{(1-\lambda)\lambda} x +\frac{1}{\lambda}\right), & \hbox{if $\lambda \in [1-\frac{\sqrt{2}}{2}, \frac{1}{2})$.}
            \end{array}
          \right.
$$
and the associated optimal gap is
$$
\alpha_2(\lambda,\gamma)=\left\{
            \begin{array}{ll}
              \frac{4\gamma\lambda(1-\lambda)}{(1+\lambda\gamma-\lambda)^2}, & \hbox{if $\lambda \in (0,\frac{1}{3+2\sqrt{2}-\gamma}]$;} \\
              \frac{\gamma}{ (3+2\sqrt{2})}(2+2\sqrt{2}-\gamma), & \hbox{if $\lambda \in [\frac{1}{3+2\sqrt{2}-\gamma}, 1-\frac{\sqrt{2}}{2}]$;} \\
              \frac{\lambda(2\lambda-1)\gamma(1+\gamma)}{1-\lambda}+\gamma, & \hbox{if $\lambda \in [1-\frac{\sqrt{2}}{2}, \frac{1}{2})$.}
            \end{array}
          \right.
$$
\end{theorem}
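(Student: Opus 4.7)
The plan is to assemble the theorem directly from the three preparatory results already established in this section. I proceed in three stages.

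First I reduce to concave quadratics. Every $(\lambda,\gamma)$-admissible polynomial of degree $2$ has the form $p(x)=(x-\lambda)(ax-c)$. The convex case $a>0$ is excluded by Lemma~\ref{lemma:a>0}, which gives $\alpha(p;\lambda,\gamma)\le \alpha_1(\lambda,\gamma)$, and the degenerate case $a=0$ collapses to a $(\lambda,\gamma)$-admissible polynomial of degree $1$ with the same upper bound. By contrast, the explicit concave candidates constructed in the paragraph following Lemma~\ref{lemma:a<0}, each of which satisfies $p(1)=1$, combined with Lemma~\ref{lemma:a<0} itself, exhibit a concave admissible quadratic whose gap strictly exceeds $\alpha_1(\lambda,\gamma)$ for every $\lambda\in(0,1/2)$. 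So the optimum must be achieved in the concave regime $a<0$.

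Second, for $a<0$ the feasible set in the $(a,c)$-plane decomposes according to the position of the parabola's vertex $\tfrac{1}{2}(\lambda+c/a)$ relative to $1$, giving the two disjoint constrained subproblems treated in subsections~\ref{subsec:1} and~\ref{subsec:2}. Theorem~\ref{thm:case2} delivers the maximizer $(a_1^*,c_1^*)$ and the optimal value $P(a_1^*,c_1^*)$ under the constraint $\tfrac{1}{2}(\lambda+c/a)\ge 1$, while Theorem~\ref{thm:case1} delivers $(a_2^*,c_2^*)$ and $P(a_2^*,c_2^*)$ under $\tfrac{1}{2}(\lambda+c/a)<1$. The unconstrained maximum over $a<0$ is therefore the pointwise-in-$\lambda$ larger of the two, $\max\{P(a_1^*,c_1^*),P(a_2^*,c_2^*)\}$.

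Third, this pointwise comparison is exactly the content of Proposition~\ref{P2>P1}. Its sign analysis of $Q(\lambda)=P(a_2^*,c_2^*)-P(a_1^*,c_1^*)$ shows that on $(0,\tfrac{1}{3+2\sqrt{2}-\gamma}]$ and on $[\tfrac{1}{3+2\sqrt{2}-\gamma},1-\tfrac{\sqrt{2}}{2}]$ the Theorem~\ref{thm:case1} optimum wins, whereas on $[1-\tfrac{\sqrt{2}}{2},\tfrac{1}{2})$ the Theorem~\ref{thm:case2}(ii) optimum wins. Substituting the winning $(a^*,c^*)$ back into $p(x)=(x-\lambda)(a^*x-c^*)$ reproduces the three-piece formula for $p_2^{\lambda,\gamma}$, and either quoting the winning value $P(a^*,c^*)$ directly from the two theorems or substituting $(a^*,c^*)$ into the identity $\alpha(p;\lambda,\gamma)=\lambda^2\gamma(1+\gamma)a-\lambda\gamma c$ from~\eqref{eq:gap-a<0} reproduces the three-piece formula for $\alpha_2(\lambda,\gamma)$.

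The main obstacle is organizational rather than analytical: the heavy lifting is already done. The one consistency check worth performing is at the transition $\lambda=1-\tfrac{\sqrt{2}}{2}$, where Proposition~\ref{P2>P1} gives $Q=0$; here I would verify that the two relevant pieces agree (as polynomials, and therefore as gap values) so that the piecewise definition of $p_2^{\lambda,\gamma}$ and $\alpha_2(\lambda,\gamma)$ is unambiguous on the overlap of the intervals.
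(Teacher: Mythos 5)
Your proposal is correct and takes essentially the same route as the paper's own (one-sentence) proof: rule out convex and degenerate quadratics via Lemma~\ref{lemma:a>0} together with the concave candidates furnished by Lemma~\ref{lemma:a<0}, then take the pointwise-in-$\lambda$ winner between Theorems~\ref{thm:case2} and~\ref{thm:case1} as adjudicated by the sign analysis of Proposition~\ref{P2>P1}. Your added check that the two winning pieces coincide at the transition $\lambda=1-\frac{\sqrt{2}}{2}$ (both give $a^*=-2$, $c^*=-1/\lambda$ there) is a sensible refinement that the paper omits.
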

\begin{proof}\ \
We know by definition that $p_2^{\lambda,\gamma}$ has form \eqref{eq:deg2}. By invoking Proposition~\ref{P2>P1}, Lemma~\ref{lemma:a>0} and Lemma~\ref{lemma:a<0}, the above $p_2^{\lambda,\gamma}$ is the optimal $(\lambda, \gamma)$-admissible quadratic polynomial while $\alpha_2(\lambda,\gamma)$ is  the corresponding optimal gap.
\end{proof}

With the optimal  $(\lambda,\gamma)$-admissible degree 2 polynomial identified, we now look back towards its application of approximate PCP, and test the effectiveness of this polynomial against ridge regression.

\section{Ridge Regression and $(\lambda,\gamma)$-Admissible Polynomials in PCP}\label{sec:RR-Poly}
This section is devoted to answering the following question: To achieve a $(\gamma, \epsilon)$-approximate PCP for a vector at threshold $\lambda$ with least computational complexity, under what conditions should the ridge regression function be replaced by the optimal $(\lambda,\gamma)$-admissible polynomial of degree $1$ or $2$?

To answer this question, let us recall the computations are involved in computing $(\gamma, \epsilon)$-approximate PCP. The projection matrix $P_{(A,\lambda)}$ associated with $A$ at the threshold $\lambda$ can be equivalently expressed as follows:
$$
P_{(A,\lambda)} = \frac{1}{2}(I+\mathrm{sgn}(2r_\lambda(A^\top A)-I)) =  \frac{1}{2}(I+\mathrm{sgn}(p_1^{\lambda,\gamma}(A^\top A)))=\frac{1}{2}(I+\mathrm{sgn}(p_2^{\lambda,\gamma}(A^\top A))),
$$
where $r_\lambda(x)=\frac{x}{x+\lambda}$ is associated with ridge regression, $p_1^{\lambda,\gamma}(x)=\frac{x-\lambda}{1-\lambda}$ given by \eqref{eq:optimal-n=1} is the optimal degree 1 $(\lambda,\gamma)$-admissible polynomial, and $p_2^{\lambda,\gamma}(x)$ given by Theorem~\ref{Thm:Optimal2} is the optimal $(\lambda,\gamma)$-admissible quadratic polynomial. The above three different formulations for $P_{(A,\lambda)}$ naturally yield three different approaches to compute a $(\gamma, \epsilon)$-approximate PCP for any vector by replacing the signum function with its Chebyshev approximation, given in \eqref{eq:chenyshev-sign}. The minimal degree $n$ of the Chebyshev approximation provided in Lemma~\ref{lemma:Allen-Zhu-Chebyshev} is
$$
n = \frac{1}{\sqrt{2}\alpha} \log\left(\frac{3}{\epsilon \alpha^2}\right),
$$
where the values of $\alpha$  are determined by the functions $r_\lambda$,  $p_1^{\lambda,\gamma}$, and $p_2^{\lambda,\gamma}$.  More precisely, the values of $\alpha$ for determining the degrees corresponding to $r_\lambda$,  $p_1^{\lambda,\gamma}$, and $p_2^{\lambda,\gamma}$ respectively are
$\alpha_r(\lambda,\gamma)=\frac{\gamma}{2+\gamma}$, $\alpha_1(\lambda, \gamma)=\frac{\lambda\gamma}{1-\lambda}$, and $\alpha_2(\lambda, \gamma)$ given in Theorem~\ref{Thm:Optimal2}.

Once the signum function is replaced by a Chebyshev approximation, the overall computational complexity depends not only on the degree of the Chebyshev approximation, but also on the complexity of multiplying $r_\lambda(A^\top A)$, $p_1^{\lambda,\gamma}(A^\top A)$, or  $p_2^{\lambda,\gamma}(A^\top A)$ with a vector. Given any vector $y$, the computational cost of $p_1^{\lambda,\gamma}(A^\top A)y$ is dominated by that of $A^\top A y$, i.e., a single matrix-vector multiplication; the computational cost of $p_2^{\lambda,\gamma}(A^\top A)y$ is determined by two matrix-vector multiplications of this form. Unlike  $p_1^{\lambda,\gamma}(A^\top A)y$ and $p_2^{\lambda,\gamma}(A^\top A)y$ which can be evaluated exactly, $r_\lambda(A^\top A)y=(A^\top A+\lambda I)^{-1}A^\top Ay$ can be only approximated without inverting a matrix. Theoretically, $(A^\top A+\lambda I)^{-1}$ can be written as a series of $A^\top A$, so it is reasonable to assume that the computational cost of $r_\lambda(A^\top A)y$ is at least that of two matrix-vector multiplications.

Based on the discussion above, we propose a strategy of choosing ridge regression, the optimal $(\lambda,\gamma)$-admissible polynomial of degree $1$ or $2$ for a $(\gamma, \epsilon)$-approximate PCP for a vector at threshold $\lambda$. For given parameters $\lambda$ and $\gamma$, we compute the value
\begin{equation*}\label{eqn:maxalpha}
\max\{\alpha_r(\lambda, \gamma),2\alpha_1(\lambda,\gamma), \alpha_2(\lambda,\gamma)\}.
\end{equation*}
If this value equals $\alpha_r(\lambda, \gamma)$, we choose the ridge regression function $r_\lambda$; if this value equals $2\alpha_1(\lambda,\gamma)$, we choose $p_1^{\lambda,\gamma}$; and if this value equals $\alpha_2(\lambda,\gamma)$, we choose $p_2^{\lambda,\gamma}$.
With this strategy, we will show that there exist two numbers, say $b_1$ and $b_2$ with $b_1<b_2$, in the interval $(0,\frac{1}{2})$, such that we should use the ridge regression function if $\lambda\in (0,b_1)$; the optimal $(\lambda,\gamma)$-admissible polynomial of degree $2$ if $\lambda \in [b_1,b_2)$; or the optimal $(\lambda,\gamma)$-admissible polynomial of degree $1$ if $\lambda \in [b_2, \frac{1}{2})$. To find these two numbers, we conduct a study on a comparison of $2\alpha_1$ and $\alpha_2$ and a comparison of $\alpha_r$ and $\alpha_2$.

A comparison of $2\alpha_1$ and $\alpha_2$ is established in the following result.
\begin{proposition}\label{prop:deg2-deg1}
Let $\lambda \in (0,\frac{1}{2})$ and $\gamma \in (0,1)$. Then $\alpha_{2}(\lambda,\gamma)\geq 2\alpha_1(\lambda,\gamma)$ if $\lambda\in \left(0,\frac{2+2\sqrt{2}-\gamma}{8+6\sqrt{2}-\gamma}\right]$ and $\alpha_{2}(\lambda,\gamma)\le 2\alpha_1(\lambda,\gamma)$ if $\lambda\in \left [\frac{2+2\sqrt{2}-\gamma}{8+6\sqrt{2}-\gamma}, \frac{1}{2}\right]$.
\end{proposition}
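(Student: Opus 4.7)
The plan is to analyze the sign of $D(\lambda,\gamma):=\alpha_{2}(\lambda,\gamma)-2\alpha_{1}(\lambda,\gamma)$ piece by piece, using the three-branch formula for $\alpha_{2}$ from Theorem~\ref{Thm:Optimal2} together with the observation that $2\alpha_{1}(\lambda,\gamma)=\frac{2\lambda\gamma}{1-\lambda}$ is continuous and strictly increasing in $\lambda$ on $(0,\tfrac{1}{2})$. Writing $\lambda^{\star}=\frac{2+2\sqrt{2}-\gamma}{8+6\sqrt{2}-\gamma}$, I would first verify that $\lambda^{\star}$ lies in the middle branch $\bigl[\frac{1}{3+2\sqrt{2}-\gamma},\,1-\frac{\sqrt{2}}{2}\bigr]$ for every $\gamma\in(0,1)$; each of the two bounding inequalities reduces to a quadratic in $\gamma$ whose sign is checked elementarily (the identity $(\sqrt{2}+2)^{2}=6+4\sqrt{2}$ streamlines the algebra).

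On the middle branch the value $\alpha_{2}=\frac{\gamma(2+2\sqrt{2}-\gamma)}{3+2\sqrt{2}}$ is constant in $\lambda$, so $D=0$ reduces to a linear equation in $\lambda$; a direct computation yields the unique solution $\lambda^{\star}$. Combined with monotonicity of $2\alpha_{1}$, this gives $D\ge 0$ on $\bigl[\frac{1}{3+2\sqrt{2}-\gamma},\lambda^{\star}\bigr]$ and $D\le 0$ on $\bigl[\lambda^{\star},\,1-\frac{\sqrt{2}}{2}\bigr]$, which already disposes of the portion of the claim coming from the middle branch.

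On the first branch $\lambda\in\bigl(0,\frac{1}{3+2\sqrt{2}-\gamma}\bigr]$, clearing denominators shows that $D\ge 0$ is equivalent to $2(1-\lambda)^{2}\ge(1-\lambda+\lambda\gamma)^{2}$, i.e.\ $\lambda\le\frac{\sqrt{2}-1}{\sqrt{2}-1+\gamma}$. Rationalizing gives $\frac{\sqrt{2}-1}{\sqrt{2}-1+\gamma}=\frac{1}{1+(\sqrt{2}+1)\gamma}$, and this upper bound dominates $\frac{1}{3+2\sqrt{2}-\gamma}$ whenever $\gamma(\sqrt{2}+2)\le 2(\sqrt{2}+1)$, i.e.\ $\gamma\le\sqrt{2}$, which is automatic since $\gamma<1$. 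Hence $D\ge 0$ throughout this branch.

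On the third branch $\lambda\in\bigl[1-\frac{\sqrt{2}}{2},\,\frac{1}{2}\bigr)$, a direct computation yields
\[
D(\lambda,\gamma)=\frac{\gamma}{1-\lambda}\,h(\lambda),\qquad h(\lambda):=2(1+\gamma)\lambda^{2}-(4+\gamma)\lambda+1,
\]
so the sign of $D$ is controlled by the quadratic $h$. Its discriminant equals $8+\gamma^{2}>0$, and its roots
$\lambda_{\pm}=\frac{(4+\gamma)\pm\sqrt{8+\gamma^{2}}}{4(1+\gamma)}$
satisfy $\lambda_{-}\le 1-\frac{\sqrt{2}}{2}$ and $\lambda_{+}\ge \frac{1}{2}$ for every $\gamma\in[0,1]$; both inequalities are one-line reductions after clearing denominators and squaring (the sign of the bounding expression being easy to keep track of). Hence the interval $\bigl[1-\frac{\sqrt{2}}{2},\,\frac{1}{2}\bigr)$ lies between the two roots of the upward-opening parabola $h$, so $h<0$ and therefore $D\le 0$ on this branch. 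The main obstacle is the careful bookkeeping of the three branches together with the squaring arguments in the third branch; all other steps reduce to routine algebra once the correct reductions are in hand.
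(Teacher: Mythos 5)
Your proof is correct and follows essentially the same route as the paper's: a three-branch case analysis driven by the piecewise formula for $\alpha_2$ from Theorem~\ref{Thm:Optimal2}, with the crossover point $\lambda^{\star}$ located on the middle branch and the third branch controlled by the same quadratic $2(1+\gamma)\lambda^{2}-(4+\gamma)\lambda+1$. The only cosmetic difference is that you work with the difference $\alpha_2-2\alpha_1$ rather than the ratio $\alpha_2/\alpha_1$, and your first-branch reduction to $2(1-\lambda)^2\ge(1-\lambda+\lambda\gamma)^2$ is an exact equivalence where the paper uses a chain of lower bounds.
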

\begin{proof} \ \ We first notice that $\frac{1}{3+2\sqrt{2} -\gamma} < \frac{2+2\sqrt{2}-\gamma}{8+6\sqrt{2}-\gamma} < 1-\frac{1}{2}\sqrt{2}$. As shown in Theorem~\ref{Thm:Optimal2}, the explicit expression of $\alpha_2(\lambda,\gamma)$ depends on the value of $\lambda$, therefore, the proof will be given by checking the ratio
$$
R(\lambda):=\alpha_2(\lambda,\gamma)/\alpha_1(\lambda,\gamma)
$$
for $\lambda$ in the intervals $(0,\frac{1}{3+2\sqrt{2}-\gamma}]$,  $[\frac{1}{3+2\sqrt{2}-\gamma}, 1-\frac{\sqrt{2}}{2}]$, and $[1-\frac{\sqrt{2}}{2}, \frac{1}{2})$.  Note that $\alpha_1(\alpha,\gamma)=\frac{\lambda \gamma}{1-\lambda}$ from \eqref{eq:optimal-n=1}. To view this ratio, we plot this ratio over the interval $(0,1/2)$ for three different values of $\gamma$ in Figure~\ref{fig:RatioD2D1}. This informally confirms the conclusion of the result. The formal proof follows.
\begin{figure}[htpb]
 \centering
 \begin{tabular}{ccc}
\includegraphics[scale=0.40]{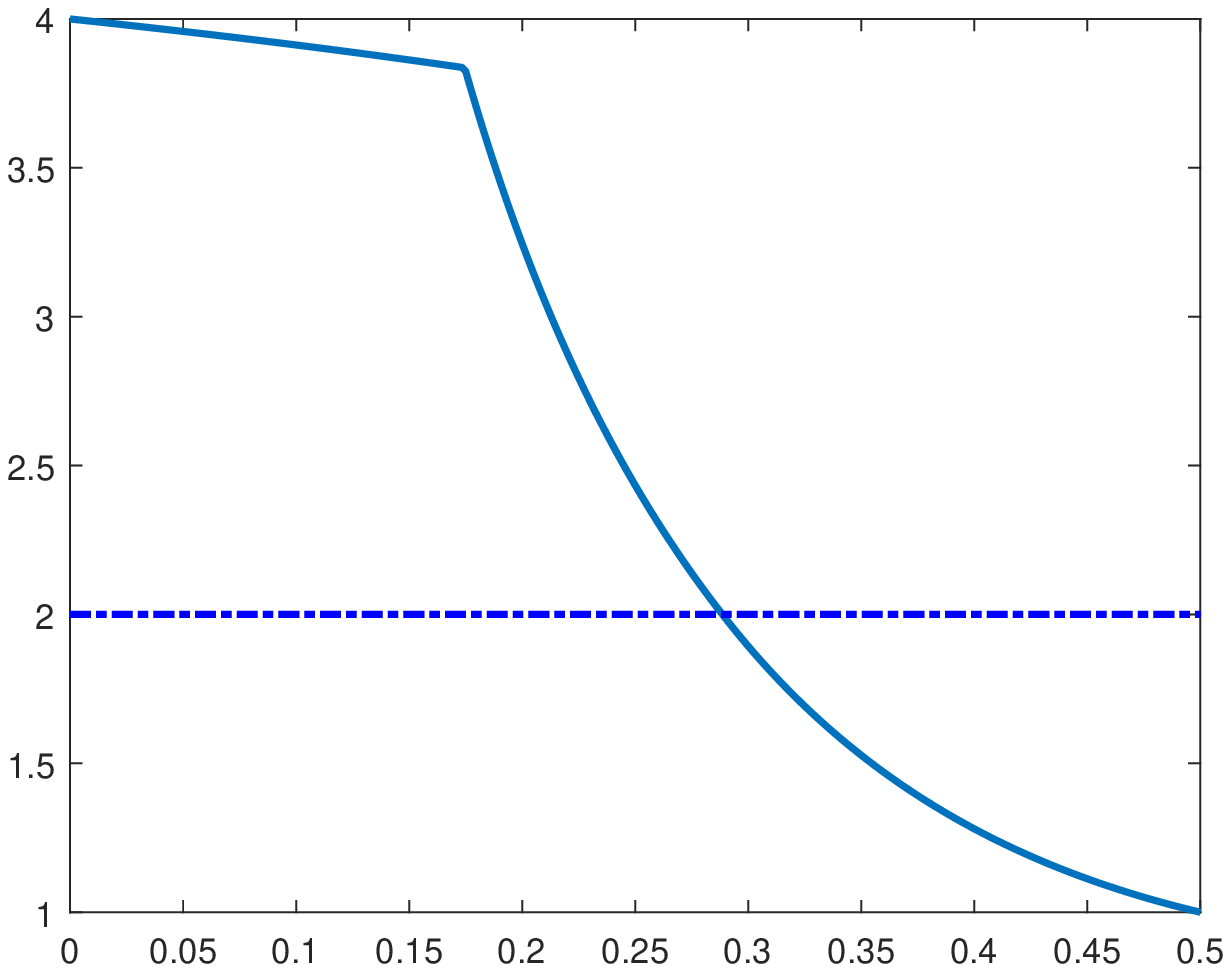}&
\includegraphics[scale=0.40]{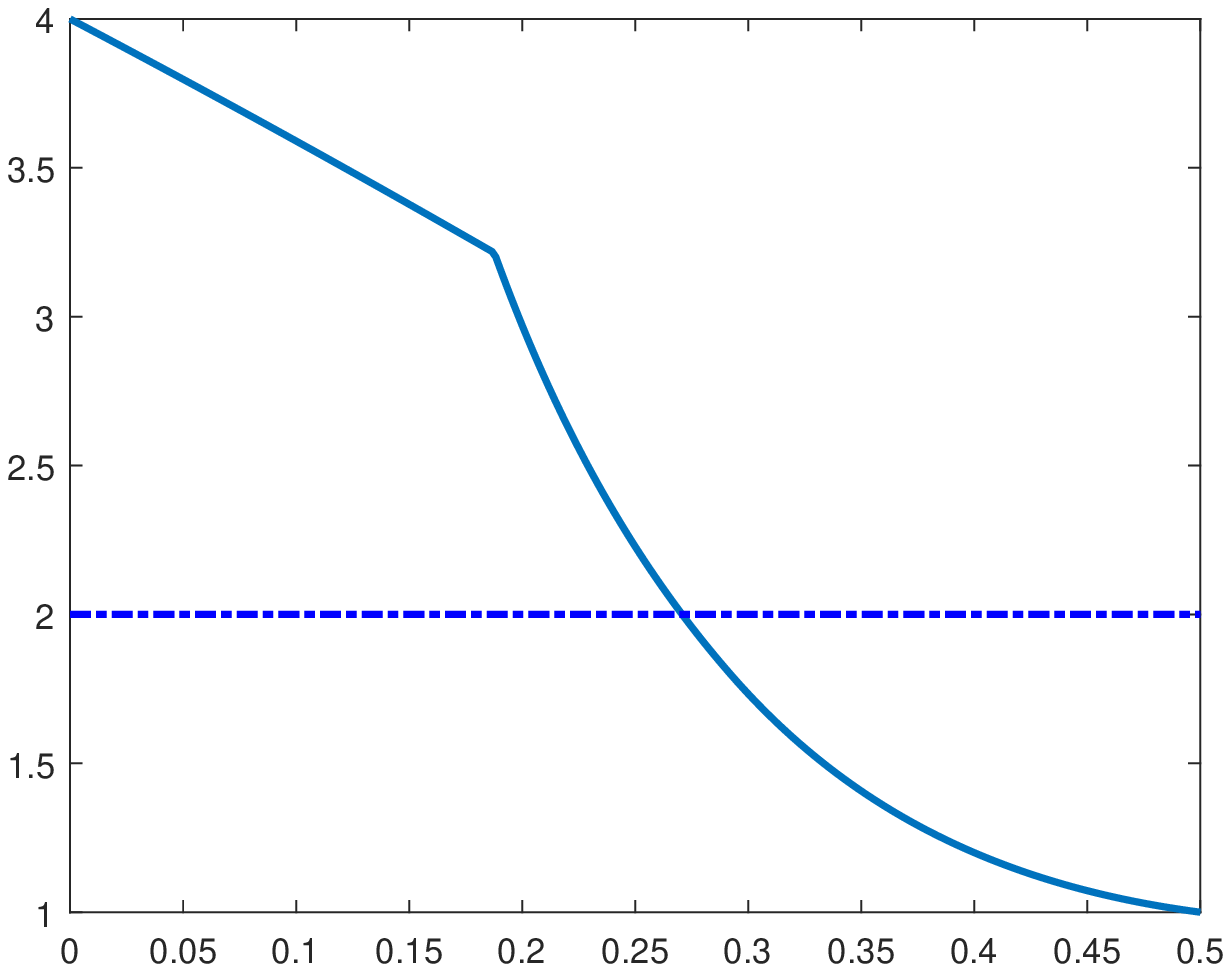}&
\includegraphics[scale=0.40]{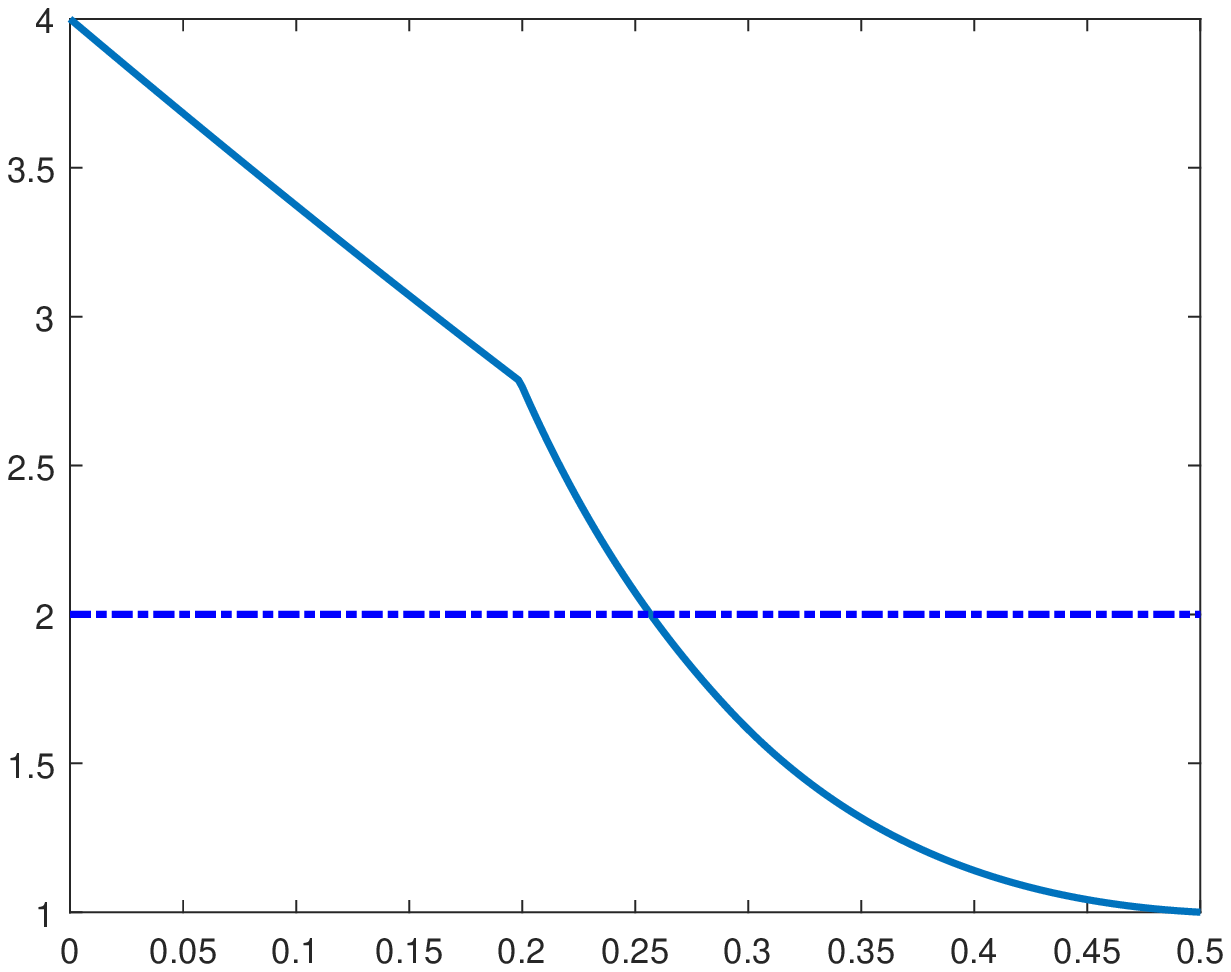}\\
(a) $\gamma=0.1$ &(b) $\gamma=0.5$ &(c) $\gamma=0.8$
\end{tabular}
\caption{The plots (solid lines) of the ratio $R(\lambda):=\alpha_2(\lambda,\gamma)/\alpha_1(\lambda,\gamma)$ over the interval $(0,1/2)$.}
\label{fig:RatioD2D1}
\end{figure}

(i) $\lambda \in (0,\frac{1}{3+2\sqrt{2}-\gamma}]$: By Theorem~\ref{Thm:Optimal2}, $\alpha_2(\lambda,\gamma)=\frac{4\gamma\lambda(1-\lambda)}{(1+\lambda\gamma-\lambda)^2}$. Then,
$R(\lambda)=\frac{4(1-\lambda)^2}{(1+\lambda\gamma-\lambda)^2}$. Since $0<1+\lambda\gamma-\lambda<1$ and $(1-\lambda)^2$ is decreasing on $(0,\frac{1}{3+2\sqrt{2}-\gamma}]$, we have
$$
R(\lambda)>4(1-\lambda)^2 \ge 4 \left(\frac{2+2\sqrt{2}-\gamma}{3+2\sqrt{2}-\gamma}\right)^2,
$$
which is greater than $2$ for every $\lambda \in (0,\frac{1}{3+2\sqrt{2}-\gamma}]$ and $\gamma \in (0,1)$.

(ii) $\lambda \in [\frac{1}{3+2\sqrt{2}-\gamma}, 1-\frac{\sqrt{2}}{2}]$:  Note that $\alpha_2(\lambda,\gamma)=\frac{\gamma(2+2\sqrt{2}-\gamma)}{ (3+2\sqrt{2})}$, independent of $\lambda$. We have $R(\lambda)=\frac{(2+2\sqrt{2}-\gamma)(1-\lambda)}{ (3+2\sqrt{2})\lambda}$, a strictly decreasing function with respect to $\lambda$. We can check that $R(\lambda) \ge 2$ if $\lambda \le \frac{2+2\sqrt{2}-\gamma}{8+6\sqrt{2}-\gamma}$ and $R(\lambda) \le 2$ if $\lambda \ge \frac{2+2\sqrt{2}-\gamma}{8+6\sqrt{2}-\gamma}$.

(iii) $\lambda \in [1-\frac{\sqrt{2}}{2}, \frac{1}{2})$: We have $\alpha_2(\lambda,\gamma)=\frac{\lambda(2\lambda-1)\gamma(1+\gamma)}{1-\lambda}+\gamma$ by  Theorem~\ref{Thm:Optimal2}. Then, after some simplifications we have
$$
R(\lambda)=\frac{1}{\lambda}-(2+\gamma)+(2+2\gamma)\lambda.
$$
We want to claim $R(\lambda)<2$ for $\lambda \in [\frac{1}{3+2\sqrt{2}-\gamma}, 1-\frac{\sqrt{2}}{2}]$, it suffices to show that $f(\lambda):=(2+2\lambda)\lambda^2-(4+\gamma)\lambda+1<0$ on $[\frac{1}{3+2\sqrt{2}-\gamma}, 1-\frac{\sqrt{2}}{2}]$. Since $\frac{(4+\gamma)+\sqrt{\gamma^2+8}}{4+4\gamma)}$, the largest root of the polynomial $f$, is bigger than $\frac{1}{2}$, and $f(1-\frac{1}{2}\sqrt{2})=-4\sqrt{2}+2\gamma-\frac{11}{2}\sqrt{2}\gamma<0$, we conclude that $f(\lambda)$ indeed is negative on $[\frac{1}{3+2\sqrt{2}-\gamma}, 1-\frac{\sqrt{2}}{2}]$.

Combining the above results, the conclusion of this proposition follows.
\end{proof}

Next, we compare the values of $\alpha_2(\lambda,\gamma)$ and $\alpha_r(\lambda,\gamma)$ for $\lambda \in(0,  \frac{1}{2})$ and $\gamma \in (0,1)$.

\begin{proposition}\label{prop:deg2-ridge}
Let $\lambda \in (0,\frac{1}{2})$ and $\gamma \in (0,1)$. Then $\alpha_{2}(\lambda,\gamma)\geq \alpha_r(\lambda,\gamma)$ if $\lambda  \in \left[ \frac{ 5+\gamma-2\sqrt{4+2\gamma} }{(9+2\gamma +\gamma^2)}, \frac{1}{2}\right )$ and $\alpha_{2}(\lambda,\gamma)\le \alpha_r(\lambda,\gamma)$ if $\lambda  \in \left(0,  \frac{ 5+\gamma-2\sqrt{4+2\gamma} }{(9+2\gamma +\gamma^2)}\right )$.
\end{proposition}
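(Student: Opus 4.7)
The plan is to mirror the strategy of Proposition~\ref{prop:deg2-deg1}: exploit the piecewise description of $\alpha_2(\lambda,\gamma)$ given by Theorem~\ref{Thm:Optimal2} and analyze the sign of $\alpha_2(\lambda,\gamma)-\alpha_r(\lambda,\gamma)$ on each of the three subintervals $(0,\tfrac{1}{3+2\sqrt{2}-\gamma}]$, $[\tfrac{1}{3+2\sqrt{2}-\gamma},1-\tfrac{\sqrt{2}}{2}]$, and $[1-\tfrac{\sqrt{2}}{2},\tfrac{1}{2})$, noting that $\alpha_r(\lambda,\gamma)=\tfrac{\gamma}{2+\gamma}$ is constant in $\lambda$. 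The candidate threshold $\lambda^\star=\tfrac{5+\gamma-2\sqrt{4+2\gamma}}{9+2\gamma+\gamma^2}$ will appear as a root of a single quadratic that arises in the first subinterval; the rest is sign bookkeeping.

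I would dispose of the two ``easy'' subintervals first. On the middle one, $\alpha_2$ equals $\tfrac{\gamma(2+2\sqrt{2}-\gamma)}{3+2\sqrt{2}}$, so clearing denominators reduces $\alpha_2\ge\alpha_r$ to the inequality $-\gamma^2+2\sqrt{2}\gamma+(1+2\sqrt{2})\ge 0$, whose roots are $\gamma=\sqrt{2}\pm(1+\sqrt{2})$, and hence the inequality holds strictly on $\gamma\in(0,1)$. On the third subinterval, writing
\[
\alpha_2-\alpha_r \;=\; \frac{\gamma(1+\gamma)}{(1-\lambda)(2+\gamma)}\,\bigl[(2+\gamma)\lambda(2\lambda-1)+(1-\lambda)\bigr],
\]
the sign is controlled by the quadratic $f(\lambda)=(4+2\gamma)\lambda^2-(3+\gamma)\lambda+1$, whose discriminant $(3+\gamma)^2-4(4+2\gamma)=\gamma^2-2\gamma-7$ is negative for $\gamma\in(0,1)$; since the leading coefficient is positive, $f>0$ on the whole subinterval, so $\alpha_2>\alpha_r$ there.

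The heart of the argument is the first subinterval. There
\[
\alpha_2-\alpha_r \;=\; \frac{\gamma}{(2+\gamma)(1+(\gamma-1)\lambda)^2}\bigl[-(9+2\gamma+\gamma^2)\lambda^2+(10+2\gamma)\lambda-1\bigr],
\]
so the sign of $\alpha_2-\alpha_r$ matches the sign of the quadratic $-(9+2\gamma+\gamma^2)\lambda^2+(10+2\gamma)\lambda-1$. Computing its discriminant gives $(10+2\gamma)^2-4(9+2\gamma+\gamma^2)=64+32\gamma=16(4+2\gamma)$, and the two roots come out to be exactly
\[
\lambda_\pm \;=\; \frac{5+\gamma\pm 2\sqrt{4+2\gamma}}{9+2\gamma+\gamma^2},
\]
so $\lambda_-=\lambda^\star$. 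Since the leading coefficient of the bracket is negative, the bracket is $<0$ for $\lambda<\lambda_-$ and $>0$ for $\lambda_-<\lambda<\lambda_+$.

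The final step is to locate $\lambda^\star$ inside $(0,\tfrac{1}{3+2\sqrt{2}-\gamma}]$. Rather than estimating $\lambda^\star$ directly, I would argue by continuity: the three pieces of $\alpha_2(\lambda,\gamma)$ agree at the junction points $\tfrac{1}{3+2\sqrt{2}-\gamma}$ and $1-\tfrac{\sqrt{2}}{2}$, and the previous two paragraphs show $\alpha_2>\alpha_r$ throughout the middle and third subintervals; in particular $\alpha_2-\alpha_r>0$ at the right endpoint of the first subinterval, which forces the bracket above to be positive there, so that $\tfrac{1}{3+2\sqrt{2}-\gamma}$ lies strictly between the two roots $\lambda_-$ and $\lambda_+$. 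Consequently $0<\lambda^\star<\tfrac{1}{3+2\sqrt{2}-\gamma}$, and combining the sign information from all three subintervals yields $\alpha_2\ge\alpha_r$ for $\lambda\in[\lambda^\star,\tfrac12)$ and $\alpha_2\le\alpha_r$ for $\lambda\in(0,\lambda^\star]$, which is the claim. The main potential obstacle is simply keeping the algebraic reduction in the first subinterval clean enough to see the discriminant simplify to $16(4+2\gamma)$; the continuity trick then avoids a clumsy direct check that $\lambda^\star<\tfrac{1}{3+2\sqrt{2}-\gamma}$.
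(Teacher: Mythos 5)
Your proof is correct and follows essentially the same route as the paper: case analysis over the three subintervals of the piecewise formula for $\alpha_2$, reducing each comparison with $\alpha_r=\tfrac{\gamma}{2+\gamma}$ to the sign of a quadratic, with the threshold $\lambda^\star$ emerging as the smaller root of $-(9+2\gamma+\gamma^2)\lambda^2+(10+2\gamma)\lambda-1$. The only (valid) variation is that you locate $\lambda^\star$ inside the first subinterval by continuity of $\alpha_2$ across the junction, where the paper instead evaluates the quadratic directly at $\tfrac{1}{3+2\sqrt{2}-\gamma}$ and notes the larger root exceeds $\tfrac12$.
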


\begin{proof}\ \
We first notice that $0<\frac{ 5+\gamma-2\sqrt{4+2\gamma} }{(9+2\gamma +\gamma^2)} < \frac{1}{3+2\sqrt{2} -\gamma}$ for all $\gamma \in (0,1)$. As shown in Theorem~\ref{Thm:Optimal2}, the explicit expression of $\alpha_2(\lambda,\gamma)$ depends on the value of $\lambda$, therefore, the proof will be given by checking the ratio
$$
R(\lambda):=\alpha_2(\alpha,\gamma)/\alpha_r(\alpha,\gamma)
$$
for $\lambda$ in the intervals $(0,\frac{1}{3+2\sqrt{2}-\gamma}]$,  $[\frac{1}{3+2\sqrt{2}-\gamma}, 1-\frac{\sqrt{2}}{2}]$, and $[1-\frac{\sqrt{2}}{2}, \frac{1}{2})$. To view this ratio, we plot this ratio over the interval $(0,1/2)$ for three different values of $\gamma$ in Figure~\ref{fig:RatioD2R}. This informally confirms the conclusion of the result. The formal proof follows.
\begin{figure}[htpb]
 \centering
 \begin{tabular}{ccc}
\includegraphics[scale=0.40]{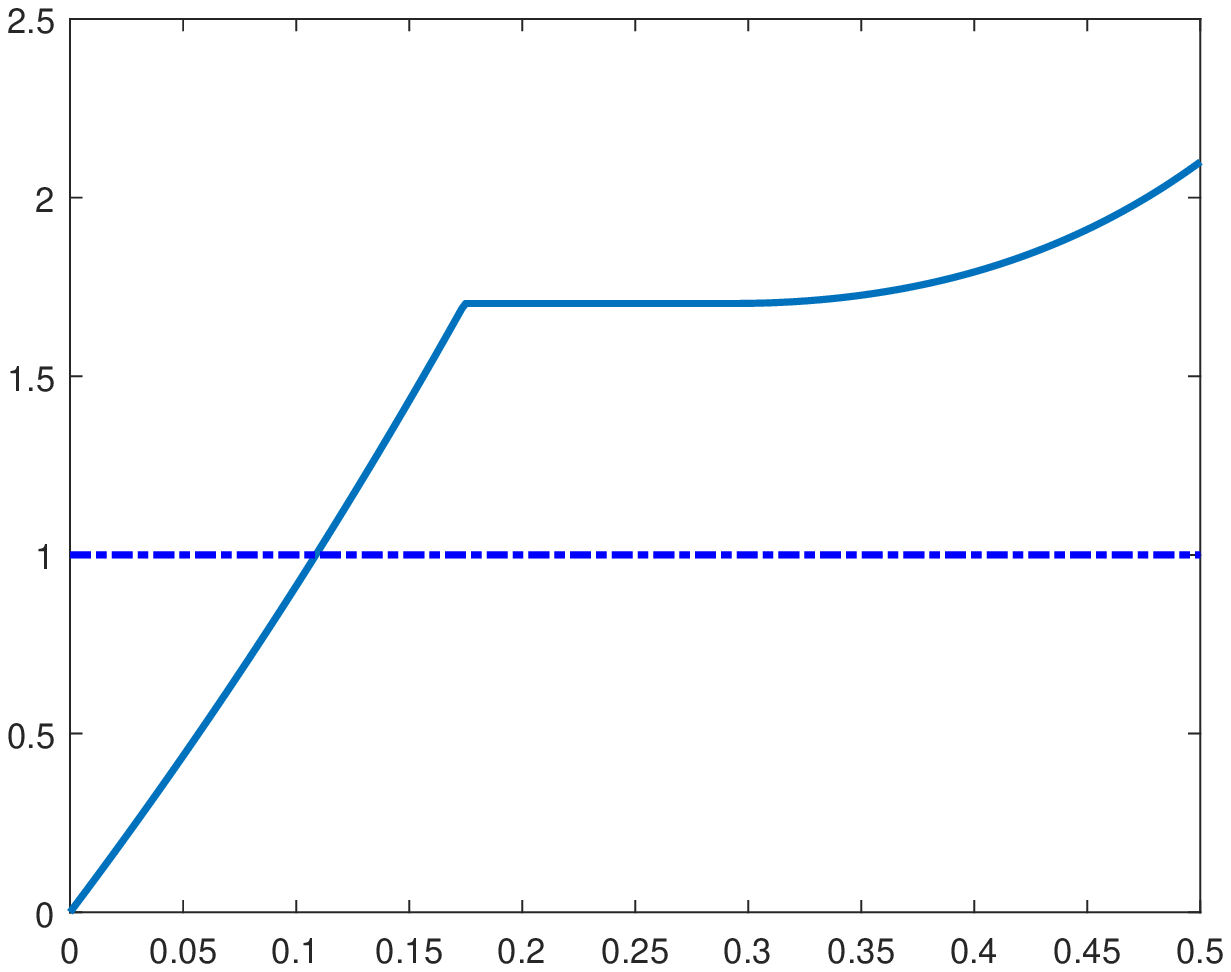}&
\includegraphics[scale=0.40]{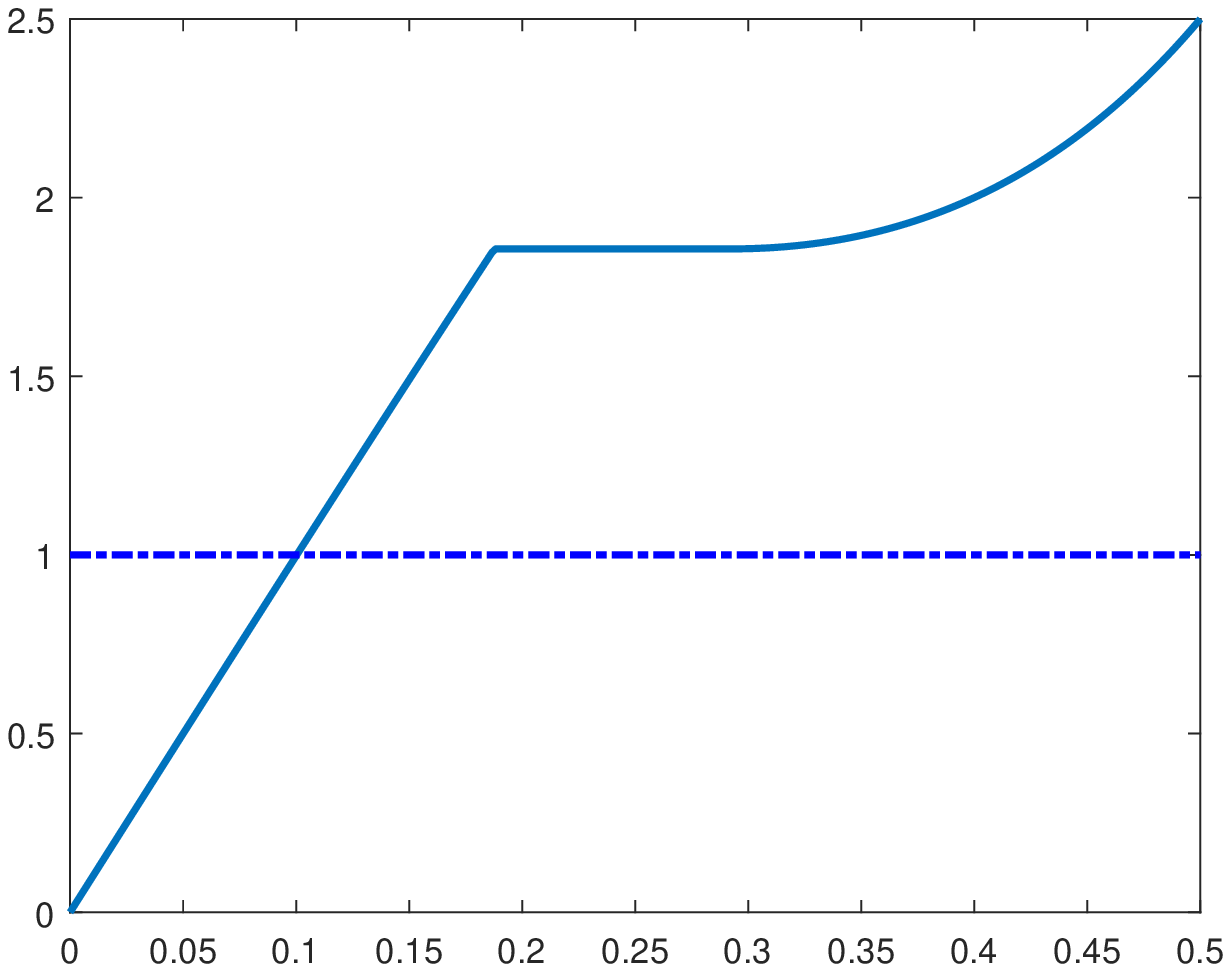}&
\includegraphics[scale=0.40]{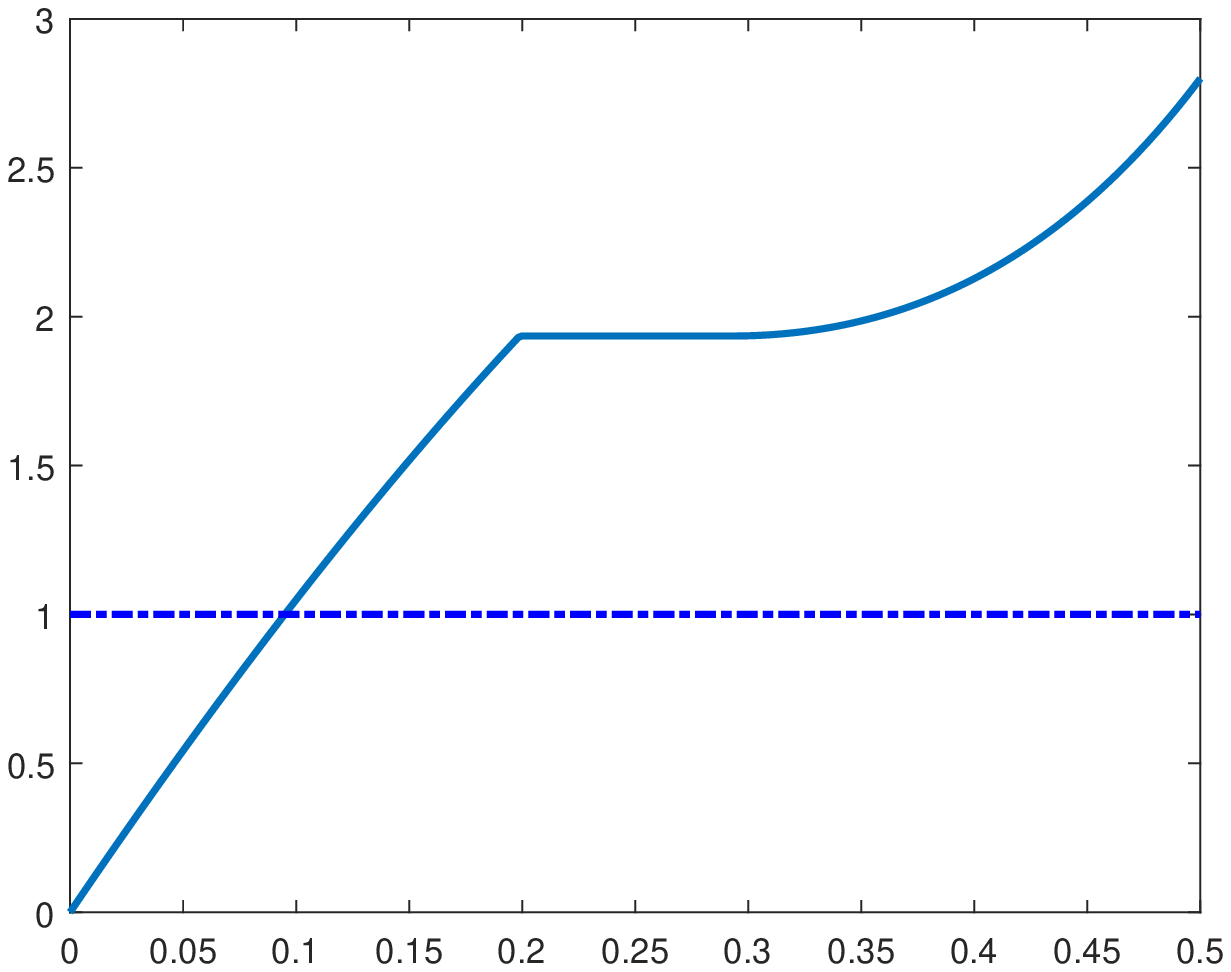}\\
(a) $\gamma=0.1$ &(b) $\gamma=0.5$ &(c) $\gamma=0.8$
\end{tabular}
\caption{The plots (solid lines) of the ratio $R(\lambda):=\alpha_2(\lambda,\gamma)/\alpha_r(\lambda,\gamma)$ over the interval $(0,1/2)$.}
\label{fig:RatioD2R}
\end{figure}

(i) $\lambda \in (0,\frac{1}{3+2\sqrt{2}-\gamma}]$: By Theorem~\ref{Thm:Optimal2}, $\alpha_2(\lambda,\gamma)=\frac{4\gamma\lambda(1-\lambda)}{(1+\lambda\gamma-\lambda)^2}$. Then, $R(\lambda)=\frac{4\lambda(1-\lambda)(2+\gamma)}{(1+\lambda\gamma-\lambda)^2}$. Hence, $R(\lambda) \ge 1$ if and only if $f(\lambda):=4\lambda(1-\lambda)(2+\gamma)-(1+\lambda\gamma-\lambda)^2\le 0$. The function $f(\lambda)=-(9+2\gamma+\gamma^2)\lambda^2+(10+2\gamma)\lambda-1$ is concave and has two roots $\lambda_1=\frac{ 5+\gamma-2\sqrt{4+2\gamma} }{(9+2\gamma +\gamma^2)}$ and $\lambda_2=\frac{ 5+\gamma+2\sqrt{4+2\gamma} }{(9+2\gamma +\gamma^2)}$. Notice that $\lambda_2>\frac{1}{2}$ and $f\left(\frac{1}{3+2\sqrt{2}-\gamma}\right)>0$, we know that $R(\lambda) \ge 1$ if $\lambda \in (\lambda_1, \frac{1}{3+2\sqrt{2}-\gamma}]$ and $R(\lambda) \le 1$ if $\lambda \in (0, \lambda_1]$.

(ii) $\lambda \in [\frac{1}{3+2\sqrt{2}-\gamma}, 1-\frac{\sqrt{2}}{2}]$:  Note that $\alpha_2(\lambda,\gamma)=\frac{\gamma(2+2\sqrt{2}-\gamma)}{ (3+2\sqrt{2})}$, independent of $\lambda$. We have $R(\lambda)=\frac{4+4\sqrt{2}+2\sqrt{2}\gamma -\gamma^2}{3+2\sqrt{2}}$ which is clearly bigger than $1$ for all $\gamma \in (0, 1)$.

(iii) $\lambda \in [1-\frac{\sqrt{2}}{2}, \frac{1}{2})$: We have $\alpha_2(\lambda,\gamma)=\frac{\lambda(2\lambda-1)\gamma(1+\gamma)}{1-\lambda}+\gamma$ by  Theorem~\ref{Thm:Optimal2}. Then $R(\lambda)=\frac{((2\lambda-1)(\lambda +\lambda\gamma)+1-\lambda)(2+\gamma)}{(1-\lambda)}$. Actually, we can show $R(\lambda)>1$ for all $\lambda \in (0, 1)$. In this situation, it is equivalent to showing $f(\lambda):=((2\lambda-1)(\lambda +\lambda\gamma)+1-\lambda)(2+\gamma)-(1-\lambda) >0$. Since after simplifications $f(\lambda)=(1+\gamma)((4+2\gamma)\lambda^2-(3+\gamma)\lambda+1)$ and $(3+\gamma)^2-4(4+2\gamma)<0$, we know that $f(\lambda)>0$ for all $\lambda$.

The result of this proposition follows immediately from the above discussions.
\end{proof}

By using Propositions~\ref{prop:deg2-deg1} and \ref{prop:deg2-ridge}, we now set
\begin{equation}\label{def:b}
b_1=\frac{ (5+\gamma)-2\sqrt{4+2\gamma} }{9+2\gamma +\gamma^2} \quad \mbox{and} \quad
b_2=\frac{2+2\sqrt{2}-\gamma}{8+6\sqrt{2}-\gamma}.
\end{equation}
Both numbers are in $(0, \frac{1}{2})$ and $b_1<b_2$. From these propositions, our answer to the question raised at the beginning of this section is summarized as follows. According to the value of $\gamma$, the interval $(0, \frac{1}{2})$ is partitioned into three subintervals $(0, b_1)$, $[b_1, b_2)$, and $[ b_2, \frac{1}{2})$. Then, according to the value of $\lambda$, we will choose $r_\lambda$, $p_2^{\lambda,\gamma}$, or  $p_1^{\lambda,\gamma}$ in computing a $(\lambda,\gamma)$-approximate PCP if $\lambda$ is in $(0, b_1)$, $[b_1, b_2)$, or $\left[ b_2, \frac{1}{2}\right]$, respectively.

To end this section, we present a $(\lambda,\gamma)$-approximate PCP algorithm using the optimal $(\lambda,\gamma)$-admissible polynomials. See  \texttt{PolyPCP} in Algorithm~\ref{alg:FSS}. In particular, we rename \texttt{PolyPCP} as  \texttt{Poly1PCP} if $p^{\lambda,\gamma}_1$ is used and rename \texttt{PolyPCP} as  \texttt{Poly2PCP} if $p^{\lambda,\gamma}_2$ is adopted.

\begin{algorithm} \label{alg:FSS}
\SetKwInOut{Input}{Input}
\SetKwInOut{Output}{Output}
\caption{\texttt{PolyPCP}$(A,\chi,\lambda,\gamma,n)$}
	
	\Input{ $A \in \mathbb{R}^{m \times d}, \text{ } \|A\| =1$; \quad $ \chi \in \mathbb{R}^d$;\quad  $\lambda > 0, $ threshold; \newline
	 \quad $\gamma \in (0,1),$ PCP approximation ratio \quad $n$, degree of Chebyshev approximation}
	
	\Output{vector that approximates $\zeta \approx P_{(A,\lambda)}(\chi)$}
	
	$\gamma \leftarrow \max\{\gamma, \frac{\log(n)}{n}\}$
		
	$\kappa \leftarrow 2 \alpha^2$  \hskip 1cm $\diamond$ Here  $\alpha=\alpha_1(\lambda,\gamma)$ (or $\alpha=\alpha_2(\lambda,\gamma)$) if $p^{\lambda,\gamma}_1$ (or $p^{\lambda,\gamma}_2$) is used;
		
	Define $c_k = \frac{2-\mathbbm{1}[k=0]}{n+1} \sum_{j=0}^n \sqrt{2} \cos\left(\frac{k(j+.5)\pi}{n+1}\right)\left(1+\kappa - \cos\left(\frac{(j+.5)\pi}{n+1}\right)\right)^{-1/2} $\
	
	$b_{n+1} \leftarrow  0$, $b_n  \leftarrow c_n \chi$\
	
	\For{$ r = n-1$ \textbf{to} $0$}{
	
	$w\leftarrow (1+\kappa)b_{r+1}- 2 p^{\lambda,\gamma}(A^\top A)  p^{\lambda,\gamma}(A^\top A) b_{r+1}$  \hskip 1cm $\diamond$ Here  $p^{\lambda,\gamma}$ is $p^{\lambda,\gamma}_1$ (or $p^{\lambda,\gamma}_2$) \
	
	$b_r \leftarrow  2w-b_{r+2}+c_r \chi$\
	
}
$u \leftarrow p^{\lambda,\gamma}(A^\top A) (b_0-w)$\

\Return{$\frac{1}{2} (u+\chi)$}
	
\end{algorithm}

\section{Experiments}\label{sec:experiments}
In this section, we provide numerical results to illustrate the effectiveness of the proposed algorithm \texttt{PolyPCP} for $(\lambda,\gamma)$-approximate PCP and to validate the theoretical results.

We begin with the methodology for creating the datasets for our experiments. We set the parameters $\lambda \in (0, \frac{1}{2}]$ and $\gamma \in (0,1)$. The size of the matrix $A$ in our simulation is $2000 \times 2000$. We generate this matrix in two different ways. In the first way, we begin with a $2000 \times 2000$ random matrix $B$ whose SVD is $B=U \widehat{\Sigma}V^\top$. Here $U$ and $V$ are two orthogonal matrices and the diagonal entries of $\widehat{\Sigma}$ are the singular values of $B$. We divide each diagonal entry of $\widehat{\Sigma}$ by $\sigma_1$ (the largest signal value of $B$), and then the diagonal entries in the interval $(\sqrt{(1-\gamma)\lambda}, \sqrt{(1+\gamma)\lambda})$ are changed to zeros. We denote the resulting matrix $\Sigma$. By using $U$, $V$ and $\Sigma$, we define $A=U {\Sigma}V^\top$.  We refer to this matrix $A$ as the \textit{random eigenvalue distribution} matrix.
In the second way, we generate a $2000 \times 2000$ diagonal matrix $\Sigma$ by uniformly choosing half of diagonal entries from $[0, \sqrt{(1-\gamma)\lambda}]$ and another half from $[\sqrt{(1+\gamma)\lambda}, 1]$. We then construct $A=U\Sigma V^\top$ where $U$ and $V$ are two $2000 \times 2000$ random orthogonal matrices. This is similar to the synthetic data used in \cite{Allen-Zhu-Li:ICML:17}.  We refer to this matrix as the \textit{uniform eigenvalue distribution} matrix.

We measure the accuracy of the $(\lambda,\gamma)$-approximate PCP from \texttt{PolyPCP} and \texttt{QuickPCP} by the relative projection error defined as follows.
Let $P_{(A,\lambda)} \chi$ be the projection of $\chi$ onto the span of the singular vectors of $A$ whose corresponding singular values are at least $\sqrt{\lambda}$, and  let $\xi$ be the output from either \texttt{PolyPCP} or \texttt{QuickPCP}. The relative  projection error is defined as
\begin{equation}
E={\|\xi-P_{(A,\lambda)}  \chi \|_2}/{\|P_{(A,\lambda)}  \chi\|_2}.
\end{equation}

We fix $\gamma=0.1$ in our experiments, and get $b_1\approx 0.1087$ and $b_2 \approx 0.2886$ by \eqref{def:b}. As a direct result of Proposition \ref{prop:deg2-ridge}, we know that $\alpha_r(\lambda,\gamma) > \alpha_2(\lambda,\gamma)$ whenever $\lambda \in (0,b_1)$, and the negation of this inequality holds when $\lambda \in [b_1,\frac{1}{2})$. We also know that $\alpha_1(\lambda,\gamma) < \alpha_2(\lambda,\gamma)$ for any $\lambda \in (0,\frac{1}{2})$. Therefore, for a fixed degree Chebyshev polynomial, $g_n$, we can expect the relative projection error of \texttt{QuickPCP} to be lower than that of \texttt{Poly1PCP} and \texttt{Poly2PCP} if $\lambda\in (0,b_1)$, and the relative projection error of  \texttt{Poly2PCP} to be lower than that of  \texttt{QuickPCP} and \texttt{Poly1PCP} if $\lambda \in (b_1,\frac{1}{2})$. When considering the time for each algorithm to converge within a fixed error tolerance, as a result of Proposition \ref{prop:deg2-deg1} we can expect that \texttt{Poly2PCP} will converge the fastest when $\lambda \in (b_1,b_2)$ and \texttt{Poly1PCP} will converge the fastest when $\lambda \in (b_2,\frac{1}{2})$. For the case when $\lambda \in (0,b_1)$, the variability in the cost of ridge regression prevents us from knowing if \texttt{Poly2PCP} or \texttt{QuickPCP} will converge in the least amount of time.

In the first experiment, we test the uniform eigenvalue distribution matrices. We choose four different $\lambda$ values $0.05, 0.15, 0.3,$ and $0.48$. Since $0.05 \in (0, b_1)$ and based on the discussion given at the end of Section~\ref{sec:RR-Poly}, we expect that \texttt{QuickPCP}, for this $\lambda$ value, performs better than both \texttt{Poly1PCP} and \texttt{Poly2PCP} in terms of the relative  projection error. Similarly, since $0.15, 0.3$ and $.48$ are in the interval $[b_1, \frac{1}{2})$, we expect that \texttt{Poly2PCP} has a lower relative error than both \texttt{Poly1PCP} and \texttt{QuickPCP}. To confirm these expectations, we compute the relative projection  errors of the results from \texttt{QuickPCP}, \texttt{Poly1PCP}, and \texttt{Poly2PCP}, letting the degree of the Chebyshev polynomial vary from 1 to 200. We remark that for \texttt{QuickPCP}, MATLAB's conjugate gradient method with an error tolerance of $\epsilon'=10^{-13}$ is adopted as a ridge regression solver.  Each graph in Figure~\ref{fig:3} plots the relative error in logarithmic scale on the vertical axis and the degree of the Chebyshev approximation to the signum function on the horizontal axis. The three curves in each graph are for \texttt{QuickPCP}, \texttt{Poly1PCP}, and \texttt{Poly2PCP}. The relative projection  error plotted is averaged over 100 experiments. The results in all graphs of Figure~\ref{fig:3} are consistent with our theoretical results in Section~\ref{sec:RR-Poly}. We also see from Figure~\ref{fig:3}(d) that \texttt{Poly1PCP}, and \texttt{Poly2PCP} perform similarly when $\lambda=0.48$. This is mainly due to the ratio $\alpha_2(\lambda,\gamma)/\alpha_1(\lambda,\gamma)$ tending to 1 as $\lambda$ approaches 0.5 (also see Figure~\ref{fig:RatioD2D1}(a)).

\begin{figure}
 \centering
 \begin{tabular}{cc}
\includegraphics[scale=.25]{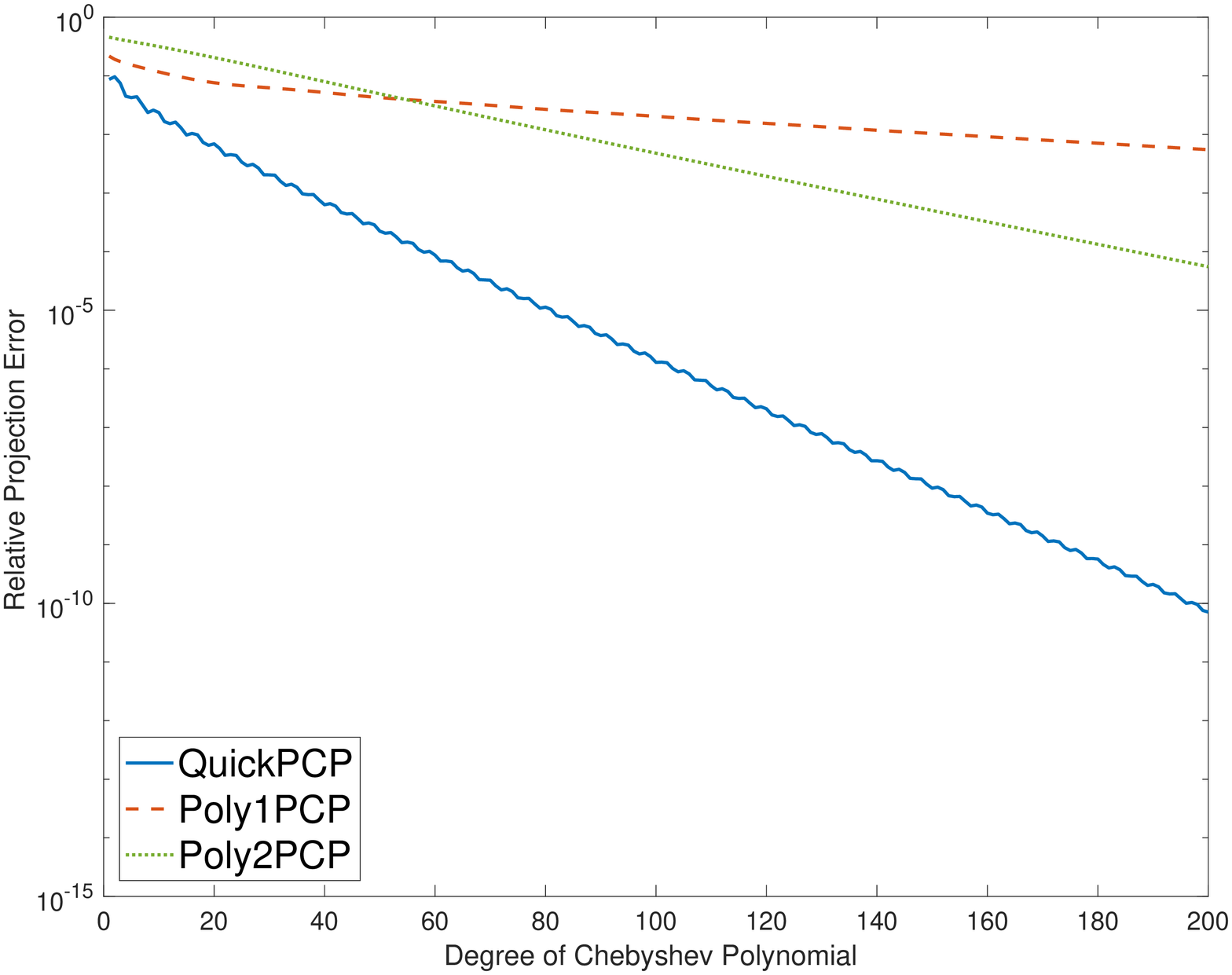}&
\includegraphics[scale=.25]{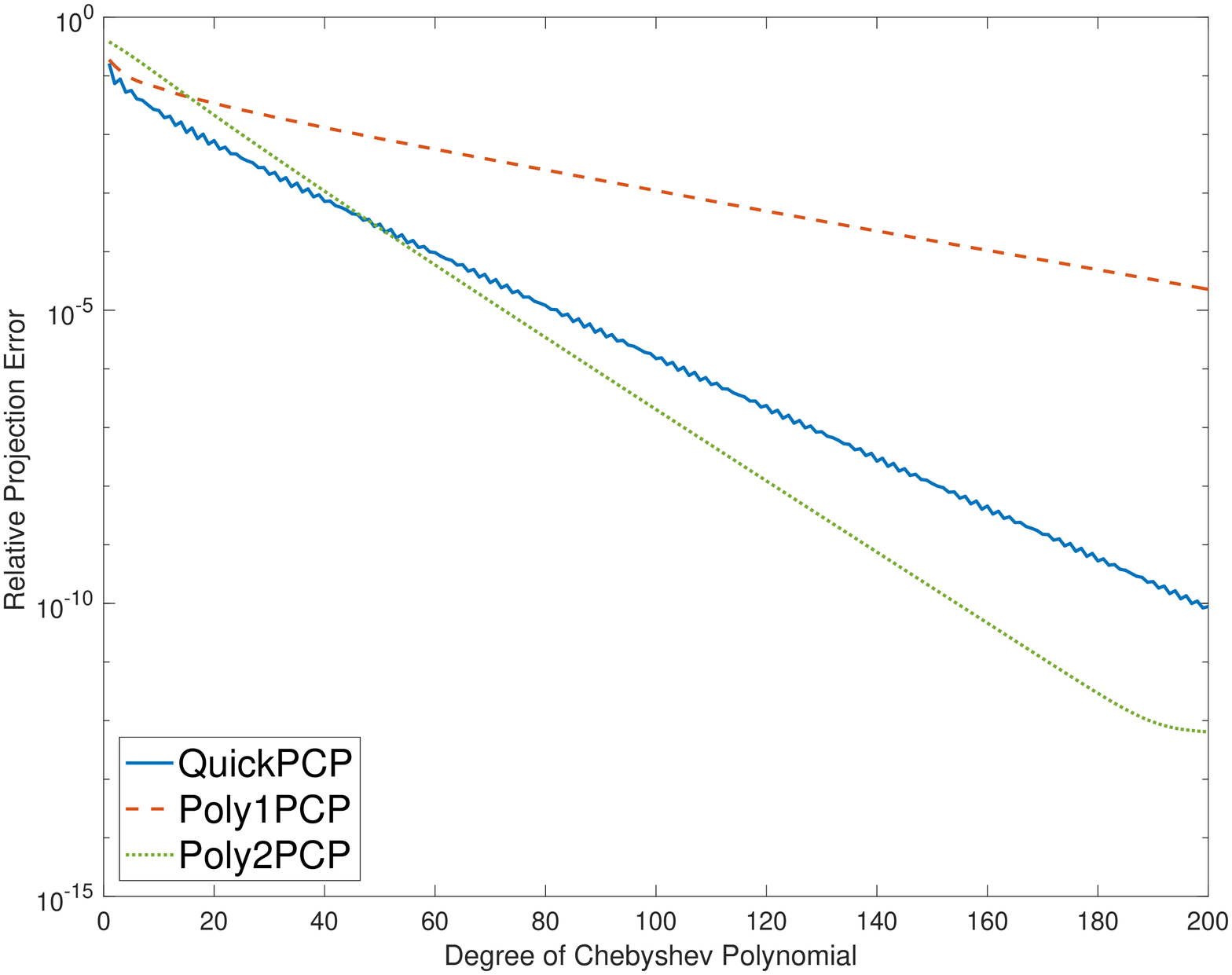}\\
(a) $\lambda=0.05$ &(b) $\lambda=0.15$\\
\includegraphics[scale=.25]{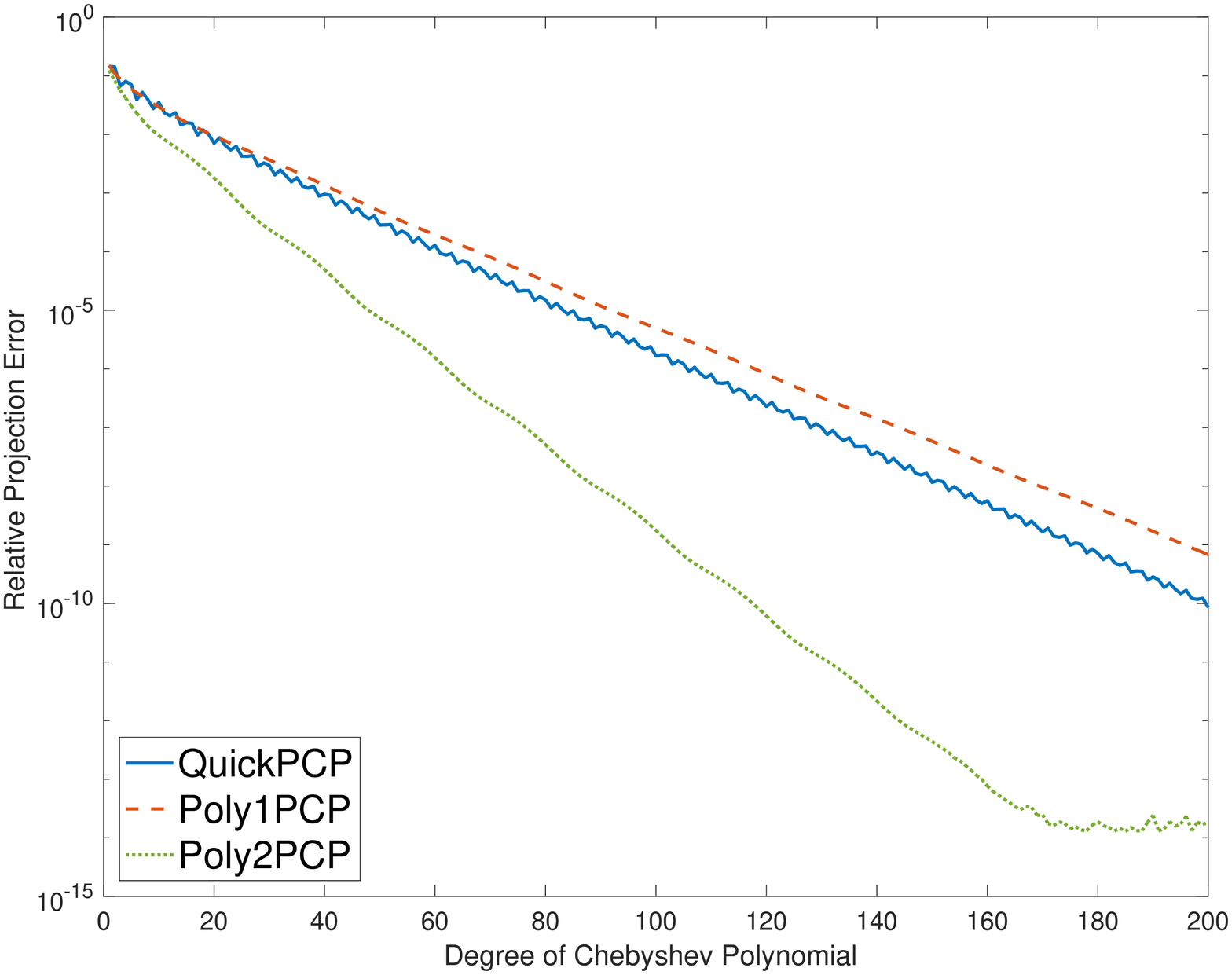} &
\includegraphics[scale=.25]{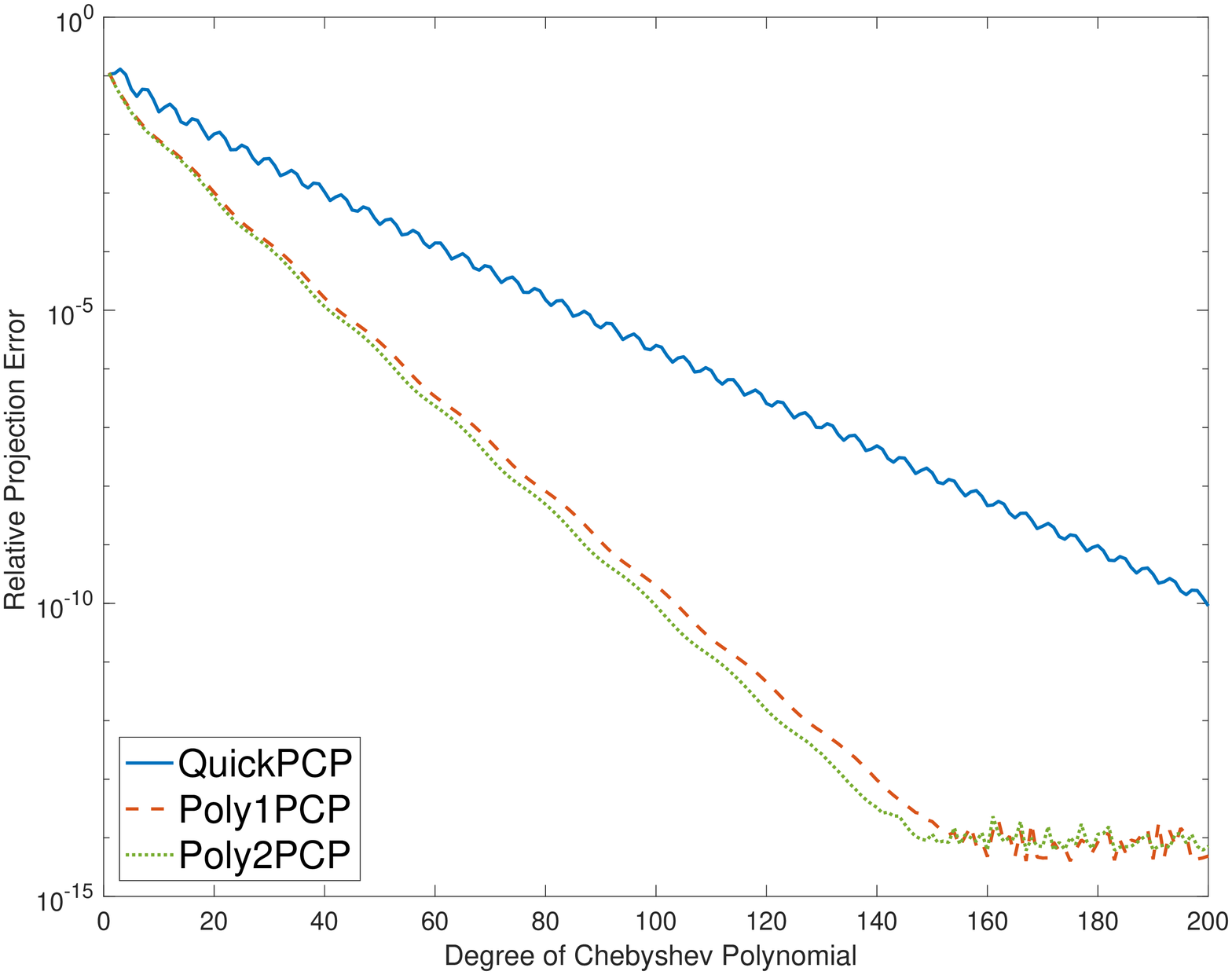} \\
 (c) $\lambda=0.3$ &(d)$\lambda=0.48$
 \end{tabular}
\caption{The relative  projection errors obtained through \texttt{QuickPCP}, \texttt{Poly1PCP}, and \texttt{Poly2PCP}, plotted against the degree of the Chebyshev approximation to the signum function.}
\label{fig:3}
\end{figure}

In our second experiment, we examine the performance of \texttt{QuickPCP}, \texttt{Poly1PCP}, and \texttt{Poly2PCP} in terms of CPU time when the relative errors of their outputs are less than $10^{-12}$. The data for this experiment was generated and timed using MATLAB 2017a. The computer that generated these times is a 2017 MacBook Pro with a 3.1 GHz Intel Core i5 Processor with 16 GB of memory. In Figure~\ref{fig5}(a), when using a randomly generated eigenvalue distribution we can see that when $\lambda \in (.05,.5)$, both \texttt{Poly1PCP}, and \texttt{Poly2PCP} outperform \texttt{QuickPCP}, with a time reduction over a factor of 10 when $\lambda \ge .175$. In this case, we see that the convergence time for \texttt{QuickPCP} does not significantly decrease as $\lambda$ increases, while \texttt{Poly1PCP} and \texttt{Poly2PCP} do. In Figure~\ref{fig5}(b), the matrix $A$ has a uniform eigenvalue distribution. Again, both \texttt{Poly1PCP}, and \texttt{Poly2PCP} outperform \texttt{QuickPCP}. We can see a convergence time reduction of close to a factor of 50 when $\lambda$ approaches $.5$. As our theoretical results predicted, when $\lambda \approx b_2$, the algorithms \texttt{Poly1PCP} and \texttt{Poly2PCP} converge in roughly the same amount of time for both the uniform and random eigenvalue distributions, with the predicted algorithm performing better when $\lambda$ is well above and below $b_2$.


\begin{figure} \centering
 \begin{tabular}{cc}
\includegraphics[scale=.27]{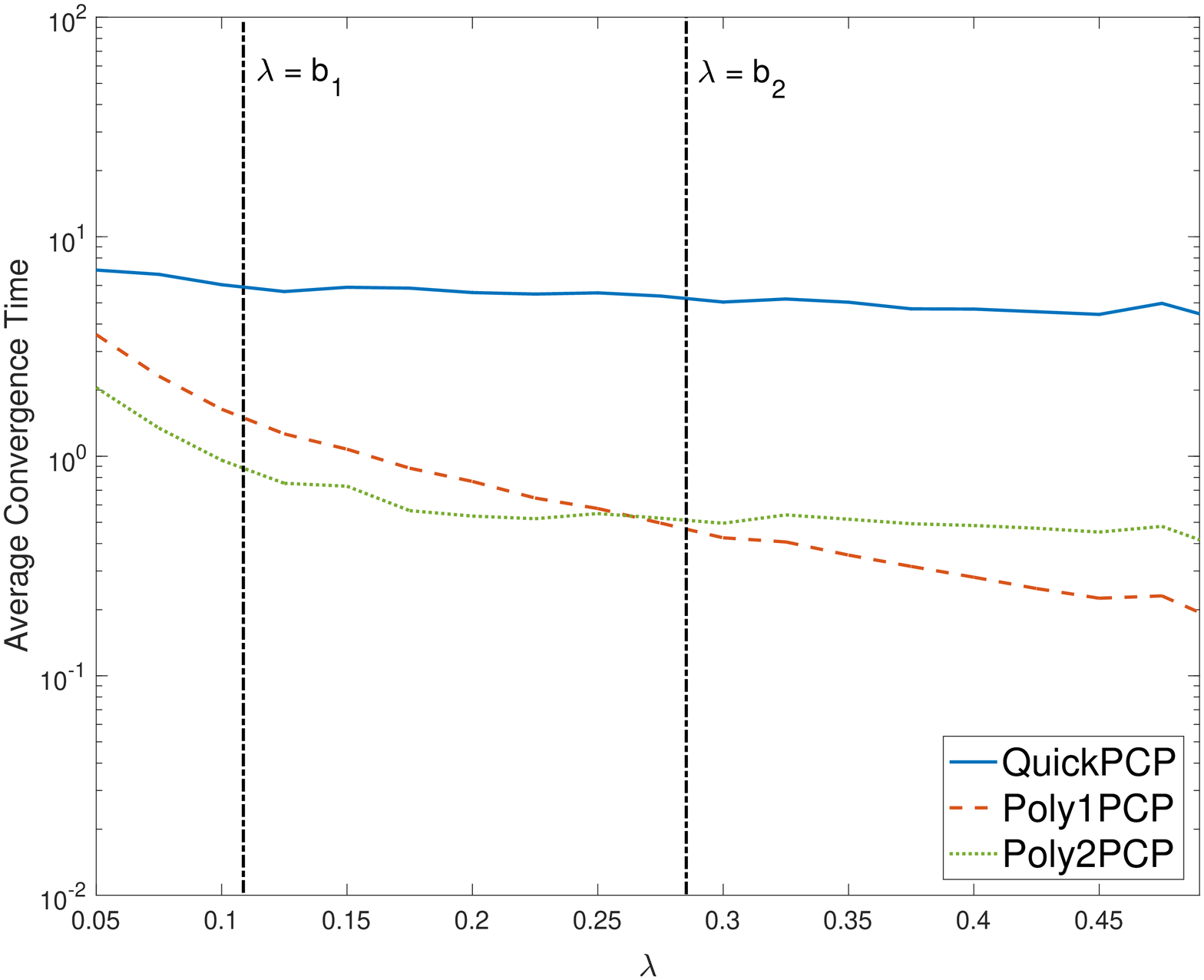} &
\includegraphics[scale=.27]{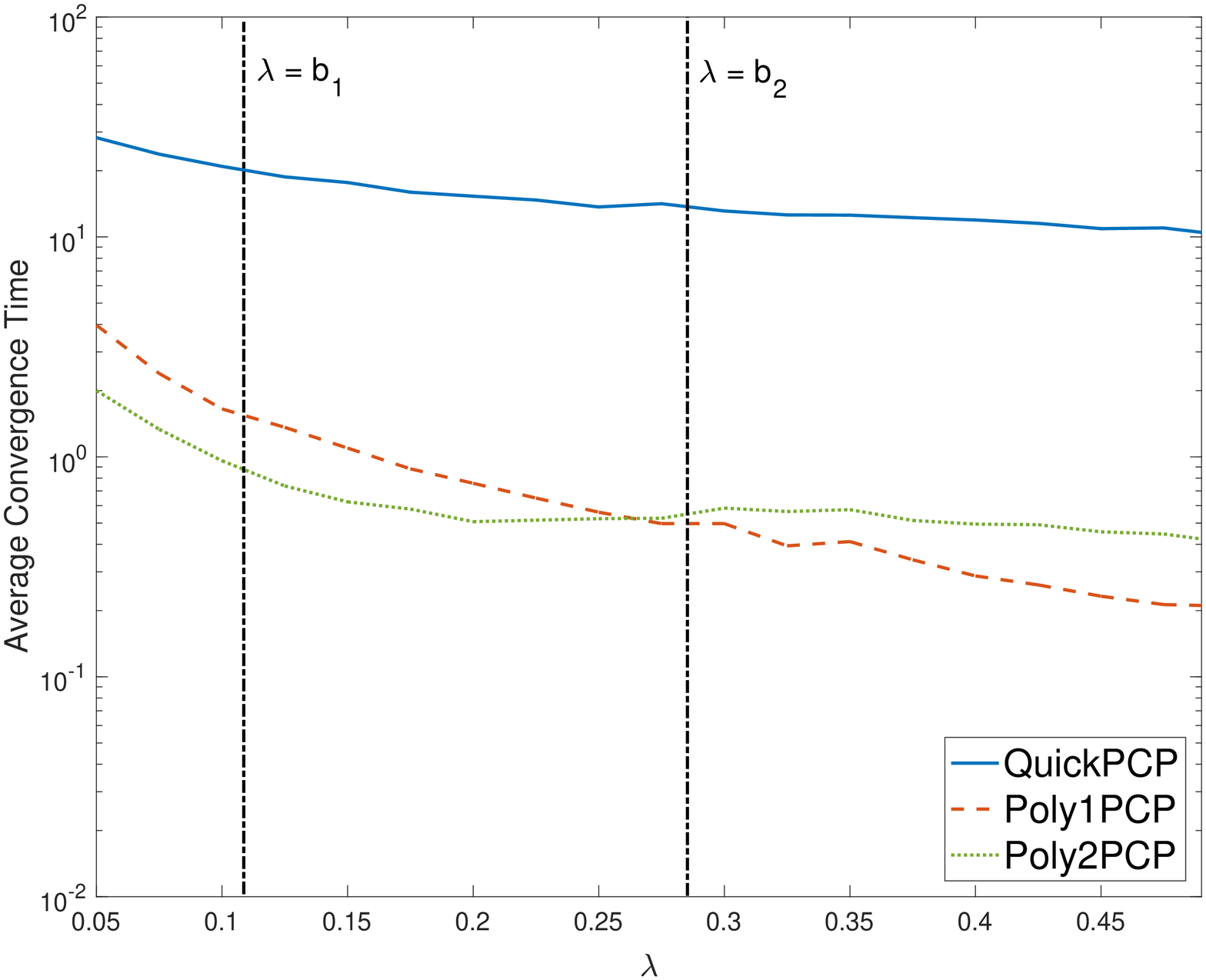}\\
(a) Random Eigenvalue Distribution & (b) Uniform Eigenvalue Distribution
\end{tabular}
\caption{The total CPU time in $\log$ seconds for  \texttt{QuickPCP}, \texttt{Poly1PCP}, and \texttt{Poly2PCP}.}
\label{fig5}
\end{figure}

\section{Conclusion}\label{sec:conclusion}
In studying approximate PCP algorithms, we have found that the replacement of a black box ridge regression subroutine in \cite{Allen-Zhu-Li:ICML:17,Frostig-Musco-Musco-Sidford:ICML:16} with optimal low degree polynomials can lead to an improvement of convergence time by well over an order of magnitude. We have shown theoretically that for a large proportion of $\lambda$ values, degree one and two polynomials reduce the required degree of the Chebyshev approximation to the signum function to guarantee approximate PCP when compared to ridge regression. Experimentally, we have confirmed these findings, and showed that these polynomials decrease convergence time on an even larger interval of $\lambda$ values. In some instances, we have shown that the overall reduction in time for convergence between our approach and the previous state of the art algorithm can even surpass a factor of 40. In our work, we have also formulated a continuous constraint optimization problem that can be solved for higher order polynomials, and these results may provide even more significant improvements on a larger range of $\lambda$ values.

\section*{Acknowledgement of Support and Disclaimer}
SDF and LS acknowledge the Government's support in the publication of this paper. This material is based upon work funded by AFRL under Contract No. FA8750-16-3-6003. Any opinions, findings, and conclusions or recommendations expressed in this material are those of the authors and do not necessarily reflect the views of AFRL.

Lixin Shen is partially supported by the US National Science Foundation under grant DMS-1522332.

\end{document}